\documentclass[11pt,reqno, fleqn]{amsart}
\usepackage{amssymb,xcolor,
amsmath,  amsthm, comment,mathrsfs,mathtools
,enumitem
, hyperref, cleveref, 
tikz-cd 
}
\def\cA{\mathcal A} \def\cB{\mathcal B}  \def\cC{\mathcal C} \def\cD{\mathcal D} \def\cE{\mathcal E} \def\cF{\mathcal F} \def\cG{\mathcal G}  \def\cI{\mathcal I}      \def\cO{\mathcal O}  \def\cP{\mathcal P}   \def\cS{\mathcal S}  \def\cT{\mathcal T}  

\def\sF{\mathscr F}

 \def\fc{\mathfrak{c}}
\def\fc{\mathfrak{C}}
  \def\fS{\mathfrak{S}}
 
\def\Z{{\mathbb Z}} \def\R{{\mathbb R}} \def\F{{\mathbb F}}  \def\Q{{\mathbb Q}}

\DeclareMathOperator{\cyc}{cyc}

\DeclareMathOperator{\Prime}{prime}

\renewcommand{\mod}[1]{\,{\rm mod}\,#1}
\renewcommand{\pmod}[1]{\,(\mathrm{mod}\,#1)}
\def\su#1{\sum_{\substack{#1}}}

\def\pr#1{\prod_{\substack{#1}}}
\def\bs#1{\begin{equation*} \begin{split} #1 \end{split} \end{equation*}}
\def\bsc#1{\begin{equation} \begin{split} #1 \end{split} \end{equation}}
\def\eqs#1{\begin{equation*} #1 \end{equation*}}
\def\eqn#1{\begin{equation} #1 \end{equation}}
\def\mult#1{\begin{multline*}#1\end{multline*}}
\def\multn#1{\begin{multline}#1\end{multline}}
\def\ar#1{\left\{ \begin{array}{l@{\quad\text{if }}l} #1 \end{array}\right.
}
\def\fl#1{\left\lfloor#1\right\rfloor} \def\ceil#1{\left\lceil#1\right\rceil}
\def\le{\leqslant} \def\ge{\geqslant}
\def\eps{{\varepsilon}}

\def\set#1{\left\{ #1 \right\}}
\def\abs#1{\left| #1 \right|}
\def\p#1{\left( #1 \right)}

\def\kronecker#1#2{\p{\frac{#1}{#2}}}
\def \ckn {\fc_{n,k}}

\theoremstyle{plain}
\newtheorem{theorem}{Theorem}[section]

\newtheorem{lemma}[theorem]{Lemma}
\newtheorem{proposition}[theorem]{Proposition}

\theoremstyle{definition}

\newtheorem{conjecture}[theorem]{Conjecture}

\theoremstyle{remark}
\newtheorem{remark}[]{Remark}

\title[Average Koblitz Conjecture in Arithmetic Progressions]{Average Twin Prime Conjecture for Elliptic Curves in Arithmetic Progressions}
\subjclass[2020]{Primary 11G05; Secondary 11N05, 11N13.}

\author[A. M. G\"ulo\u{g}lu]{Ahmet M. G\"{u}lo\u{g}lu}
\address{Ahmet M. G\"{u}lo\u{g}lu, Department of Mathematics, SA 131, Bilkent University, Ankara, Turkey}
\email{guloglua@fen.bilkent.edu.tr}

\author[A. S. Hamakiotes]{Asimina S. Hamakiotes}
\address{Asimina S. Hamakiotes, Department of Mathematics, Fordham University, Lincoln Center, New York, NY 10023}
\email{ahamakiotes@fordham.edu}

\author[S. M. Lee]{Sung Min Lee}
\address{Sung Min Lee, Department of Mathematics, Wake Forest University, Winston-Salem, NC 27109}
\email{lesum@wfu.edu}

\author[T. Wang]{Tian Wang}
\address{Tian Wang, Max Planck Institute for Mathematics, Vivatsgasse 7, 53111 Bonn, Germany}
\email{tianwmath@gmail.com}

\keywords{Twin prime conjecture, Koblitz conjecture, arithmetic progressions, reduction of elliptic curves, Barban--Davenport--Halberstam type estimates}

\begin{document}

\begin{abstract}
In 1988, Koblitz conjectured an asymptotic formula for the number of primes $p \le x$ for which the reduction of an elliptic curve over $\mathbb{Q}$ has prime order. Building on the work of Balog, Cojocaru, and David, who proved this conjecture on average in 2011
, we extend the result to primes $p$ lying in arithmetic progressions. Our asymptotic formula holds uniformly for moduli up to a fixed power of $\log x$. We also show that the resulting average constant matches the theoretical constant predicted by Lee, Mayle, and Wang in 2025 using Galois representations.

\end{abstract}


\maketitle


\section{Introduction}

Let $E/\Q$ be an elliptic curve of conductor $N_E$. For a prime $p \nmid N_E$, let $E(\F_p)$ denote the group of $\F_p$-points on the reduction of $E$ modulo $p$. In 1988, motivated by applications to elliptic curve cryptography \cite{MR1945394, 10.1007/978-3-540-24582-7_23}, Koblitz proposed a conjecture on the asymptotic distribution of primes $p$ for which $|E(\F_p)|$ is prime. In particular, he conjectured that unless $E$ is $\mathbb{Q}$-isogenous to a curve with nontrivial torsion, there exists a constant $C_E^{\Prime} > 0$ such that
\eqs{
\pi_E^{\Prime} (x) \coloneqq \#\{p \le x : p \nmid N_E, |E(\F_p)| \text{ is prime}\} \sim C_E^{\Prime} \frac{x}{\log^2 x},
}
as $x \to \infty$. We call $p$ a \textit{Koblitz prime} for $E$ if $|E(\F_p)|$ is prime. 



About two decades later, Jones discovered a counterexample to the conjecture, noting that Koblitz had failed to account for the entanglements of division fields.
Motivated by this, Zywina \cite[Section 1.1]{Zywina2011} refined the constant $C_E^{\Prime}$ to incorporate entanglements, which can vanish even if $E$ is not $\Q$-isogenuous to a curve with non-trivial $\Q$-torsion; in this case, the conjecture predicts that $E$ has only finitely many Koblitz primes.
%


While the Koblitz--Zywina conjecture remains open for any individual curve, several partial results have been established. Notably, Cojocaru \cite{MR2167436} proved a conditional upper bound of the exact order of magnitude for curves without complex multiplication, showing that $\pi_E^{\text{prime}}(x) \ll_E x/\log^2 x$ as $x \to \infty$ under a quasi-Generalized Riemann Hypothesis (quasi-GRH). Conversely, no lower bound of the correct order of magnitude is currently known; however, several conditional and unconditional results analogous to Chen's Theorem have been established, providing lower bounds for the number of primes $p$ for which $|E(\F_p)|$ is an ``almost prime" with at most a prescribed number of prime factors \cite{MR1934487, MR2140162, MR2879973}.


Since the group order $|E(\F_p)|$ is roughly of size $p$ according to the Hasse--Weil bound, the problem is widely recognized as an elliptic curve analogue of the Hardy--Littlewood twin prime conjecture \cite{HardyLittlewood1923, Koblitz}. This analogy provided the foundation for the breakthrough work of Balog, Cojocaru, and David \cite{BCD} in 2011, where they proved  unconditionally that Koblitz's conjecture holds on average over the two-parameter family $\sF$ of elliptic curves given by
\begin{equation}
\label{eq:family}
	\sF = \sF(A,B) \coloneqq \{ E_{a,b} : a,b\in \mathbb{Z},|a| \le A, |b| \le B, \Delta_{E_{a,b}} \neq 0 \}, 
\end{equation}
where $E_{a,b}$ is given by $Y^2 = X^3 + aX + b$ and $A,B > 0$. 

\begin{theorem}[Balog--Cojocaru--David, 2011]\label{thm:BCD}
    Let $x>0$ be a variable and let $\varepsilon >  0$. Let $A=A(x), B=B(x)$ be parameters such that $A,B>x^{\varepsilon}$ and $AB > x\log^{10}x$. 
    Then, 
\eqs{
\frac{1}{|\mathcal{\sF}|} \sum_{E\in \mathcal{\sF}}\pi_E^{\Prime}(x) = C^{\Prime} \frac{x}{\log^2x}+O\left(\frac{x}{\log^3x}\right)  \quad \text{as } x \to \infty,
} 
where $C^{\Prime}$ is given by 
\begin{align*}
    C^{\Prime}  = \prod_{q}\left(1 - \frac{q^2-q-1}{(q-1)^3(q+1)}\right) \approx 0.50517.
\end{align*}
\end{theorem}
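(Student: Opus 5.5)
We follow the averaging strategy of Balog--Cojocaru--David. First we swap the order of summation:
\eqs{
\frac{1}{|\sF|}\sum_{E\in\sF}\pi_E^{\Prime}(x)=\sum_{p\le x}\frac{1}{|\sF|}\#\{E\in\sF : p\nmid N_E,\ |E(\F_p)| \text{ prime}\}.
}
For fixed $p$ we use that $|E(\F_p)|=p+1-a$ with $|a|<2\sqrt p$ (Hasse--Weil). We group curves according to their reduction modulo $p$. Because $A,B>x^\eps$ and $AB>x\log^{10}x$, the pairs $(a,b)$ in the box are equidistributed among the pairs $(a,b) \bmod p$. To see this, count lattice points in the box in each residue class; the error is $O(A+B)/p$-type terms, plus an averaged error handled by character sums or the large sieve. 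This gives
\eqs{
\frac{1}{|\sF|}\#\{E\in\sF: |E(\F_p)|=m\}\approx \frac{1}{p^2}\#\{(a,b)\in\F_p^2: |E_{a,b}(\F_p)|=m\}.
}
By Deuring's theorem, the right-hand side is essentially $\frac{1}{p}H(t^2-4p)$ for $m=p+1-t$, where $H$ is the Kronecker class number. The main term therefore becomes
\eqs{
\sum_{p\le x}\frac{1}{p}\sum_{\substack{|t|<2\sqrt p\\ p+1-t\ \text{prime}}} H(t^2-4p).
}

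Next we reorganize by the prime $\ell=p+1-t$ and use the class number formula. Write $H(t^2-4p)$ as $\sqrt{4p-t^2}\,L(1,\chi_d)$ times a finite Euler factor, with $t^2-4p=df^2$. We replace $L(1,\chi)$ by a short truncated sum over $n\le U$. The resulting expression averages over pairs $(p,\ell)$ of primes with $|p+1-\ell|<2\sqrt p$, weighted by local conditions modulo $n$ and $f^2$. After summing over $t$ in short intervals, this reduces to counting primes $p$ such that $p+1-t$ is also prime, with congruence conditions on $p$ and $t$. We evaluate this on average over $t$ using Bombieri--Vinogradov or Barban--Davenport--Halberstam for primes in short intervals and progressions. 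The key input is that averaging over the shift $t$ turns the twin-prime-like problem into a problem about primes in arithmetic progressions, summed over moduli up to a power of $\log x$ and over the shifts.

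Finally, we assemble the local factors. For each prime $q$, the contributions of $\chi_d(q)$ and the $f$-sum combine into the density of pairs $(p,\ell)$ modulo $q$ for which $\ell=|E(\F_p)|$ with $q\nmid\ell$. This density is the proportion of $\mathrm{GL}_2(\F_q)$ with $\det\ne0$ and $\det-\mathrm{tr}+1\ne0$, normalized by the probabilities $(1-1/q)^{2}$ for independent primes. That proportion equals $1-\frac{q^2-q-1}{(q-1)^3(q+1)}$. The interval integral gives $\int_2^x \frac{dt}{\log^2 t}\sim x/\log^2x$.

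The main obstacle is the error analysis. We must control the truncation of $L(1,\chi_d)$ uniformly, since possible Siegel zeros are handled by averaging over $d$ via the large sieve for real characters. We must also bound the error terms from short-interval prime counts to within $x/\log^3x$. We would first try handling the $L$-function tail with the Elliott-type bound that $L(1,\chi_d)$ is well approximated for all but few $d$, and bound the exceptional moduli trivially.
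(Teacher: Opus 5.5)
The first half of your outline is the route the paper follows (Lemma \ref{lem:average_estimate}, Lemma \ref{lem:first-translate}): swap sums, reduce to local counts, apply Deuring's theorem, use the class number formula, and truncate $L(1,\chi_d)$. The step that carries all the difficulty is then not supplied, and the mechanism you describe for it would not work.

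\textbf{The main gap: the prime-pair step.} Truncating $L(1,\chi_d)$ is not a Siegel-zero issue. P\'olya--Vinogradov bounds the tail by $O(\sqrt{|d|}\log|d|/U)$ uniformly in $d$. Since $|d|$ is as large as $4x$, this forces $U\approx x^{1/2}(\log x)^M$. The resulting congruences $p\equiv (r^2-af^2)/4 \pmod{mf^2}$ therefore have moduli up to about $x^{1/2}(\log x)^{3M}$, not a power of $\log x$; your own truncation step contradicts that claim.

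For such moduli, Bombieri--Vinogradov or Barban--Davenport--Halberstam for single primes (even in short intervals) cannot evaluate $\sum_t w(t)\,\#\{p\in\cI_l: p\equiv b(t) \bmod q,\ p+1-t \text{ prime}\}$. Rewriting it as a sum over pairs $(p,p')$ with $|p+1-p'|\le 2\sqrt{x}$ requires primes in windows of length $\asymp\sqrt x$ in residue classes modulo $q\ge\sqrt x$. Each such class has $O(1)$ expected members, and the congruence couples $p$ with $p'$.

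What is actually needed is a Barban--Davenport--Halberstam theorem for prime pairs. This is a mean-square bound for $\Psi(X,Y;r,q,a)-Y\mathfrak S(r,q,a)$ summed over shifts $0<|r|\le R$ with $R\ge x^{1/3+\eps}$, over $q\le x/(\log x)^N$, and over all $a \bmod q$. This is \cite[Theorem~4]{BCD}, used here in Proposition \ref{prop:error-term-varE} after Cauchy--Schwarz and the injectivity of $a\mapsto b$. It is the genuinely new input of Balog--Cojocaru--David, and your proposal has no substitute for it.

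Your fallback of shortening the truncation via Elliott-type or zero-density approximations is not carried out. It is also unclear how it would keep the moduli below a power of $\log x$, since expanding a short Euler product for $L(1,\chi_d)$ produces smooth $m$ of unbounded size.

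\textbf{Two smaller points.}
\begin{itemize}
\item The equidistribution in your first step is not among pairs $(a,b)\bmod p$. The box may contain far fewer than $p^2$ points. It is equidistribution among $\F_p$-isomorphism classes, i.e.\ orbits $\{(su^4,tu^6)\}$ of size about $p/2$. This is where the characters with $\chi_1^4\chi_2^6=\chi_0$, Gallagher's large sieve, the Friedlander--Iwaniec bound and the condition $AB>x\log^{10}x$ enter. Your approximate identity is still correct on average over $p$.
\item The constant cannot simply be read off from a Chebotarev heuristic. It must be extracted from $\sum_{m,f}\frac{1}{mf}\sum_a\bigl(\frac am\bigr)\mathfrak S(r-1,mf^2,b)$. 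This means using the multiplicativity of $c_f^r$, completing the $m$ and $f$ sums via Rankin's trick, and then averaging the $r$-dependent Euler factors over $|r|\le R$.
\end{itemize}

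Your heuristic is also mis-normalized. Dividing the $\mathrm{GL}_2(\F_q)$-proportion $1-\frac{q^2-2}{(q-1)^2(q+1)}$ by $(1-1/q)^2$ gives $81/64$ at $q=3$. The correct Koblitz factor, $27/32$, comes from dividing by $1-1/q$ only, because $\det g\equiv p$ is automatically a unit.
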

Furthermore, Jones \cite{MR2534114} showed that the constants $C_E^{\Prime}$, when averaged over $\sF$, agrees with $C^{\Prime}$. This agreement provides further evidence for  Zywina's refinement of the conjecture. 

More recently, following Zywina's approach, the third and fourth authors together with Mayle proposed \cite[Conjecture 1.6]{Lee_Mayle_Wang_2025} an arithmetic progression analogue of the Koblitz--Zywina conjecture, which we state below. 
For coprime positive integers $n$ and $k$, we define
$$\pi_E^{\Prime}(x;n,k) \coloneqq \#\{p\le x: p \nmid N_E, p \equiv k \pmod n, |E(\F_p)|\text{ is a prime}\}.$$ 

\begin{conjecture}[Lee--Mayle--Wang, 2025] \label{conj:LMW}
If $E/\mathbb{Q}$ is an elliptic curve, then there exists a constant $C_{E,n,k}^{\Prime}\ge 0$, defined in \cite[(35)]{Lee_Mayle_Wang_2025}, for which 
\eqs{
   \pi_E^{\Prime}(x;n,k) \sim C_{E,n,k}^{\Prime} \frac{x}{\log^2 x} \quad \text{as } x \to \infty.
}
\end{conjecture}
In \cite[Corollary 1.10]{Lee_Mayle_Wang_2025}, the authors compute the average of the constants $C^{\Prime}_{E,n,k}$ over the family $\sF$ defined in \eqref{eq:family} and show that
\eqs{
\frac 1 {|\sF|} \sum_{E \in \sF(A, B)} C_{E,n,k}^{\Prime} \rightarrow C_{n,k}^{\Prime},
}
where $A=A(x)$ and $B=B(x)$ are appropriately chosen functions that tend to infinity as $x\to \infty$, and 
\bsc{\label{eq:koblitzconstant}
C_{n, k}^{\Prime} =\frac {1}{\varphi(n)} 
&\pr{q \mid (n,k-1)} \left( 1 - \frac{1}{(q-1)^2 (q+1)} \right) \\
&\cdot \pr{q \mid n\\ q \nmid k-1} \left( 1 - \frac{1}{(q-1)^2} \right) 
\pr{q \nmid n} \Bigl(1 - \frac{q^2 - q - 1}{(q - 1)^3 (q + 1)} \Bigr)
. 
}
In this case, the average constant $C^{\Prime}_{n,k}$ was obtained by analyzing the Galois representations of generic elliptic curves. 



In this paper, following the approach of Balog--Cojocaru--David, we prove that Conjecture~\ref{conj:LMW} holds on average and show that $C^{\Prime}_{n,k}$ indeed reflects the average distribution of Koblitz primes in arithmetic progressions.
\begin{theorem}\label{thm:averagekoblitz}
Suppose that $1 \le k \le n$ with $(k,n)=1$. Let $C, \eps > 0$. Then there exists an $x_0 = x_0 (C,\eps)>0$ such that for $x>x_0$,
\eqs{
	\frac{1}{|\sF|} \su{E \in \sF(A,B)} \pi_E^{\Prime}(x; k, n) = C_{n, k}^{\Prime} \frac{x}{\log^2 x} + O\left( \frac{x}{\varphi(n) \log^3 x}\right),
}
with $C_{n,k}^{\Prime}$ given by \eqref{eq:koblitzconstant}, holds for any $n\ll (\log x)^C$, provided that $A, B > x^\eps$ and  $AB > x\log^{10} x$. 
\end{theorem}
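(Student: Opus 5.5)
Following Balog--Cojocaru--David, I would first swap the order of summation. For a fixed prime $p \ge 5$ and an integer $m$ with $|m - (p+1)| < 2\sqrt p$, the number of $(a,b)\in\F_p^2$ with $|E_{a,b}(\F_p)| = m$ is, by Deuring's theorem, $\frac{p-1}{2} H(t^2-4p)$ with $t = p+1-m$. Here $H$ is the Kronecker--Hurwitz class number, weighted as in \cite{BCD}. Counting $(a,b)$ in the box $|a|\le A,|b|\le B$ by residue classes mod $p$ gives $\frac{4AB}{p^2}\cdot\frac{p-1}{2}H(t^2-4p)$, with an error controlled by the large-sieve/character-sum estimates of \cite{BCD}. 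Those estimates are where the hypotheses $A,B>x^\eps$ and $AB>x\log^{10}x$ enter. The congruence $p\equiv k \pmod n$ only restricts the outer sum, so this part of their argument carries over verbatim. The result is
\eqs{
\frac{1}{|\sF|}\sum_{E\in\sF}\pi_E^{\Prime}(x;n,k) = \su{p\le x\\ p\equiv k \pmod n}\ \su{\ell \text{ prime}\\ |\ell-p-1|<2\sqrt p} \frac{H((p+1-\ell)^2-4p)}{2p} + O\Big(\frac{x}{\log^{A'}x}\Big).
}

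Next I would write $H(t^2-4p)$ as $\frac{\sqrt{4p-t^2}}{\pi}L(1,\chi_d)$ summed over the squares $f^2$ dividing the discriminant. I would expand $L(1,\chi)$ as a short truncated Dirichlet series $\sum_{m\le U}\chi(m)/m$; the tail is handled on average by the BCD large-sieve bound. This reduces the problem to counting prime pairs $(p,\ell)$ with $p\equiv k\pmod n$, $\ell = p+1-t$, and $t^2-4p$ in prescribed classes modulo $m f^2$. After the change of variables $(p,t)\mapsto(p,\ell)$, the congruence on $t^2-4p$ modulo $q=\operatorname{lcm}(mf^2,n)$ becomes a condition on $(p,\ell)$ modulo $q$, and the number of admissible pairs of classes can be written down explicitly. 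The sum over $p,\ell$ in these classes, weighted by $\sqrt{4p-(p+1-\ell)^2}$ over the short interval $|\ell - p-1|<2\sqrt p$, is then evaluated with Barban--Davenport--Halberstam in the short-interval form BCD use, applied now to moduli $q$ that include the extra factor $n$. Since $n\ll(\log x)^C$ and $m,f$ range up to small powers of $x$, the moduli stay in the range where the averaged BDH estimate applies, and the main term picks up the factor $1/\varphi(n)$ from $p\equiv k\pmod n$.

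The main term is then an Euler product. By multiplicativity, the local factors at primes $q\nmid n$ are exactly those of $C^{\Prime}$. At primes $q\mid n$, the condition $p\equiv k\pmod q$ changes the local density of $t^2-4p$ and of $\ell$ being a unit mod $q$. I would compute these local densities separately in the cases $q\mid k-1$ and $q\nmid k-1$; this split is natural because when $k\equiv1\pmod q$ we have $\ell\equiv 2-t\pmod q$, and the requirement $\ell\not\equiv0$ interacts differently with the class-number weights. These computations should produce the factors $1-\frac{1}{(q-1)^2(q+1)}$ and $1-\frac1{(q-1)^2}$. One point needs care: in BCD the local factor at $q=2$ involves the $2$-adic structure of $H$. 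For $n$ even I would therefore do the $2$-adic computation by hand, counting $(a,b)$ mod $2^j$ with $\#E\equiv$ a unit and $p$ fixed mod $2^j$. That count equals the Galois-theoretic density of \cite{Lee_Mayle_Wang_2025}, as it must.

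I expect the main obstacle to be uniformity in $n$ in the error terms. BCD's BDH input already loses logarithms, and I need the final error to be $O(x/(\varphi(n)\log^3x))$, saving $\varphi(n)$ and not just $O(x/\log^3x)$. My first approach would be to apply BDH with moduli $q\le x/\log^{B}x$ and absorb $n$ into the modulus. The contribution of non-principal characters mod $n$ is then bounded on average, and Siegel--Walfisz handles $n\le(\log x)^C$ individually, at the cost of the ineffective $x_0(C,\eps)$. The extra $\varphi(n)$ saving should be accessible by keeping $1/\varphi(q)$ explicit throughout and taking the truncation $U$ and the power $A'$ large in terms of $C$.
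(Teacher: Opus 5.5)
Your outline follows essentially the paper's route, which itself follows \cite{BCD}: the reduction to $\sum_{p\le x,\,p\equiv k\pmod{n}}\nu(p;\cP)/(p-1)^2$ (Lemma~\ref{lem:average_estimate}), Deuring's theorem with the $L(1,\chi_d)$ expansion of the Hurwitz class number, prime pairs in classes modulo $[mf^2,n]$ controlled by BCD's Theorem~4 with $n$ absorbed into the modulus (Proposition~\ref{prop:error-term-varE}), and an average over the trace in classes modulo $n$ whose local factors $\cS(p^v;k)$ depend only on whether $p\mid k-1$, precisely because $\ell\not\equiv 0$ kills the root $t\equiv 2$ of $t^2\equiv 4k$ when $k\equiv 1\pmod{p}$. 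The one thing to fix is your reliance on a large $A'$ in the reduction step: there the Friedlander--Iwaniec fourth-moment term contributes $\frac{1}{|\sF|}\sum_{p\le x,\,p\equiv k\pmod{n}}(ABp)^{1/2}\log^3 p$, so $AB>x\log^{10}x$ gives only a $\log^3 x$ saving. The required $1/\varphi(n)$ must therefore come from keeping the restriction $p\equiv k\pmod{n}$ in that outer prime sum (Brun--Titchmarsh), as in Appendix~\ref{app:lem-proof}, rather than from taking $A'$ large.
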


Proving this result presents two main challenges. First, we must track the dependence of the modulus $n$ on $x$ and determine all moduli for which the asymptotic formula holds uniformly. Second, the average constant $\ckn$ initially found using analytic methods is complex, and
we must demonstrate that it decomposes into the conjectured average constant $C_{n, k}^{\Prime}$ given in \eqref{eq:koblitzconstant}.



A standard step in proving Theorem \ref{thm:averagekoblitz} is to 
reduce the global average counting problem to evaluating a corresponding average asymptotic for the local counting functions 
\eqn{\label{nup}
	\nu(p; \cP) = \#\{ E_{s,t} : s,t \in \F_p, \ p\nmid \Delta_{s,t}, \ |E_{s,t}(\F_p)| \in \cP  \}, }
where $\cP$ is the set of primes. 
Consequently, a substantial part of this paper is devoted to establishing the following average result for these local functions: 

\begin{theorem}
\label{thm:local}
Given any $C > 0$, there exists $x_0(C) > 0$ such that for $x > x_0 (C)$, 
\eqn{\label{ALC} 
\su{p \le x\\ p \equiv k \mod n} \nu(p; \cP) = \frac{C_{n,k}^{\Prime}} 3 \frac{x^3}{\log^2 x} + O \Bigl(\frac{x^3}{\varphi(n) \log^3 x}\Bigr)  
}
holds whenever $n \ll (\log x)^C$. 
\end{theorem}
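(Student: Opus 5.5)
We follow the Balog--Cojocaru--David strategy and express $\nu(p;\cP)$ through class numbers. By Deuring's theorem, the number of $\F_p$-isomorphism classes of elliptic curves with $|E(\F_p)| = p+1-a$ is the Kronecker class number $H(a^2-4p)$, and each class has about $(p-1)/2$ models $E_{s,t}$. Hence, up to negligible corrections from $j=0,1728$,
\eqs{
\nu(p;\cP) = \frac{p-1}{2}\su{|a|<2\sqrt p\\ p+1-a=\ell \text{ prime}} H(a^2-4p) + O(p).
}
Writing $\ell = p+1-a$, the left side of \eqref{ALC} becomes $\tfrac12\sum (p-1) H((p+1-\ell)^2-4p)$, summed over pairs of primes $p\le x$, $p\equiv k \pmod n$, and primes $\ell$ with $|\ell-p-1|<2\sqrt p$. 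Since $(p+1-\ell)^2-4p = d$ with $d<0$, we expand $H(d)$ via the class number formula as $\frac{\sqrt{|d|}}{2\pi}\sum_{f^2\mid d} \frac1f L(1,\chi_{d/f^2})$, truncate the $L$-series at $U = x^{\delta}$ (controlling the tail on average over the discriminants, as in BCD, using a large-sieve bound for character sums), and truncate the sum over $f$.

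The main term then reduces to sums of the form
\eqs{
\su{p\le x\\ p\equiv k \pmod n}\ \su{\ell \text{ prime}\\ |\ell-p-1|<2\sqrt p\\ \ell\equiv m \pmod{4mf^2 }} \sqrt{4p-(p+1-\ell)^2},
}
i.e. counts of prime pairs $(p,\ell)$ in short intervals and joint congruence classes modulo $n$ and modulo $q=4mf^2\le x^{2\delta}$. We replace the prime counts in progressions by their expected values (weights $1/\varphi(n)$ and $1/\varphi(q)$, with a correction for the joint modulus $\mathrm{lcm}$), with error controlled by a Barban--Davenport--Halberstam type estimate for primes in short intervals, averaged over moduli $q\le x^{2\delta}$ and intervals of length $\sqrt p$. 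Uniformity in $n\ll(\log x)^C$ comes from Siegel--Walfisz for the modulus-$n$ condition on $p$, combined with the averaged estimate for $\ell$. Summing the Gaussian-type weight $\sqrt{4p-(p+1-\ell)^2}$ over $\ell$ gives $\tfrac{\pi}{2}\cdot p/\log p$, and summing over $p$ then produces $\tfrac{1}{3\varphi(n)}\, x^3/\log^2 x$ times an Euler-product-like constant $\ckn$.

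The final step, which I expect to be the main obstacle, is identifying $\ckn$ with $C^{\Prime}_{n,k}$. The constant arises as
\eqs{
\sum_{f,m} \frac{1}{fm}\cdot\frac{1}{\varphi(\cdot)}\sum_{a \bmod 4mf^2}\kronecker{d(a)/f^2}{m}\cdots,
}
with the condition $p\equiv k \pmod n$ coupled to the congruences modulo $q$. Multiplicativity lets us factor this into local factors at each prime $q$. For $q\nmid n$ the factor is BCD's $1-\frac{q^2-q-1}{(q-1)^3(q+1)}$. For $q\mid n$ we must recompute the local density with $p$ fixed modulo $q^{v_q(n)}$. Concretely, the factor counts matrices in $\mathrm{GL}_2(\Z_q)$ with determinant $\equiv k$ and $\det-\mathrm{tr}+1\not\equiv 0 \pmod q$. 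This yields $1-\frac{1}{(q-1)^2(q+1)}$ when $q\mid k-1$ and $1-\frac1{(q-1)^2}$ otherwise, after normalization by $\varphi(n)$. I would verify this local computation by directly counting solutions of $\ell\equiv p+1-a$ with $\ell\not\equiv 0$ among traces weighted by the local class-number densities, handling $q=2$ separately. The error terms, each $O(x^3/(\varphi(n)\log^3 x))$, then complete the proof.
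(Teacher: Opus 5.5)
Your outline takes a genuinely different route from the paper in its analytic core, and in broad strokes the route is workable.

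\textbf{How the two routes differ.} The paper truncates $L(1,\chi_d)$ at $U=x^{1/2}L$ by P\'olya--Vinogradov and cuts $p$ into the long intervals $\cI_l$. It then fixes the trace $r$, so the inner sum counts prime pairs $p,\ p+1-r$ with fixed difference and $p\equiv b \pmod{[mf^2,n]}$. This count is compared with $Y\fS(r-1,[mf^2,n],b)$, and the error is controlled only on average over $|r|\le R$, $R\ge x^{1/3+\eps}$, by the Balog--Cojocaru--David prime-pair Barban--Davenport--Halberstam theorem (Proposition~\ref{prop:error-term-varE}).

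You instead keep $p$ outside and count $\ell=p+1-r$ in the Hasse interval of length $4\sqrt p$. Your analytic input is therefore equidistribution of primes in almost all short intervals in progressions, not a twin-prime estimate averaged over shifts. This is coherent, and it has three consequences:
\begin{itemize}
\item It forces your small truncation point $U=x^\delta$ with a large sieve for real characters, since the moduli must stay below $\sqrt x$.
\item It never meets $\fS$.
\item Once the $\ell$-count is replaced by its expectation, the main term is a purely local average over $(p,r)$ modulo $[n,4mf^2]$. That average factors by the Chinese remainder theorem, without the expansion over primes dividing $r(r-1)(r-2)$ used in Section~\ref{sec:asym4Flr}.
\end{itemize}
In exchange, the paper quotes an existing theorem and tolerates moduli up to $nUL^2\le x/(\log x)^N$.

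Your predicted local factors are correct. For odd $q$, the proportion of $g\in\mathrm{GL}_2(\F_q)$ with $\det g=k$ and no eigenvalue $1$ is $\frac{q-2}{q-1}$ if $k\not\equiv1$ and $\frac{q^2-q-1}{q^2-1}$ if $k\equiv1 \pmod q$. These are exactly $\cS(q^v;k)/q^v$ from Propositions~\ref{prop:nonresidueSpn} and~\ref{prop:Sp-formula}, and multiplying by $q/(q-1)$ gives the factors of $C^{\Prime}_{n,k}$.

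\textbf{What is still asserted rather than proved.} The two load-bearing steps are stated, not supplied.

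First, the analytic input. You need a precise mean-square estimate for primes in $(y,y+h]$, $h\asymp\sqrt y$, over all classes modulo $q\le x^{2\delta}$ and on average over $y\le x$, with an arbitrary log-power saving. This is obtainable from zero-density estimates for small $\delta$, but it must be stated as a theorem. Moreover, Siegel--Walfisz modulo $n$ does not suffice for the $p$-sum. The symbol $\chi_{(r^2-4p)/f^2}(m)$ and the condition $(\ell,4mf^2)=1$ couple $p$ and $\ell$ modulo $4mf^2$; your condition ``$\ell\equiv m \pmod{4mf^2}$'' is not the right one. After averaging out $\ell$, the summand therefore depends on $p$ modulo $[n,4mf^2]$, and you need a Bombieri--Vinogradov or Barban--Davenport--Halberstam step for $p$ over these moduli.

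Second, and more substantially, equating the analytic constant with a $\mathrm{GL}_2$ count is exactly the work of Section~\ref{sec:fullEuler-product}, and you give no mechanism for it. The missing bridge is Gekeler's theorem: the local factors of $\frac{\sqrt{4p-r^2}}{\pi}\sum_f f^{-1}L(1,\chi_{(r^2-4p)/f^2})$ are, after normalization, the densities $\lim_j q^{-2j}\#\{g\in M_2(\Z/q^j\Z): \mathrm{tr}\,g\equiv r,\ \det g\equiv p\}$. With this, your counting argument goes through. Without it, or without a direct evaluation of the local sums over $m=q^i$, $f=q^j$ as the paper does through $Q_r(q)$ and $\cS(p^v;k)$, the value of the constant is only claimed.

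\textbf{Normalization slips.} $\sum_{|r|<2\sqrt p}\sqrt{4p-r^2}\approx 2\pi p$, so the $\ell$-sum is about $2\pi p/\log p$, not $\frac{\pi}{2}p/\log p$. Also, with $\nu(p;\cP)=\frac{p-1}{2}\sum H(r^2-4p)$ one has $H(d)=\frac{\sqrt{|d|}}{\pi}\sum_f\cdots$, not $\frac{\sqrt{|d|}}{2\pi}\sum_f\cdots$.
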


The counting problem above is also related to the arithmetic function
\[
M_E(N;n,k)
:=
\#\{ p \text{ prime}:p\equiv k\mod n, |E(\F_p)|=N \},
\]
which is an arithmetic progression analogue of the function
\[
M_E(N)
:=
\#\{p \text{ prime}: |E(\F_p)|=N\}
\]
introduced by Kowalski \cite{MR2226355}. 
By the Hasse-Weil bound, we see that the asymptotic formula for the sum of $M_E(N;n,k)$ over prime values $N$ is related to the counting function $\pi_E^{\Prime}(x;n,k)$ for the Koblitz conjecture in arithmetic progressions (see also \cite[Conjecture~1.7, p.~818]{MR3248986}).
Thus, our result may also be interpreted as an average asymptotic result for the sum of $M_E(N;n,k)$ over prime values of $N$.
It would be interesting to study the arithmetic properties and average behavior of $M_E(N;n,k)$ in analogy with the results for $M_E(N)$ in e.g., \cite{MR3020306} and \cite{MR3248986}. We leave this as a future project.


\subsection{Koblitz conjecture and related problems}
In this subsection, we explain how the Koblitz conjecture and related problems arise from classical number theoretic questions. 
One of the longstanding problems in number theory is Artin's primitive root conjecture, which asks for the asymptotic behavior of the counting function
$$\pi_a(x)\coloneqq \#\{p \le x : \langle a \pmod p\rangle = \F_p^\times \},$$
given that $a$ is neither a square nor $-1$. An elliptic curve analogue was first considered by Lang and Trotter in \cite{MR427273}. Let $E/\Q$ have positive rank and let $P \in E(\Q)$ be a point of infinite order. One may ask for the asymptotic behavior of
$$\pi_{E,P}(x) \coloneqq \#\{p \le x : \langle P \pmod p \rangle = E(\F_p)\}.$$
There has been partial progress on this problem \cite{CM_1986__58_1_13_0}, but it remains open in general even under the GRH.

In order for $P \pmod p$ to generate $E(\F_p)$, the group $E(\F_p)$ must be cyclic. This leads naturally to the easier question of determining how often $E(\F_p)$ is cyclic. Often referred to as the cyclicity problem, this question was first studied by Serre \cite[pp.465--466]{MR3223094}, who determined the asymptotic under the GRH. Since then, substantial progress has been made by several authors; see, for example, \cite{MR0698163, MR1975393, MR2099195}. In contrast, the corresponding classical question is trivial, since $\F_p^\times$ is cyclic for every prime $p$.

The cyclicity problem has also been studied on average over families of elliptic curves. Banks and Shparlinski \cite{MR2570668} proved that there exists a constant $C^{\cyc}>0$ such that
$$\frac{1}{|\sF|}\sum_{E\in \sF} \pi_E^{\cyc}(x) \sim C^{\cyc}  \frac{x}{\log x},$$
as $x\to \infty$, upon appropriate choices of $A,B$ with respect to $x$. From a complementary perspective, Jones \cite{MR2534114} confirmed that the average of the conjectural density $C^{\cyc}_E$ is equal to the average density $C^{\cyc}$.

One can further consider the arithmetic progression version of the cyclicity problem. The first progress in this direction was made by the first author and Akbal \cite{MR4504664}, who proved under the GRH that there exists a constant $C^{\cyc}_{E,n,k} \ge 0$ such that
\bs{
\pi_E^{\cyc}(x;n,k) &\coloneqq \#\{p\le x : p \nmid N_E, p \equiv k \pmod n, E(\F_p)\text{ is cyclic}\} \\
&\sim C^{\cyc}_{E,n,k}  \frac{x}{\log x},
}
as $x \to \infty$. Subsequent work \cite{MR5065245} determined the cases in which the constant $C^{\cyc}_{E,n,k}$ vanishes, and hence the arithmetic progressions for which there is no prime $p$ such that $E(\F_p)$ is cyclic.

On the average side, using the method of Banks and Shparlinski, the third author \cite{MR4925929} proved that there exists a constant $C^{\cyc}_{n,k}>0$ for which
$$\frac{1}{|\sF|}\sum_{E\in \sF} \pi_E^{\cyc}(x;n,k) \sim C^{\cyc}_{n,k}  \frac{x}{\log x},$$
as $x\to \infty$, upon appropriate choices of $A,B,n$ with respect to $x$. Finally, building on Jones' methods, the third and fourth authors and Mayle  proved in \cite{Lee_Mayle_Wang_2025} that the average of the conjectural density $C^{\cyc}_{E,n,k}$ over $\sF$ is equal to the average density $C^{\cyc}_{n,k}$. 


\subsection{Overview of the paper}\label{subsec:overview}
To prove the geometric problem of counting elliptic curves in $\sF$, we translate it into an analytic problem involving weighted sums over prime pairs. Following the general approach of \cite{BCD}, we carry out this translation in several steps. 


We begin by expressing the average over $\sF$ as a sum over primes of local counting functions $\nu(p; \mathcal{P})$, shifting our focus to a fixed finite field for each prime $p$, counting elliptic curves over $\F_p$ with a prime number of rational points. Carried out in Section \ref{sec:mainthm} and Appendix \ref{app:lem-proof}, this initial step reduces the proof of Theorem \ref{thm:averagekoblitz} to that of \Cref{thm:local}. 
 
Next, in Lemma \ref{lem:first-translate}, we analyze the local counting function by partitioning the curves $E$ according to their Frobenius traces, given by $p + 1 - |E(\F_p)|$.
For each fixed $r$, Deuring's theorem expresses the number of isomorphism classes of elliptic curves over $\F_p$ with trace $r$ in terms of the Hurwitz class number of an associated imaginary quadratic discriminant. We then use the analytic expression for the Hurwitz class number in terms of Dirichlet $L$-functions evaluated at $1$. After truncating the corresponding $L$-series at a suitable parameter and applying quadratic reciprocity, the resulting character sums can be rewritten as explicit congruence conditions on the prime $p$ modulo an appropriate combined modulus. Thus, Lemma \ref{lem:first-translate} converts the original local geometric counting problem into a weighted arithmetic problem involving primes in congruence classes and $r$. 

In Lemma \ref{lem:second-transform}, we reduce the problem to obtaining an asymptotic formula for a weighted sum 
that counts pairs of primes 
with a fixed difference $r$ determined by the trace. This can be viewed as an elliptic curve analogue of the classical Hardy--Littlewood prime pair problem \cite{BCD, HardyLittlewood1923}. The required asymptotic formula is established in Proposition \ref{prop:asym4Flr}, which is later used to complete the proof of Theorem \ref{thm:local} in Section \ref{subsec:proofthmlocal}. Crucially, our proof tracks the modulus $n$ throughout. While all intermediate results leading up to \eqref{eq:choice_of_n} 
hold uniformly for $n$ growing polynomially in $x$, the stronger restriction $n \le (\log x)^{O(1)}$ is invoked in \eqref{eq:choice_of_n} to keep the error term subdominant to the main term in Theorem \ref{thm:local}.

A significant challenge in proving Proposition \ref{prop:asym4Flr}, which establishes the asymptotic formula for the main term, is to uncover the quasi-multiplicative structure of the truncated partial sums. This is where the main differences from the work of \cite{BCD} arise. 
Without the restriction to arithmetic progressions, the relevant arithmetic functions can be analyzed more directly using the theory of multiplicative functions.  Incorporating the arithmetic progression condition,  however, introduces intricate parameter dependencies that obscure this underlying multiplicative structure.

The main term obtained in Proposition \ref{prop:asym4Flr} is evaluated in Section \ref{sec:asym4Flr}, expressing the relevant constant $\ckn$ as an Euler product. One of the main difficulties lies in explicitly computing one of its factors, $\cS(n;k)$, which is done in Section \ref{sec:fullEuler-product}
by combining the evaluation of complete quadratic character sums and Hensel's lemma. 
The error analysis of Proposition \ref{prop:asym4Flr} consists of two principal components. The first component is Proposition \ref{prop:error-term-varE}, which provides a variation of the Barban--Davenport--Halberstam type estimate given in \cite[Theorem 4]{BCD}.
The second component is the remaining errors arising from the truncations in Proposition \ref{prop:fixedr} and \eqref{Crsumfinal}, which use bounds on arithmetic functions together with Rankin's trick.

\subsection{List of notations}

    \begin{enumerate}
    \item We use $n$ to denote a modulus and $k$ an integer coprime to $n$.
    \item We use $p, p'$, $\ell$, and $q$ to denote primes. 
    \item We use $\tau(n)$ to denote the number of distinct positive factors of $n$, and $\varphi(n)$ the Euler's totient function. We denote by $\chi_d(m)$ the Kronecker symbol $\left(\frac{d}{m}\right)$. The  definitions of quadratic characters $\psi_r(q)$ and $\eta_r(q)$ are given in \eqref{psir} and \eqref{etarq}.
    \item We use $\lceil x\rceil$ to denote the ceiling function, and  $\fl{x}$ to denote the floor function.
    \item We use $\log^n x$ to denote $(\log x)^n$.

        \item  
        For $a,b \in \Z$, we denote by $E_{a,b}$ the elliptic curve defined by
        \begin{equation*}
            Y^2=X^3+aX+b,
        \end{equation*}
    and by $\Delta_{a,b}$ its discriminant. For a prime $p\nmid\Delta_{a,b}$, we also denote by $E_{a,b}/\F_p$ its reduction modulo $p$. More generally, for $a,b\in\F_p$ with $\Delta_{a,b}\neq 0$, we denote by $E_{a,b}$ the elliptic curve over $\F_p$ defined by the same Weierstrass equation.

    \item For a prime $p$, we define $\nu(p;\cP)$ as in \eqref{nup}. 
        \item Parameters are introduced in Section \ref{sec:Loc2PrimeSums} and henceforth: 
        \newline
        Let $M \ge 1$ be an integer and $L \coloneqq \fl{\log^M x}$ and $U \coloneqq x^{1/2} L$. We write $Y = x/L$. For $0 \le l \le L-1$, we write $X_l = lY$ and $\cI_l = (X_l,X_{l+1}]$. We take $r$ and $f$ as positive odd integers.
        
\item The following arithmetic functions are used throughout the paper: 
\begin{center}
\begin{tabular}{l|c}
Function & Defined in equation \\
\hline
$\mathcal{F}_l(r)$ & \eqref{Fl(r)} \\
$\mathcal{T}_l(r)$ & \eqref{T(r)} \\
$c_f^r(m)$ & \eqref{cfr(m)} \\
$H(m,f,\beta, \gamma;F)$ & \eqref{eq:definition-H} \\
$Q_r(q)$ & \eqref{Qr} \\
$\mathcal{S}(n, k)$ & \eqref{def:Snk} \\
\end{tabular}
\end{center}

\end{enumerate}

\subsection{Tool and computational resource disclosure}

In accordance with the recommendations of the Leiden Declaration on Artificial Intelligence and Mathematics, we disclose the following uses of large language models in preparing this paper:
we used ChatGPT 5.6 Sol and Gemini 3.6 Flash to check grammar and to improve the precision and clarity of existing sentences and explanations.
Each author individually reviewed the suggestions provided by these models, and evaluated them jointly, incorporating them where we judged them to be correct and appropriate. We have reviewed the final manuscript and take full responsibility for its content. 

\section{Proof of Theorem \ref{thm:averagekoblitz}} \label{sec:mainthm}


The following result  relates the average of $\pi_E^{\Prime}(x; k, n)$ with a weighted average of $\nu(p; \mathcal{{P}})$ for $p\equiv k\pmod n$.


\begin{lemma}\label{lem:average_estimate}
With the conditions stated in Theorem \ref{thm:averagekoblitz}, 
\eqs{
\frac{1}{|\sF|} \su{E \in \sF} \pi_E^{\Prime}(x; k, n) = \su{p \le x\\ p \equiv k \mod n} \frac {\nu(p; \cP)}{(p-1)^2} + O \Bigl( \frac{x}{\varphi(n) \log^3 x} \Bigr),
}
where $\nu(p; \cP)$ is defined in \eqref{nup}.
\end{lemma}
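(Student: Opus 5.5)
Following \cite{BCD}, swap the order of summation. We have
\eqs{
\su{E\in\sF}\pi_E^{\Prime}(x;k,n)=\su{p\le x\\ p\equiv k \mod n}\#\{(a,b): |a|\le A,|b|\le B,\ p\nmid\Delta_{a,b},\ |E_{a,b}(\F_p)|\in\cP\}.
}
For each $p$, the count depends only on the residues $(a \bmod p, b\bmod p)$. Let $\Phi_p(s,t)$ be the indicator that $p\nmid\Delta_{s,t}$ and $|E_{s,t}(\F_p)|\in\cP$. Then the inner count is $\sum_{s,t\in\F_p}\Phi_p(s,t)\,N_A(s)N_B(t)$, where $N_A(s)=\#\{|a|\le A: a\equiv s \bmod p\}=2A/p+O(1)$. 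The plan is to expand $\Phi_p$ in additive characters of $\F_p^2$:
\eqs{
\sum_{s,t}\Phi_p(s,t)N_A(s)N_B(t)=\frac{(2A+O(1))(2B+O(1))}{p^2}\nu(p;\cP)+\text{(nontrivial-character terms)}.
}
The error terms involve $\hat\Phi_p(u,v)$ times incomplete exponential sums over the boxes, which are bounded by $\min(A,p/|u|)$ and similarly in $v$. Dividing by $|\sF|=4AB+O(A+B)$ gives the main term $\nu(p;\cP)/p^2$.

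Next replace $p^2$ by $(p-1)^2$. The curves $E_{s,t}$ come in isomorphism classes of size $(p-1)/\#\mathrm{Aut}$, and $\nu(p;\cP)\le p^2$. So the change costs $O(\nu(p;\cP)/p^3)\ll 1/p$ per prime. The total over primes $p\equiv k\bmod n$ up to $x$ is $\ll \log\log x$, which is acceptable, or even $O(1/\varphi(n)\cdot\log\log x)$ via Brun–Titchmarsh. The terms $O(A+B)$ in $|\sF|$ and the $O(1)$ in $N_A$ contribute relative errors $\ll p/A+p/B+1/A+1/B$. Bounding these trivially by $\sum_{p\le x}p(1/A+1/B)\ll x^{2}/(x^{\eps}\log x)$ is too weak, so this treatment must be refined. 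The refinement is to handle the $O(1)$ boundary terms together with the character sum, in the standard BCD way.

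The main obstacle is the nontrivial-character contribution. I would follow the BCD argument, which is essentially a large-sieve/Pólya–Vinogradov bound. Grouping $s,t$ by $j$-invariant/twist classes, one writes $\Phi_p(s,t)$ as a function of the class of $(s,t)$ under the action $(s,t)\mapsto(\lambda^4 s,\lambda^6 t)$. This action gives multiplicative-character structure: the sum over a box of $\lambda$-orbits is controlled by character sums $\sum_{|a|\le A,|b|\le B}\chi(\cdot)$. The total over all $p\le x$ is then bounded via the large sieve for multiplicative characters, yielding an error
\eqs{
\ll \frac{1}{AB}\Bigl(x^{2}(A+B)+ x^{3/2}\sqrt{AB}\,\log^{O(1)}x\Bigr)\cdots.
}
Under $A,B>x^\eps$ and $AB>x\log^{10}x$, this is $O(x/\log^{3}x)$. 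Rather than re-deriving this, I would quote the corresponding estimate of \cite{BCD}, in the form proved in Appendix~\ref{app:lem-proof}. I would observe that restricting the sum over $p$ to $p\equiv k\bmod n$ only drops nonnegative terms when bounding absolute values. To obtain the sharper $1/\varphi(n)$ factor, apply the same large-sieve bound with $x$ replaced by the progression. Because $n\ll\log^{C}x$, a saving of $\varphi(n)$ can also be absorbed by strengthening the power of $\log$. If needed, use $AB>x\log^{10}x$ together with the mild size of $n$ to cover the loss, adjusting the log power in the hypothesis via $x_0(C,\eps)$.
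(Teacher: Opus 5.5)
Your proposal has a genuine gap: it never proves the key estimate, and the parts you do sketch do not give the stated error. The additive-character expansion is started, but you never bound $\hat\Phi_p(u,v)$. Invariance of $\Phi_p$ under $(s,t)\mapsto(\lambda^4 s,\lambda^6 t)$ together with Parseval only gives $|\hat\Phi_p(u,v)|\ll p^{3/2}$ for $uv\neq 0$. Combined with $\sum_{u\neq 0}\min(A,p/|u|)\ll p\log p$, this yields an error bound of size $x^{5/2}\log x/(AB)$, which is useless under $AB>x\log^{10}x$.

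You then switch to the twist-orbit/multiplicative-character picture, which is the right idea and the one the paper uses, but you do not carry it out. Instead you propose to quote ``the form proved in Appendix~\ref{app:lem-proof}''. That appendix is the proof of this very lemma, so the argument is circular. Quoting \cite{BCD} instead does not suffice either: its estimate covers all $p\le x$ and gives only $O(x/\log^3 x)$, without the factor $1/\varphi(n)$. Your displayed bound is also not $O(x/\log^3x)$. Its first term $x^2(A+B)/(AB)=x^2(A^{-1}+B^{-1})$ can be as large as $x^{2-\eps}$ when $A,B>x^\eps$. It is exactly the trivial boundary term you had already recognized as too weak.

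Your mechanism for the $1/\varphi(n)$ saving also fails. A large-sieve bound runs over all moduli $q\le x$, so restricting $p$ to a progression does not improve it. You also cannot ``adjust the log power in the hypothesis'': $AB>x\log^{10}x$ is fixed while $C$ is arbitrary. In the paper, one first discards $p\mid ab$, where $\chi(0)=0$ and orbit sizes differ. One then writes $\#\{E\in\sF: E_p\simeq E_{s,t}\}=\frac{1}{2(p-1)}\sum_{\chi_1^4\chi_2^6=\chi_0}\chi_1(s)\chi_2(t)\cA(\overline{\chi}_1)\cB(\overline{\chi}_2)$.

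\begin{itemize}
\item The term with both characters nontrivial is bounded \emph{per prime} by $(ABp)^{1/2}\log^3 p$. This uses H\"older with exponents $4,4,2$, Parseval over $(\chi_1,\chi_2)$, and the fourth-moment bound of \cite{FriIwa1985}. Summed over $p\equiv k \bmod n$, it gives $x^{3/2}\log^2 x/(\varphi(n)\sqrt{AB})\le x/(\varphi(n)\log^3 x)$ with no room to spare. So the $1/\varphi(n)$ must come from exactly this per-prime summation.
\item The terms with exactly one trivial character use H\"older, keeping the progression count $(x/(\varphi(n)\log x))^{1-1/(2l)}$ in one factor and Gallagher's large sieve in the other. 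The leftover $\varphi(n)^{1/(2l)}$ is absorbed by the power saving in $x$, using $A,B>x^\eps$ and $l$ large in terms of $\eps$.
\item The class-by-class counting also removes your boundary-term worry. The trivial-character term is $|\sF|/(2(p-1))+O(AB/p^2+(A+B)/p)$ per isomorphism class. After the weight $2/((p-1)|\sF|)$ and summation over the at most $p^2$ pairs $(s,t)$, it costs only $O(1/p+1/A+1/B)$ per prime, and the denominator $(p-1)^2$ appears directly.
\end{itemize}
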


The proof of Lemma \ref{lem:average_estimate} is given in Appendix \ref{app:lem-proof} and closely follows the arguments of \cite{BCD}, with the primary difference being the restriction of the primes to the arithmetic progression $p \equiv k \pmod n$. 


Assuming Theorem \ref{thm:local}, which will be proved in Section \ref{subsec:proofthmlocal}, we can now proceed to the proof of Theorem \ref{thm:averagekoblitz}.
\begin{proof}[Proof of Theorem \ref{thm:averagekoblitz}]
Assume that $n \ll (\log x)^C$ for some positive constant $C$. Let $z=x/\log^3 x$. Since $\nu(p;\cP) \ll p^2$, we see that
\eqs{
\su{  p \le z\\ p \equiv k \mod n} \frac{\nu(p; \cP)}{(p-1)^2} \ll z/n + O(1) \ll  \frac{ x}{\varphi(n) \log^3 x}.
}
For some constant $x_1 > 0$ and $x > x_1$,   
\eqs{
n \ll (\log t)^{2C} \quad \text{for any $t \in (z, x]$.}
}
Thus, there exists $x_2 = x_2(2C)$ for which \eqref{ALC} holds for each $t \in (z,x]$, as long as $x$ is sufficiently large (depending on $x_2$). Therefore, one can apply Theorem \ref{thm:local} and use partial integration to get

\bs{
\su{ z < p \le x\\ p \equiv k \mod n} \frac{\nu(p; \cP)}{(p-1)^2} 
& = \frac{C^{\Prime}_{n,k}} 3 \frac{x}{\log^2 x }  + \frac{2C^{\Prime}_{n,k}} 3 \frac{x}{\log^2 x} \Bigl(1 + O (1/\log x)\Bigr) \\& \qquad + O \Bigl(\frac{ x}{\varphi(n) \log^3 x}\Bigr),\\
&= C^{\Prime}_{n,k} \frac{x}{\log^2 x } + O \Bigl(\frac{ x}{\varphi(n) \log^3 x}\Bigr).
}
The result follows by combining the above findings with Lemma \ref{lem:average_estimate}.

\end{proof}

\section{Transition from Local Counts to Prime Pair Sums}\label{sec:Loc2PrimeSums}

In this section, we translate the problem from local counts to prime pair sums. 
First, we use Deuring's theorem and Dirichlet $L$-functions to convert the local geometric count $\nu(p; \cP)$ into a weighted arithmetic sum of primes in congruence classes to get Lemma \ref{lem:first-translate}. Next, we reformulate this sum as an elliptic analogue of the classical Hardy–Littlewood prime pair problem to get Lemma \ref{lem:second-transform}.


\begin{lemma}\label{lem:first-translate}
Let $x\ge 3$ be a real number, $M \ge 1$ a fixed integer, $L= \fl{\log^M x}$, and $U = x^{1/2} L$. For  $l=0, \ldots, L-1$, we define $Y:=x/L$, $X_l: = lY$, and $\cI_l = (X_l, X_{l+1}]$. Then, there exists an $x_0 > 0$ such that for $x > x_0$ and $n \le Y/2$,  
\multn{\label{eq:first-translate} 
\su{p \le x\\ p \equiv k \mod n} \nu(p; \cP)
= 
\frac 1 {2\pi}
\sum_{1 \le l \le L-1} 
\su{|r|\le 2\sqrt {X_l}} 
\su{p \in \cI_l \\ p \equiv k \mod n \\ p+1 -r \in \cP}  (p-1)\sqrt{4p-r^2} 
\\
\cdot \su{f \ge 1 \\f^2 \mid r^2-4p \\ d \equiv 0,1 \mod 4}  \frac 1 f \sum_{m \le U} \frac  {\chi_d (m)} m +  O\Bigl(\frac{x^3 \log^2 x}{\varphi(n) L} \Bigr),
}
where $d:=(r^2-4p)/f^2$ and $\chi_d(m):= \bigl( \frac d m \bigr)$ is the Kronecker symbol.
\end{lemma}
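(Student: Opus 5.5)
We follow the strategy of \cite{BCD}, keeping track of the congruence $p\equiv k \pmod n$. The plan has four steps: express $\nu(p;\cP)$ through Hurwitz class numbers, insert the class number formula, truncate the $L$-series at $U$, and remove the dependence on $p$ in the range of $r$ by the dyadic-type decomposition into the intervals $\cI_l$.

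\textbf{Step 1 (Deuring).} For $p>3$, each isomorphism class of $E/\F_p$ corresponds to $(p-1)/|\mathrm{Aut}(E)|$ pairs $(s,t)$ with $p\nmid\Delta_{s,t}$. By Deuring's theorem, the weighted number of classes with trace $r$, $|r|<2\sqrt p$, is the Kronecker--Hurwitz class number $H(r^2-4p)$. Hence
\eqs{
\nu(p;\cP)=(p-1)\su{|r|<2\sqrt p\\ p+1-r\in\cP} H(r^2-4p).
}
Traces $r$ with $p \mid r$ contribute only $O(p)$, since then $r=0$ and there is at most one such $r$. Next write
\eqs{
H(r^2-4p)=\su{f^2\mid r^2-4p\\ d\equiv 0,1 \mod 4}\frac{h(d)}{w(d)}
}
and apply the class number formula $h(d)/w(d)=\sqrt{|d|}\,L(1,\chi_d)/(2\pi)$. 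Since $\sqrt{|d|}=\sqrt{4p-r^2}/f$, this produces exactly the weight $(p-1)\sqrt{4p-r^2}\cdot\frac1f L(1,\chi_d)/(2\pi)$ appearing in \eqref{eq:first-translate}. Non-fundamental $d$ are handled by writing $L(1,\chi_d)$ via the Kronecker symbol, which is what the statement uses.

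\textbf{Step 2 (truncation).} Replace $L(1,\chi_d)$ by $\sum_{m\le U}\chi_d(m)/m$. Pointwise Pólya--Vinogradov gives a tail of size $O(|d|^{1/2}\log|d|/U)=O(\log x/L)$, since $U=x^{1/2}L$. The total error is then
\eqs{
\sum_{p\le x,\,p\equiv k\,(n)} p\sum_{|r|\le 2\sqrt p}\sqrt p\,\sum_{f}\frac1f\cdot \frac{\log x}{L}\ll \frac{x^3\log^2 x}{\varphi(n)L},
}
where the $f$-sum contributes $\ll\tau(r^2-4p)$ on average (or simply $\log x$ via $\sum_f 1/f$), and the count of primes in the progression contributes $x/(\varphi(n)\log x)$ by Brun--Titchmarsh, valid since $n\le Y/2 \le x$. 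The extra logarithm needed to reach $\log^2 x$ is absorbed from the $f$-sum. I expect this bound to be the main obstacle: a pointwise Pólya--Vinogradov bound loses a factor $\sqrt{x}\log x$ per term, so one must check that the budget $\log^2 x/L$ suffices. If the Brun--Titchmarsh step gives only $x/\varphi(n)$, the $\log^2 x$ allowance covers it anyway.

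\textbf{Step 3 (intervals).} Split $p\le x$ into the blocks $\cI_l$. The block $l=0$ (primes $p\le Y$) contributes trivially $\ll Y^3/\varphi(n)+Y^2\ll x^3/(\varphi(n)L)$, using $n\le Y/2$ so that the progression contains $\ll Y/n$ integers. For $p\in\cI_l$ with $l\ge1$, replace the condition $|r|<2\sqrt p$ by $|r|\le 2\sqrt{X_l}$. The discarded $r$ satisfy $2\sqrt{X_l}<|r|<2\sqrt p$, so there are $\ll Y/\sqrt{X_l}$ of them, each with weight $\ll p\sqrt{4p-r^2}\,\log^2 x\ll p(pY/\sqrt{X_l})^{1/2}\log^2 x$ (as $4p-r^2\le 4(p-X_l)$). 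Summing over $p\in\cI_l$ in the progression and over $l$ gives an error well within $O(x^3\log^2 x/(\varphi(n)L))$, because $\sum_l (Y/\sqrt{X_l})^{3/2}\ll Y^{3/2}\,x^{-3/4}L^{1/4}$ is negligible. Collecting Steps 1--3 yields \eqref{eq:first-translate}.
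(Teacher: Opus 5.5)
Your proposal follows the paper's proof. Both use Deuring's formula (the paper cites Cox, Thm.~14.18) and the class number formula for orders to get $\frac{1}{2\pi}(p-1)\sqrt{4p-r^2}\sum_f f^{-1}L(1,\chi_d)$. Both then truncate at $U$ via P\'olya--Vinogradov and use crude bounds for the block $\cI_0$ and for the strip $2\sqrt{X_l}<|r|<2\sqrt p$. Your trivial bound $\nu(p;\cP)\le p^2$ on $\cI_0$ is a bit simpler than the paper's use of the Hurwitz bound. Your truncation estimate is cruder: you bound $\sqrt{|d|}\le 2\sqrt x$ and $\sum_f 1/f\ll\log x$, whereas the paper uses $\sqrt{|d|}=\sqrt{4p-r^2}/f$. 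It still fits in the budget, even with the trivial count $x/n+1$ of primes in the progression.

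The arithmetic in Step 3, however, does not close as written. Your own parenthetical gives $4p-r^2<4(p-X_l)\le 4Y$, hence a weight $\ll p\sqrt{Y}\log^2 x$ per discarded $r$. The expression $p(pY/\sqrt{X_l})^{1/2}\log^2 x$ you wrote instead is larger by a factor $\asymp X_l^{1/4}$. Carried through, it gives a total of order $x^{13/4}\log^2 x/(nL^{3/2})$, which exceeds the allowed error. Also, $\sum_{l\le L}(Y/\sqrt{X_l})^{3/2}\asymp Y^{3/4}L^{1/4}$, not $Y^{3/2}x^{-3/4}L^{1/4}$.

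With the correct weight, there are $\ll Y/\sqrt{X_l}$ discarded $r$ and $\ll Y/n$ primes in $\cI_l$ (using $n\le Y/2$), so the strip contributes
\[
\ll \sum_{1\le l\le L}\frac{Y}{n}\cdot X_{l+1}\cdot\frac{Y}{\sqrt{X_l}}\cdot\sqrt{Y}\,\log^2 x\ll\frac{x^3\log^2 x}{nL^{3/2}}.
\]
This is within, indeed slightly better than, what the paper claims.

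Finally, drop the aside about $p\mid r$. The quantity $(p-1)H(-4p)$ is typically of size $p^{3/2+o(1)}$, not $O(p)$. The point is moot anyway: $r=0$ makes $p+1$ even, so that term never occurs in $\nu(p;\cP)$.
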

\begin{proof}
By \cite[Theorem 14.18]{Cox}, we can express $\nu(p;\cP)$ in terms of the Hurwitz class numbers, namely,
\eqs{
\nu(p; \cP) = \su{|r| < 2\sqrt p\\ p+1 -r \in \cP} \frac{p-1}{2} H(r^2 - 4p).
}
For each $r$ with $|r|<2\sqrt{p}$,  the class number is given by

\eqn{\label{Hurwitz}
H(r^2 - 4p) = H(\cO) = \sum_{\cO \subseteq \cO' \subseteq \cO_K} \frac {2h(\cO')}{|\cO'^\times|},
}
where $\cO$ is the order of discriminant $r^2-4p$ in the imaginary quadratic field 

\noindent $K = \Q(\sqrt{r^2 - 4p})$, the sum runs over orders $\cO'$ of $K$ containing $\cO$, and $h(\cO')$ is the class number of $\cO'$. Therefore, 
\eqn{\label{nuwithHurwitz}
\su{p \le x\\ p \equiv k \mod n} \nu(p; \cP) = \su{p \le x\\ p \equiv k \mod n} \su{|r| < 2\sqrt p\\ p+1 -r \in \cP} \frac{p-1}{2} H(r^2 - 4p).
}
Using the following bound 
\eqn{ \label{Hurwitzbound}
H(r^2 - 4p) \ll \sqrt{4p-r^2} \log^2 (4p-r^2) }
from \cite[Lemma 11]{BCD} together with the Brun--Titchmarsh inequality
\eqs{
\pi(x;k,n) \ll \frac{x}{\varphi(n) \log (x/n)},
}
we see that the contribution of the first interval $\cI_0$ is 
\eqs{
\su{p\le Y\\ p \equiv k \mod n} \su{|r| < 2\sqrt p\\ p+1 -r \in \cP} \frac{p-1}{2} H(r^2 - 4p) 
\ll \log^2 x\su{p\le Y\\ p \equiv k \mod n} p^2  
\ll \frac{x^3 \log^2 x}{\varphi(n) L}.
}
Using \eqref{Hurwitzbound} again, we see that the contributions of all $1 \le l \le L-1$ and $p\in \mathcal{I}_l$ in the ranges $2\sqrt{X_l} < |r| < 2\sqrt p$ are 
\eqs{
\ll \frac{x^3 \log^2 x}{n L},
}
where the prime sum is bounded by $O (X_{l+1} Y/n) $. Next, we replace the class number with another expression. If $\cO'$ is an order such that $\cO \subseteq \cO' \subseteq \cO_K$ and $f=[\cO':\cO]$, then $f^2\mid \operatorname{disc}(\cO)$ and 
\[
\operatorname{disc}(\cO')=\frac{\operatorname{disc}(\cO)}{f^2} = \frac{r^2-4p}{f^2} := d.
\]
Thus, each order $\cO' \supseteq \cO$ is uniquely determined by the index $[\cO':\cO]$.
Using the formula 
\eqs{\frac{h(\cO')}{|\cO'^\times|} = \frac{\sqrt{|\operatorname{disc}(\cO')|}}{2\pi} L(1, \chi_{\operatorname{disc}(\cO')}),}
where  $\chi_{\operatorname{disc}(\cO')}$ is the Kronecker symbol, in \eqref{Hurwitz} yields
\eqs{
H(r^2-4p) = \frac{\sqrt{4p-r^2}}{\pi } \su{f \ge 1 \\f^2 \mid r^2-4p \\ d \equiv 0,1 \mod 4}  \frac  {L (1, \chi_d)} f.
}
Taking account of the error terms and using the above expression for the Hurwitz class number, \eqref{nuwithHurwitz} turns into

\mult{
\su{p \le x\\ p \equiv k \mod n} \nu(p; \cP)
= 
\frac 1 {2\pi}
\sum_{1 \le l \le L-1} 
\su{|r|\le 2\sqrt {X_l}} 
\su{p \in \cI_l \\ p \equiv k \mod n \\ p+1 -r \in \cP}  (p-1)\sqrt{4p-r^2} 
\\
\cdot \su{f \ge 1 \\f^2 \mid r^2-4p \\ d \equiv 0,1 \mod 4}  \frac  {L (1, \chi_d)} f  +  O\Bigl(\frac{x^3 \log^2 x}{\varphi(n) L} \Bigr).
}
Using the Polya--Vinogradov inequality, we find that
\eqs{
L (1, \chi_d) = \sum_{m \le U} \frac{\chi_d (m)}{m} + O \Bigl(\frac{\sqrt{|d|} \log |d|}{U}\Bigr).
}
Inserting this into the previous equation yields an error 
\eqs{
\ll \frac{x^{7/2} \log x}{nU} \le \frac{x^3 \log^2 x}{\varphi(n) L}. 
}
Combining everything, we get \eqref{eq:first-translate}.
\end{proof}

We now make the following observations for the indices of the main term in Lemma \ref{lem:first-translate}:
\begin{enumerate}[label=(\roman*)]

\item For $p > 5$, by the Hasse-Weil bound, $p+1-r$ is an odd prime  only if $r$ is odd. We take $x$ large enough so that $Y > 5$. Note that the sum of $\nu(p;\cP)$ over $\cI_0$ is absorbed into the error in Lemma \ref{lem:first-translate}.

\item Since $r$ is odd, $df^2 = r^2 - 4p \equiv 1 \mod 4$. Hence, $f$ is also odd and $d \equiv 1 \mod 4$. 

    \item Since $p \equiv k \mod n$ and $4p \equiv r^2 \mod f^2$, we must have $4k \equiv r^2 \mod (n,f^2)$. 

    \item The character $\chi_d (m)$ can be thought of as a character modulo $4m$. In particular, if $d \equiv a \bmod {4m}$, then $a \equiv 1 \pmod {4}$ and  
\eqs{
\Bigl( \frac d m \Bigr) = \Bigl(\frac a m \Bigr), \qquad p \equiv  \frac{r^2 - af^2}{4} \mod m f^2.
}

\item Since $p \equiv k \mod n$, we must have that
\eqs{
	(n, m f^2) \mid k - \frac{r^2 - af^2} 4.
}
\end{enumerate}

Taking these observations into account and switching the $f$-sum with the $p$-sum, we can update the sums on the right-hand side of \eqref{eq:first-translate} as follows:

\multn{\label{eq:second-translate}
\frac 1 {2\pi}
\sum_{1 \le l \le L-1} 
\su{|r|\le 2\sqrt {X_l}\\ 2 \nmid r} 
\su{f \le 2\sqrt{X_{l+1}} \\ 2 \nmid f\\ (n,f^2) \mid 4k-r^2} 
\frac 1 f \sum_{m \le U} \frac 1 m 
\su{a \mod 4m\\ a \equiv 1 \mod 4\\ (n, m f^2) \mid k - \frac{r^2 - af^2} 4} \Bigl( \frac a m \Bigr) 
\\
\su{p \in \cI_l, p+1 -r \in \cP \\ p \equiv k \mod n \\ p \equiv  (r^2 - af^2)/4 \mod m f^2 \\ }  (p-1)\sqrt{4p-r^2}.
}
Next, we change the weight of the prime sum and the range of the $f$-sum to obtain the following result. 

\begin{lemma}\label{lem:second-transform}
Using the same notation and assumptions as in Lemma \ref{lem:first-translate}, we have the following:
\multn{\label{eq:second-transform}
\su{p \le x\\ p \equiv k \mod n} \nu(p; \cP) 
= \frac 1 {2\pi} \sum_{1\le l\le L-1} \frac{X_l}{\log^2 (X_{l+1})} \su{|r| \le 2\sqrt{X_l}} \cF_l(r) \sqrt{4X_l - r^2}  \\
+ O \Bigl( \frac{x^3}{L} \Bigl( \frac{\log^2 x}{\varphi(n)} + \log x \Bigr) \Bigr),}
where for odd $r\neq 1$, 
\multn{\label{Fl(r)}
\cF_l(r):= \su{m \le U\\f \le L \\ 2 \nmid f, (n,f^2) \mid 4k - r^2} \frac 1 {mf} \su{a \mod 4m\\ a = 1 \mod 4\\ (n, m f^2) \mid k - \frac{r^2 - af^2} 4} \Bigl(\frac a m \Bigr) \\
\cdot \su{
		p \in \cI_l, p+1-r \in \cP \\
		p \equiv k \mod n\\
		p \equiv (r^2 -  af^2)/4 \mod m f^2
}  \log p \log (p+1-r),
}
and $\cF_l(r)=0$ otherwise. 
\end{lemma}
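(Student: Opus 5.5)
Starting from \eqref{eq:second-translate}, which by Lemma \ref{lem:first-translate} and observations (i)--(v) equals $\sum_{p\le x,\,p\equiv k \bmod n}\nu(p;\cP)$ up to $O(x^3\log^2 x/(\varphi(n)L))$, I would make three modifications in turn. Each costs an acceptable error.

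\textbf{Step 1: replace the weight.} The weight $(p-1)\sqrt{4p-r^2}$ becomes $\frac{X_l}{\log^2 X_{l+1}}\sqrt{4X_l-r^2}\,\log p\log(p+1-r)$. The indicator $p+1-r\in\cP$ is kept, and the logarithms will later serve as von Mangoldt-type weights.

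\textbf{Step 2: truncate the $f$-sum.} The range $f\le 2\sqrt{X_{l+1}}$ is cut to $f\le L$.

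\textbf{Step 3: dispose of $r=1$.} For $r=1$ we have $p+1-r=p$, so $p+1-r\in\cP$ always holds and the pair condition degenerates. I would bound this term separately rather than include it in $\cF_l(r)$.

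For Step 1, fix $p\in\cI_l$ with $l\ge1$. Then $p-1=X_l+O(Y)$ and $\log p,\log(p+1-r)=\log X_{l+1}+O(Y/X_l+1/\sqrt{X_l})$ relative errors, since $p+1-r$ differs from $p$ by $O(\sqrt x)$. For the square root, $\sqrt{4p-r^2}-\sqrt{4X_l-r^2}=4(p-X_l)/(\sqrt{4p-r^2}+\sqrt{4X_l-r^2})\ll Y/\sqrt{4X_l-r^2}$.

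To bound the total change, the inner $f,m,a$ sum should be kept in its original form, $\frac1f L(1,\chi_d)$ up to the Pólya--Vinogradov error. This is $\ll \sqrt{4p-r^2}^{-1}H(r^2-4p)\ll\log^2x$ summed over $f$, by \eqref{Hurwitzbound}. The point is that this step should be done before any truncation, using the non-negativity of the Hurwitz structure for absolute bounds.

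The relative error in the main weight is then $\ll Y/X_l+\log X_{l+1}\cdot(\text{log difference})$. Summing over $r$, over $p\in\cI_l$ with $p\equiv k\bmod n$ (Brun--Titchmarsh gives $\ll Y/\varphi(n)\log x$ primes, or trivially $Y/n$), and over $l$, I expect an error of the form $\frac{x^3}{L}\bigl(\frac{\log^2x}{\varphi(n)}+\log x\bigr)$.

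The term $l=1$ needs care because $Y/X_1=1$. It can be bounded trivially, like $\cI_0$. The $\log x$ term without $1/\varphi(n)$ arises where I use trivial counting of integers in $\cI_l$ instead of primes. A cleaner route is to convert $\log p\log(p+1-r)/\log^2X_{l+1}=1+O(1/\log x)$ only after the other replacements.

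Step 2 is the main obstacle. Since the inner sum over $m,a$ reassembles to $\sum_{m\le U}\chi_d(m)/m$, the contribution of $f>L$ is
\[
\sum_l\sum_r\sum_{p}X_l\sqrt{4X_l-r^2}\sum_{\substack{L<f\\ f^2\mid r^2-4p}}\frac1f\Bigl|\sum_{m\le U}\frac{\chi_d(m)}m\Bigr|,
\]
with the last factor $\ll\log U\ll\log x$. I would then swap the sums: for fixed $f$ and $r$, the condition $f^2\mid r^2-4p$ restricts $p$ to $\ll 2^{\omega(f)}$ classes mod $f^2$. Dropping primality and the condition mod $n$, the number of $p\in\cI_l$ counted is $\ll 2^{\omega(f)}(Y/f^2+1)$. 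Summing $\sum_{f>L}2^{\omega(f)}/f^3\ll \log L/L^2$, together with the tail terms $\sum_{f\le2\sqrt x}2^{\omega(f)}/f\ll\log^2x$ multiplied by $\sqrt x$ counts per $r$, gives $\ll x^3\log x/L$. Here I must also check that the $+1$ term is harmless, since $x^{5/2}\log^2x\cdot x^{1/2}/\ldots$ is fine.

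Finally, the $r=1$ term, and the exclusion of even $r$ which is already handled by (i), contributes at most a single $r$ value: $\ll\sum_l X_l\sqrt{X_l}\cdot (Y/\varphi(n))\log x\ll x^{5/2}\log x$, which is negligible. Collecting the three steps yields \eqref{eq:second-transform}.
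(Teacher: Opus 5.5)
Your proposal is correct and uses the paper's three-step skeleton: change the weight, truncate to $f\le L$, and remove $r=1$. The weight-change step, however, is bounded differently.

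\textbf{How the two bounds differ.} The paper never reassembles the inner sums. It uses $|\bigl(\frac am\bigr)|\le 1$, absorbs $\frac1m\sum_a$ into $\max_{b \bmod mf^2}$, and counts all integers of $\cI_l$ in a class mod $mf^2$, ignoring both primality and $p\equiv k\bmod n$. This costs $x^3\log U/L+x^{5/2}L\log x$, and it is where the $\varphi(n)$-free term $\frac{x^3}{L}\log x$ in the lemma comes from.

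You instead collapse the $f,m,a$-sums for fixed $(p,r)$. Only $a\equiv d\bmod 4m$ survives, and $f\le 2\sqrt{X_{l+1}}$ is already the full range. The result is $\pi H(r^2-4p)/\sqrt{4p-r^2}+O(\log x/L)\ll\log^2x$ by \eqref{Hurwitzbound}. You then count primes $p\equiv k\bmod n$ by Brun--Titchmarsh, which is fine since $n\le Y/2$. This gives $O(x^3\log^2x/(\varphi(n)L))$ for Step~1. Your route keeps the $1/\varphi(n)$ at the cost of invoking the class-number bound; the paper's route is cruder but purely elementary.

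Your Step~2 and the $r=1$ term use the same trivial counting as the paper, so in your version the $\log x$ term arises there. Since $f$ is odd, $f^2\mid r^2-4p$ is a single class mod $f^2$, so your factor $2^{\omega(f)}$ is a harmless overcount.

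\textbf{Slips to fix.}

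First, the logarithmic factor has relative error
\[
\frac{\log p\log(p+1-r)}{\log^2X_{l+1}}-1\ll \frac{\text{log difference}}{\log X_{l+1}}\ll\frac{Y}{X_l\log x},
\]
not $\log X_{l+1}$ times the log difference. Read literally, your expression gives $x^3\log^3x/(\varphi(n)L)$, one logarithm more than allowed.

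Second, your ``cleaner route'' via $1+O(1/\log x)$ does not work with the absolute bounds you have (inner sum $\ll\log^2x$, primality of $p+1-r$ dropped). Discarding the factor $Y/X_l=1/l$ leaves an error $\asymp\frac{Y\log x}{\varphi(n)}\sum_l X_l^2\asymp\frac{x^3\log x}{\varphi(n)}$. That is far beyond the error permitted in \eqref{eq:second-transform}. The $1/l$ decay is exactly what produces the saving of $1/L$.

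Third, when you sum $X_lY/\sqrt{4X_l-r^2}$ over $r$, the $O(Y/\sqrt{X_l}+1)$ values with $4X_l-r^2<Y$ should be treated with the trivial bound $\sqrt{4p-r^2}-\sqrt{4X_l-r^2}\le 2\sqrt Y$. With this, the $r$-sum is $\ll X_lY$ uniformly. The paper's assertion $\sum_r(4z_l-r^2)^{-1/2}\ll 1$ passes over the same point.
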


\begin{proof}
For each  $p \in \cI_l$ and $|r| \le 2 \sqrt{X_l}$, 
\mult{
(p-1)\sqrt{4p-r^2} - \frac{X_l \sqrt{4X_l - r^2}}{\log^2 (X_{l+1})} \log p \log (p+1-r) \\
 \ll  Y \Bigl( p^{1/2} + \frac p {\sqrt{4z_l-r^2}} \Bigr),
}
where $z_l = z_l(p) \in (X_l, p)$.
Introducing this weight change in \eqref{eq:second-translate}  yields an error 
\bs{
&\ll Y\sum_{1\le l\le L-1} \su{f\le 2\sqrt{X_{l+1}} \\ m\le U} \frac 1 f \max_{b \mod mf^2} \su{p\in \cI_l \\p\equiv b \mod {mf^2}} \sum_{r\le 2\sqrt{X_l}} \Bigl(p^{1/2} + \frac{p}{\sqrt{4z_l-r^2}}\Bigr)\\
&\ll \frac{x^3 \log U}{L} + x^{5/2} L \log x \ll \frac{x^3 \log x}{L},
} 
for sufficiently large $x$, where we used the fact that $\sum_r (4z_l (p) - r^2)^{-1/2} \ll 1$ uniformly for any $z_l(p)$. Removing the range $L < f \le 2\sqrt{X_{l+1}}$ comes at the cost of an error 
\eqs{
\ll  x^2U L \log x    + \frac{x^3 \log U}{L^2} \ll  \frac{x^3 \log x}{L^2}
}
 for $x$ large enough. Finally, a crude estimate for \eqref{eq:second-translate}  gives an error
\eqs{
\ll \sum_{1\le l\le L-1} X_l^{3/2} \su{m \le U\\f \le L \\ 2 \nmid f, (n,f^2) \mid 4k - r^2} \frac 1 f ( Y/mf^2 + 1 ) 
\ll x^{5/2} \log x 
}
for $r=1$. Combining these error terms with the one in \eqref{eq:first-translate} yields the desired result.
\end{proof}

\section{Dealing with Prime Pair Sums}\label{subsec:weightedprime}

In this section, we present an asymptotic formula in Proposition \ref{prop:asym4Flr} for the average of $\cF_l(r)$, defined in \eqref{Fl(r)}, that is crucial for the proof of Theorem \ref{thm:local}. 
We begin by introducing the weighted prime pair counting function
\eqs{
\Psi (X, Y;r, q, b) =
\su
{
	X < p \le X+ Y \\ 
	p-p'=r \\
	p \equiv b \mod q\\
} 
\log p \log p'.
}
With this definition, $\cF_l(r)$ can be expressed as
\eqs{
\cF_l(r) = \su{m \le U, f \le L\\(n,f^2) \mid 4k - r^2\\2 \nmid f\\  a \mod 4m\\a \equiv 1 \pmod 4\\ (n, m f^2) \mid k - (r^2 - af^2)/4} \frac{1}{mf}
 \Bigl(\frac a m \Bigr) \Psi (X_l, Y;r-1, [ m f^2,n], b),
}
where $b=b(r,a,f,k,n)$ is the unique residue class $b$ modulo $[mf^2,n]$ satisfying the following conditions:
\eqn{\label{resclass}
b \equiv k \mod n, \qquad b \equiv \frac{r^2 - af^2}{4}  \mod m f^2.
}
First, we replace the prime sum with $Y\fS(r-1, [m f^2,n], b)$, where the prime pair constant over a progression (cf. \cite[eq. 34]{BCD}) is given by 
\eqn{\label{twinconstant}
	\fS(r,N,b) = 
	\begin{cases}
		\displaystyle	\frac 2 {\varphi (N)} \prod_{q \neq 2} \dfrac{q(q-2)}{(q-1)^2} \pr{q \mid rN\\ q \neq 2} \dfrac{q-1}{q-2} & \text{if } \begin{array}{l}
		      2 \mid r \text{ and}\\
            (b,N) = 1\\
            (b-r,N) = 1,
            \end{array} 
        \\
		0 & \text{ otherwise}.
	\end{cases}
}
Let $E (X_l, Y;r-1, [ m f^2,n], b)$ denote the difference  
\eqs{\Psi (X_l, Y;r-1, [ m f^2,n], b) - Y\fS(r-1, [m f^2,n], b).}
Recalling the definition of \eqref{Fl(r)}, we can now write 
\begin{equation}\label{eq:Flr}
\cF_l (r)= Y\cT_l (r)+ \cE_l(r),
\end{equation}
where
\eqn{\label{T(r)} 
\cT_l (r) 
 = \su{m \le U, f \le L\\2 \nmid f, (n,f^2) \mid 4k - r^2\\a \mod 4m, a \equiv 1 \pmod 4\\ (n, m f^2) \mid k - (r^2 - af^2)/4} 
	\frac{1}{mf} \Bigl(\frac a m \Bigr) \fS (r-1, [m f^2,n], b),
}
\eqs{
\cE_l (r) 
 = \su{m \le U, f \le L\\2 \nmid f, (n,f^2) \mid 4k - r^2\\a \mod 4m, a \equiv 1 \pmod 4\\ (n, m f^2) \mid k - (r^2 - af^2)/4} 
\frac{1}{mf} \Bigl(\frac a m \Bigr) E (X_l, Y;r-1, [ m f^2,n], b).
}

The following result is a variation of Barban--Davenport--Halberstam type estimate of \cite[Theorem~4]{BCD} with an extra arithmetic progression condition. 


\begin{proposition}\label{prop:error-term-varE} Recall that $L=
\lfloor \log^M x\rfloor$. 
Given $\eps > 0$, let $R$ denote a real number satisfying $x^{1/3+\eps} \le R \le x$. Assume that $n \ll x^C$ for some $C < 1/2$. For any $M'>1$, there exists a constant $x_0 (M', M, C)>0$ such that for all $x\ge x_0 (M', M, C)$,   
\eqs{
\su{|r| \le R\\ 2 \nmid r, r\neq 1} \cE_l(r) \ll_{C, M'} \frac{\tau(n)^{1/2} R x}{(\log x)^{M'-1}}.
}
\end{proposition}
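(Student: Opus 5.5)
We will reduce the estimate to the Barban--Davenport--Halberstam type bound of \cite[Theorem~4]{BCD}. That result controls
\[
\sum_{|r|\le R}\ \sum_{q\le Q}\ \sum_{\substack{b \bmod q\\ (b,q)=1}} \bigl|E(X,Y;r,q,b)\bigr|^2 ,
\]
or a closely related variant with a weight over $q$, by $\ll R\,Y^{2}(\log x)^{-A}$ plus a term of size $RQ\,x\log x$. The bound holds for $Q \le x^{1-\eps'}$, hence in particular for $Q\le x^{1/2+\eps'}$, provided $R \ge x^{1/3+\eps}$.

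\textbf{Step 1: rewrite $\cE_l(r)$.} Write
\[
\cE_l(r)=\sum_{m,f}\frac1{mf}\sum_a \Bigl(\frac am\Bigr)E\bigl(X_l,Y;r-1,q,b\bigr), \qquad q=[mf^2,n],
\]
with $b=b(r,a,f,k,n)$ as in \eqref{resclass}. Apply the triangle inequality, so that $|(a/m)|\le1$.

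\textbf{Step 2: control the multiplicity of the map $a\mapsto b$.} For fixed $m,f,n,k,r$, the map $a \bmod 4m \mapsto b \bmod q$ is injective up to bounded multiplicity. The key fact is that $a$ enters only through $b\equiv (r^2-af^2)/4 \pmod{mf^2}$, and $a \bmod m$ determines $b \bmod mf^2$ (with $f$ odd). Hence, for fixed $(m,f)$, the $a$-sum is at most $O(1)$ times a sum of $|E(X_l,Y;r-1,q,b)|$ over distinct residues $b \bmod q$. Only residues with $(b,q)=(b-r+1,q)=1$ matter, up to trivial terms: when these coprimality conditions fail, $\fS=0$ and $\Psi$ counts $O(1)$ primes, giving a negligible contribution of size $\ll R\,UL\log^2x$.

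\textbf{Step 3: group by the modulus $q$ and apply Cauchy--Schwarz.} Group the terms according to $q=[mf^2,n]\le UL^2n$. Since $n\ll x^{C}$ with $C<1/2$ and $U=x^{1/2}L$, we have $q\le Q:=x^{1/2+C}L^3<x^{1-\delta}$, which is admissible for \cite[Theorem~4]{BCD}. For each $q$, let
\[
w(q)=\sum_{\substack{m,f\\ [mf^2,n]=q}}\frac1{mf}.
\]
Cauchy--Schwarz then gives
\[
\sum_r|\cE_l(r)| \ll \Bigl(\sum_r\sum_q w(q)^2\,\varphi(q)\Bigr)^{1/2}\Bigl(\sum_r\sum_q\sum_b |E|^2\Bigr)^{1/2}.
\]
The second factor is $\ll \bigl(R\,Y^2(\log x)^{-A}+RQx\log x\bigr)^{1/2}$. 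Since $Q\le x^{1/2+C+o(1)}$, the term $RQx\log x$ is dominated, after taking square roots, by a power saving against $R\,Y^2$, because $Y = x/L$ is essentially $x$.

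\textbf{Step 4: bound the weights, the main obstacle.} The first factor is where $\tau(n)$ enters. Write $q=[mf^2,n]=mf^2\,n/(n,mf^2)$, so that
\[
\varphi(q)\le mf^2\, n/(n,mf^2).
\]
Then $\sum_q w(q)^2\varphi(q)$ is a sum over pairs $(m,f),(m',f')$ with equal lcm. We bound it by summing, over each divisor $g\mid n$ with $g=(n,mf^2)$, the quantity $\sum_{m,f} (n/g)/(m f^{0})$ restricted to $g\mid mf^2$. This needs care, since a naive bound yields a factor $n$ rather than $\tau(n)$.

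The cleaner route, which we would try first, is to avoid putting $\varphi(q)$ into the weight. Instead, apply Cauchy--Schwarz with the weight $1/\varphi(q)$ placed on the $E$-side. Using the trivial bound $|E|\ll Y\log^2x/\varphi(q)+\log^2 x$, we obtain the Barban--Davenport--Halberstam bound in the weighted form $\sum_q \varphi(q)\sum_b|E|^2$ over dyadic ranges of $q$. The weight factor then becomes
\[
\sum_{q}\frac{w(q)^2}{\varphi(q)}\ \ll\ \sum_{g\mid n}\ \sum_{\substack{m,f\\ (n,mf^2)=g}}\frac{g}{m^2f^3\,n}\cdot(\text{multiplicity}),
\]
and the count of divisors $g$ contributes the factor $\tau(n)$. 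After taking the square root this gives the $\tau(n)^{1/2}$ in the statement.

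Summing over $r$ and choosing $A=2M'$ in the Barban--Davenport--Halberstam bound then yields $\ll \tau(n)^{1/2}R\,x(\log x)^{1-M'}$. The loss of $\log x$ absorbs the factors $\log U$ and $\log L$ coming from the $m$- and $f$-sums. I expect the bookkeeping of the lcm weights, namely getting $\tau(n)^{1/2}$ rather than a power of $n$, to be the delicate step.
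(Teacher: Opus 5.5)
There is a genuine gap, and it is where you suspected: nothing in Step~4 produces $\tau(n)^{1/2}$. The damage is done in Step~3. After grouping by $q=[mf^2,n]$, you bound $\sum_b|E|$ by Cauchy--Schwarz over \emph{all} $\varphi(q)$ reduced residues modulo $q$. Already the terms with $f=1$ and $(m,n)=1$ (so $q=mn$ and $w(q)\ge 1/m$) give
\[
\sum_q w(q)^2\varphi(q)\ \ge\ \varphi(n)\sum_{\substack{m\le U\\ (m,n)=1}}\frac{\varphi(m)}{m^2},
\]
which is of order $n\log U$ up to $\log\log n$ factors. So you get $n^{1/2}$ where the statement has $\tau(n)^{1/2}$, and for $n$ as large as $x^C$ no log-power saving absorbs this.

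The ``cleaner route'' does not repair it. Shifting $\varphi(q)^{1/2}$ between the two Cauchy--Schwarz factors cannot change their product at a fixed dyadic scale. The valid pairing is $\sum_q w(q)\sum_b|E|\le(\sum_q w(q)^2/\varphi(q))^{1/2}(\sum_q\varphi(q)^2\sum_b|E|^2)^{1/2}$, with $\varphi(q)^2$ rather than $\varphi(q)$ on the $E$-side. For $q\asymp Q'$ this is essentially the same bound as before. Nor does the trivial bound $|E|\ll Y\log^2x/\varphi(q)+\log^2x$ give a BDH-quality estimate for $\sum_q\varphi(q)\sum_b|E|^2$. It only yields roughly $Q$ times the unweighted bound, with $Q$ as large as $x^{1/2+C}$.

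The missing idea is to keep the information that, for fixed $(m,f)$, only at most $m$ residues $b=b(r,a,f,k,n)$ occur, namely those from $a\bmod 4m$ with $a\equiv 1\bmod 4$, all satisfying $b\equiv k\bmod n$. You should not enlarge this to all residues modulo $q$. Cauchy--Schwarz over $a$ then costs only $m^{1/2}$, and against the weight $1/m$ the $m$-sum contributes $\sum_{m\le U}1/m\ll\log U$, independently of $n$. The paper does this, applying Cauchy--Schwarz in $r$, $m$, $a$ for each fixed $f$ and keeping $\sum_{f\le L}1/f\ll\log L$ outside the square root.

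To dominate the resulting $\sum_r\sum_m\sum_a|E|^2$ by the full sum $\sum_r\sum_{q\le nUL^2}\sum_{b\bmod q}|E(X_l,Y;r-1,q,b)|^2$ of \cite[Theorem~4]{BCD} without overcounting, split $m$ according to $d=(m,n)$. Then $(mf^2,n)=d\,(f^2,n/d)$ depends only on $d$ and $f$, so the modulus $[mf^2,n]$ determines $m$. Combined with your injectivity of $a\mapsto b$, the map $(m,a)\mapsto([mf^2,n],b)$ is injective on each class. Each of the $\tau(n)$ classes contributes at most one copy of the BDH sum, which puts $\tau(n)$ inside the square root.

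Your instinct that the divisors of $n$ produce the $\tau(n)$ is right, but they enter through this injectivity on the $E$-side, not through a weight factor. With this change, the rest of your outline goes through: $Q\le x^{1/2+C}L^3\le x(\log x)^{-N}$ is admissible, and you can choose the log-power saving freely.
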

\begin{proof}
Applying the Cauchy--Schwarz inequality successively to the sums over $r$, $m$, and $a$, we obtain 
\mult{
\su{|r| \le R\\ 2 \nmid r, r\neq 1} \cE_l(r)  
\ll 
R^{1/2}\su{f \le L\\2 \nmid f} \frac{1}{f}
\Biggl(\su{|r| \le R\\ 2 \nmid r, r\neq 1\\ (n,f^2) \mid 4k - r^2}
\su{m \le U} \frac 1 m 
\\ \cdot
\su{a \mod 4m\\ a \equiv 1 \pmod 4\\ (n, m f^2) \mid k - (r^2 - af^2)/4} 
\big| E (X_l, Y;r-1, [ m f^2,n], b)\big|^2 \Biggr)^{1/2}.
}
For each $d \mid n$, we group all $m \le U$ with $(m,n)=d$. Then, each $m \le U$ with $(m,n)=d$ gives rise to a different modulus $[mf^2,n]$, and thus to a different error term $E (X_l, Y;r-1, [ m f^2,n], b)$. To see this, note that for $m_1, m_2 \le U$ with $(m_1,n)=(m_2, n)=d$, the equation 
\eqs{
[m_1 f^2, n] = \frac{m_1 f^2 n}{(m_1 f^2, n)} = \frac{m_2 f^2 n}{(m_2 f^2, n)} = [m_2 f^2, n]
}
holds if and only if
\eqs{
\frac{m_1 f^2 n}{d (f^2, n/d)} = \frac{m_2 f^2 n}{d(f^2, n/d)} \iff m_1 = m_2.
}
Hence, for each fixed $f, r, d\mid n$, and $m$ with $(m,n)=d$, 
the map 
\[
a \pmod {4m} \longmapsto b(r, a, f, k, n) \pmod {[m f^2,n]}
\]
is injective. Therefore, we obtain
\mult{
\su{|r| \le R\\ 2 \nmid r, r\neq 1} \cE_l(r)  
	\ll 
	(R\log U)^{1/2}\su{f \le L\\2 \nmid f} \frac 1 f 
\Biggl(
\sum_{d \mid n} 
\su{|r| \le R\\ 2 \nmid r, r\neq 1\\ (n,f^2) \mid 4k - r^2} \\
\cdot \su{m \le U\\ (m,n)=d} \su{a \mod 4m\\ a \equiv 1 \pmod 4\\ (n, m f^2) \mid k - (r^2 - af^2)/4} 
\big| E (X_l, Y;r-1, [ m f^2,n], b)\big|^2\Biggr)^{1/2}}
\eqs{
\ll (R\log U)^{1/2}\log L\Biggl(\sum_{d \mid n}   \su{0<|r|\le R\\q\le nUL^2\\a\pmod q}\left|E(X_l, Y; r-1, q, a) \right|^2 \Bigg)^{1/2}.
}
There exists a constant $x_0= x_0 (M, N, C)$ such that for  $x> x_0$ and any $N\ge 2$, 
\eqs{
Q := nUL^2\ll x^{1/2+C}(\log x)^{3M}\le \frac{x}{(\log x)^{N}}.
}
We are now in a position where we can apply \cite[Theorem 4]{BCD} with $Q$ defined as above
, which gives us that for all sufficiently large $x$, 
\eqs{
\su{|r| \le R\\ 2 \nmid r, r\neq 1} \cE_l(r)  
\ll  (R\log x)^{1/2}(\log\log x)\biggl(  \frac{\tau(n) R x^2}{(\log x)^{2M'}} \biggr)^{1/2},
}
and the desired result follows.
\end{proof}

We turn to the evaluation of $\cT_l(r)$, where we restrict our attention to the values of $r$ satisfying $(n, k+1-r) = 1$ due to the definition of \eqref{resclass} and \eqref{twinconstant}. 
\begin{proposition} \label{prop:fixedr}
For fixed odd integer $r \neq 1$ such that $(n,  k+1-r ) = 1$,
\eqs{
\cT_l (r) = \cC_r +  O \Bigl( \frac{\tau(n)^2 (\log\log (3n))^2 \log \log (3|r|)}{\varphi (n) L^{0.63}}  \Bigr),
}
where $\cC_r$ is given by \eqref{Crfinal}.
\end{proposition}
The proof of Proposition \ref{prop:fixedr} is given in Section \ref{sec:proof-propfixedr} and follows by combining Propositions \ref{prop:Cr} and \ref{prop:Erx}. 

We will now state the principal result of this section, which is the culmination of our local analysis and the final result required to prove Theorem~\ref{thm:local}.  

\begin{proposition}
 \label{prop:asym4Flr}   
Let $\varepsilon>0$ and $C<1/2$. For any $M' > 1$, there exists $x_0 = x_0 (C, M', M, \eps)>0$ such that for any $x>x_0$, 
\multn{\label{asmy4Flr}
\su{|r| \le R\\ r \neq 1} \cF_l (r) = Y\ckn R  
+  O \biggl( \frac{\tau(n)^{1/2} R x}{(\log x)^{M'-1}} 
+ Y (\log\log (3n))^2 \log^2R 
\\
 + YR \frac{\log\log R \ \tau(n)^2 (\log\log (3n))^2}{\varphi (n) L^{0.63}} \biggr)
}
holds for all $ x^{1/3+ \eps}\le R \le x$ and $n\ll x^C$, where $\ckn$ is defined in  \eqref{def:cnk}.
\end{proposition}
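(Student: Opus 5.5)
The plan is to start from the decomposition \eqref{eq:Flr}, $\cF_l(r)=Y\cT_l(r)+\cE_l(r)$, and sum it over odd $|r|\le R$ with $r\neq 1$; for even $r$ we have $\cF_l(r)=0$ by definition. The error sum $\sum_r \cE_l(r)$ is handled directly by Proposition~\ref{prop:error-term-varE}. That result is available because $x^{1/3+\eps}\le R\le x$ and $n\ll x^C$ with $C<1/2$, and it gives exactly the first error term $\tau(n)^{1/2}Rx/(\log x)^{M'-1}$. So everything reduces to showing
\[
\su{|r|\le R\\ 2\nmid r,\ r\neq 1} \cT_l(r) = \ckn R + O\Bigl((\log\log (3n))^2\log^2 R + R\,\frac{\log\log R\,\tau(n)^2(\log\log(3n))^2}{\varphi(n)L^{0.63}}\Bigr).
\]

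First, I would discard the $r$ with $(n,k+1-r)>1$. For such $r$, every residue $b$ in \eqref{resclass} satisfies $b\equiv k \pmod n$ and $b-(r-1)\equiv k+1-r\pmod n$. Hence $(b-(r-1),[mf^2,n])>1$, so $\fS(r-1,[mf^2,n],b)=0$ by \eqref{twinconstant}, and $\cT_l(r)=0$.

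For the remaining $r$, I would apply Proposition~\ref{prop:fixedr} term by term, replacing $\cT_l(r)$ by $\cC_r$. Summing the error $\tau(n)^2(\log\log 3n)^2\log\log(3|r|)/(\varphi(n)L^{0.63})$ over $|r|\le R$ gives the third error term. What is left is the average
\[
\su{|r|\le R,\ 2\nmid r,\ r\ne 1\\ (n,k+1-r)=1}\cC_r .
\]
I expect $\cC_r$, given by \eqref{Crfinal}, to factor as a constant depending on $n,k$ times a product over primes $q$ of local factors depending only on $r \bmod q$ (the $\psi_r(q)$, $\eta_r(q)$, $Q_r(q)$ data), together with a factor depending on $r\bmod n$ (the $\cS(n;k)$-type piece). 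I would write $\cC_r=\kappa_{n,k}(r\bmod n)\,g(r)$, where $g$ is essentially multiplicative in the relevant local data. I would then express $g(r)=\sum_{d\mid \text{stuff}} h(d)\,\omega_d(r)$ as a convolution whose terms are periodic in $r$ modulo squarefree $d$, with $h$ supported on squarefree $d$ and $|h(d)|\ll d^{-2+\epsilon}$-type decay coming from the $1-O(q^{-2})$ shape of the factors.

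Swapping sums, each $d$ contributes $R$ times the average of $\kappa\,\omega_d$ over the residue classes modulo $[2,n,d]$, plus $O(\text{period}\cdot\text{sup})$. The main terms assemble into $\ckn R$, which is the definition \eqref{def:cnk}.

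The main obstacle is controlling the boundary errors uniformly in $n$, since the periods $[n,d]$ can be large. Rather than taking the period error naively, I would bound the tail $d>D$ using Rankin's trick. Choosing $D$ as a power of $R$ then leaves an error of order $(\log\log 3n)^2\log^2 R$: the $\log\log n$ factors come from $\prod_{q\mid n}(1+O(1/q))$, and the $\log^2 R$ comes from the divisor-type sums over $d\le D$. The delicate part is arranging the convolution so that the $n$-dependence (the congruence conditions $(n,mf^2)\mid k-(r^2-af^2)/4$) separates cleanly from the $q\nmid n$ factors. That separation is what makes the local averages factor into an Euler product.
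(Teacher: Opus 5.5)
Your reduction follows the paper. You discard $r$ with $(n,k+1-r)>1$, and your $\fS=0$ argument for this is correct. You handle $\sum_r\cE_l(r)$ by Proposition~\ref{prop:error-term-varE}, apply Proposition~\ref{prop:fixedr} term by term, and then average $\cC_r$. The averaging groups $r$ into classes $u \bmod n$, on which $\eta_r(q)$ and $Q_r(q)$ for $q\mid n$ are constant. It then expands the remaining factors at $q\mid r(r-1)(r-2)$, $q\nmid 2n$, as a sum over squarefree $d$, and swaps sums.

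\textbf{The gap is in the error analysis of this last step.} Your premise $|h(d)|\ll d^{-2+\epsilon}$ is false. In \eqref{Crfinal} the factor at $q\mid r-1$ is $\frac{q^2-q-1}{q^2-2q-2}=1+\frac{q+1}{q^2-2q-2}$, so $h(q)\asymp 1/q$ there. Only the factors at $q\mid r(r-2)$ have the shape $1+O(q^{-2})$, as in the Hardy--Littlewood singular series. With $|h(d)|\asymp 1/d$, a boundary error $O(\text{period}\cdot\sup)$ with period $[2,n]d$ costs about $d\,|h(d)|\gg 1$ for each $d$, hence $\gg D$ over $d\le D$. Meanwhile the discarded tail $\sum_{|r|\le R}\sum_{d\mid r(r-1)(r-2),\,d>D}|h_r(d)|$ has size about $R/D$. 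The best balance is an error of order $R^{1/2}$, so truncation at $D$ plus Rankin's trick cannot produce the $Y(\log\log 3n)^2\log^2R$ term.

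What is needed, and what the paper does, is to count by the number of residue classes rather than the period. Fix $u \bmod n$, a squarefree $d$, and an assignment $f_d$ recording whether each $q\mid d$ divides $r-1$ or $r(r-2)$. The admissible $s$ then form $\prod_{q\mid d}f_d(q)$ classes modulo $[2,n]d$. Hence the count is $\frac{(2,n)}{dn}R\prod_{q\mid d}f_d(q)+O\bigl(\prod_{q\mid d}f_d(q)\bigr)$, with error the number of classes rather than the period. No truncation in $d$ is then needed, since $d$ is automatically built from primes $\le R+2$.

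\begin{itemize}
\item The main terms give $R\frac{(2,n)}{n}\prod_{2<q\le R,\,q\nmid n}\bigl(1+\frac{H_1(q)+2H_2(q)}{q}\bigr)$. These factors are $1+O(q^{-2})$; the $q^{-2}$ decay appears only after averaging over $r$. So completing the product to all $q$ costs $O(1/n)$.
\item The errors total $\ll\prod_{2<q\le R}\bigl(1+2H_1(q)+4H_2(q)\bigr)\ll\log^2R$. This $\log^2R$ comes precisely from $H_1(q)\asymp 1/q$.
\end{itemize}

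Summing over $u \bmod n$ with the weights multiplies this error by $\cC\cS(n;k)$. The stated exponent in $(\log\log 3n)^2$ requires Lemma~\ref{lem:CSnkbound}, which uses the explicit values of $\cS(p^v;k)/p^v$ from Section~\ref{sec:fullEuler-product}. A generic bound $\prod_{q\mid n}(1+O(1/q))$ only gives an unspecified power of $\log\log 3n$.
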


By combining the average error term estimate from Proposition~\ref{prop:error-term-varE} with the asymptotic evaluations of the main terms $\cT_l(r)$ established in Proposition~\ref{prop:fixedr} and subsequently averaged over the interval $|r| \le R$, we obtain a complete asymptotic formula for the sum of the weighted prime pair functions $\cF_l(r)$. The proof of Proposition \ref{prop:asym4Flr} can be found in Section~\ref{sec:asym4Flr}.

\section{Proof of Theorem \ref{thm:local}}\label{subsec:proofthmlocal}
In this section, we prove Theorem \ref{thm:local} using Lemma \ref{lem:second-transform}.
\begin{proof}[Proof of Theorem \ref{thm:local}]
Let $\varepsilon>0$.
We first split the $r$-sum  in \eqref{eq:second-transform} at $Z := x^{1/3+\eps}$. 
Using the definition of $\cF_l (r)$ in \eqref{Fl(r)} and a crude estimate, we obtain: 
\bsc{\label{Flrcrudebound}
&\sum_{1\le l\le L-1} 
\frac{X_l}{\log^2 (X_{l+1})} \su{|r| \le Z} \cF_l(r) \sqrt{4X_l - r^2} 
\\
&\ll \sum_l X_l^{3/2} \sum_{|r| \le Z}
\su{m \le U\\f \le L} \frac 1 f \max_{b \mod mf^2} \su{
		p \in \cI_l \\
		p \equiv b \mod m f^2
} 1 
\ll Zx^{5/2}\log x.
}
Let $X = 2 \sqrt{X_l}$. Using Riemann-Stieltjes integration, we obtain
\eqs{
\su{Z < |r| \le X} \sqrt{X^2 - r^2} \cF_l(r) 
= \int_Z^X  \Bigl(\su{0 < |r| \le R} \cF_l(r) \Bigr) \frac{R}{\sqrt{X^2 - R^2}} dR. 
}
By \eqref{asmy4Flr}, for any $M'>1$,  we have
\multn{\label{largeR}
\su{Z < |r| \le X} \sqrt{X^2 - r^2} \cF_l(r)  = 
Y\ckn \int_Z^X  \frac{R^2}{\sqrt{X^2 - R^2}} dR  
\\
+  O \biggl( 
\frac{\tau(n)^{1/2}  xX_l}{(\log x)^{M'-1}} + Y X_l^{1/2} (\log\log n)^2 \log^2 X_l 
\\
+ Y X_l \frac{\log\log x \ \tau(n)^2 (\log\log n)^2}{\varphi (n) L^{0.63}} \biggr),
}
where we use the fact that $\int_Z^X  \frac{R}{\sqrt{X^2 - R^2}} dR\ll X$.
Note that using the formula

\[
\int_Z^X \frac{R^2}{\sqrt{X^2 - R^2}} \, dR=\frac{\pi}{4} X^2 - \frac{X^2}{2} \arcsin\left(\frac{Z}{X}\right) + \frac{1}{2} Z \sqrt{X^2 - Z^2}
\]
and the
Taylor expansion around  $0$, we get
\bsc{\label{largeRmainterm}
Y\ckn \int_Z^X  \frac{R^2}{\sqrt{X^2 - R^2}} dR  & = Y\ckn \frac{\pi X^2} 4 + O\Bigl( \frac{\ckn YZ^3}{X} \Bigr) \\
&= \pi Y\ckn X_l + O \Big( \frac{\ckn x^{1+3\eps}Y^{1/2}}{ l^{1/2}} \Big).
}


Using \eqref{Flrcrudebound}, we can rewrite \eqref{eq:second-transform} as 
\mult{
\su{p \le x\\ p \equiv k \mod n} \nu(p; \cP) 
= \frac 1 {2\pi} \sum_{1\le l\le L-1} \frac{X_l}{\log^2 (X_{l+1})} \su{Z < |r| \le 2\sqrt{X_l}} \cF_l(r) \sqrt{4X_l - r^2} 
\\
 + O \Bigl( Zx^{5/2}\log x + \frac{x^3}{L} \Bigl( \frac{\log^2 x}{\varphi(n)} + \log x \Bigr) \Bigr).
}
Combining \eqref{largeR} and \eqref{largeRmainterm}, the first term on the right of the equation above becomes
\multn{\label{eq:choice_of_n}
\frac 12 \ckn Y^3 \sum_{1 < l \le L} \frac{(l-1)^2}{\log^2 (Yl)}   
+ O \biggl( \frac{\tau(n)^{1/2}  x^3}{(\log x)^{M'-M+1}} + \ckn x^{5/2+3\eps}
\\
 +  x^{5/2}  (\log\log n)^2 \log^2 x 
+ x^3 \frac{\log\log x \tau(n)^2 (\log\log n)^2}{\varphi (n) L^{0.63} \log^2 x} 
\biggr).
}
By Proposition \ref{prop:Euler-product}, we have $\ckn=2C_{n, k}^{\Prime} \ll 1/\varphi(n)$. Therefore, 
 \eqs{
\frac 12 \ckn Y^3 \sum_{1 < l \le L } \frac {(l-1)^2}  {\log^2 (lY)} 
 = C_{n,k}^{\Prime}\frac{ x^3}{3 \log^2 x} +  O \Bigl(\frac{x^3}{\varphi(n) \log^3 x} \Bigr).
}
Using the assumption that $n\ll (\log x)^C$,  provided that $x, M$ and $M'$ are sufficiently large, we have
\bs{
\su{p \le x\\ p \equiv k \mod n} \nu(p; \cP) =C_{n,k}^{\Prime}\frac{ x^3}{3 \log^2 x} +  O \Bigl(\frac{x^3}{\varphi(n) \log^3 x} \Bigr).
}
This completes the proof of Theorem \ref{thm:local}.

\end{proof}

\section{Proof of Proposition \ref{prop:fixedr}} \label{sec:proof-propfixedr}


In this section, we divide the proof of Proposition \ref{prop:fixedr} into two main parts: first, in Section \ref{subsec:doublesum} we identify the main term $\cC_r$ of $\cT_l(r)$, and then in Section \ref{subsec:error term} we estimate the error term. 

We begin by rewriting $\cT_l (r)$ in \eqref{T(r)} as

\eqs{
\cT_l (r) 
 = \su{m \le U, f \le L\\2 \nmid f, (n,f^2) \mid 4k - r^2} 
	\frac{1}{mf} \fS (r-1, [m f^2,n], b) c_f^r (m),
}
where
\eqn{\label{eq:cfr-initial}
c_f^r (m) :=
\su{
	a \mod 4m\\ a \equiv 1 \pmod 4\\ (n, m f^2) \mid k - (r^2 - af^2)/4 \\
	(b(b-r+1),[mf^2,n])=1
	} \Bigl(\frac a m \Bigr),
}
with $b:=b(r, a, f, k, n)$ defined by \eqref{resclass}. Below the arithmetic properties of $c_f^r(m)$ needed for the analysis of the main term are established. 

\subsection{Properties of $c_f^r$}\label{subsec:propcfr}
The coprimality condition, $(b(b-r+1),[mf^2,n])=1$ on $a$, in \eqref{eq:cfr-initial} can be split into the following conditions:
\eqs{
(n,k(k-r+1)) = 1, \qquad  \Bigl(mf, \frac{r^2 - af^2} 4 \frac{(r-2)^2 - af^2}4 \Bigr) = 1,
}
where the latter implies that $(r,f)= 1 = (r-2,f)=1$. Thus, it can be replaced by the conditions
\eqs{
(m, 4^{-1}(r^2 - af^2)) = 1 = (m, 4^{-1} ((r-2)^2 - af^2)),
}
leading to the new expression

\eqn{\label{cfr(m)}
c_f^r (m) =
\su{
	a \mod 4m\\ a \equiv 1 \pmod 4\\ (n, m f^2) \mid k -  \frac{r^2 - af^2}{4}    \\
	(m, \frac{r^2 - af^2}{4}) = 1\\
    (m, \frac{(r-2)^2 - af^2}{4})= 1
	} \Bigl(\frac a m \Bigr),
}
which holds if $(n,k+1-r)=1$, and is zero otherwise.

The following lemma establishes the multiplicativity of $c_f^r(m)$ in $m$ and determines its local factors. 
\begin{lemma} \label{lem:cfr(m)}
Let $r \neq  1$ be an odd integer such that $(n,k+1-r) = 1$ and let $f$ be a positive odd integer satisfying $(r(r-2),f) = 1$ and $(n,f^2) \mid r^2 - 4k$.  
\begin{enumerate}
\item The function $c_f^r (m)$ is multiplicative in $m$. 

\item If $q \mid f$ and $v_q(n) \le 2v_q(f)$, then
    \eqn
{\label{2f >= n}
c_f^r(q^i) =
\begin{dcases}
    	0 & \text{if } 2 \nmid i, \\
	\varphi(q^i) & \text{if } 2 \mid i.
\end{dcases}
}

\item If $q \mid f$ and $v_q(n) > 2v_q(f)$, then necessarily $2v_q(f) \le v_q(r^2 - 4k)$ and 

\eqn{\label{n > 2f}
c_f^r (q^i) 
= 
\frac{q^{i+2v_q(f)}}{(q^{i+2v_q(f)}, n)} \cdot \begin{cases}
    \psi_r (q) & \text{if } 2v_q(f) = v_q (r^2-4k), \\
    0 & \text{otherwise,}
\end{cases}
}
where
\eqn{\label{psir}
\psi_r (q) = \Bigl( \frac{(r^2-4k)/q^{v_q (r^2-4k)}}{q} \Bigr).
}

\item If $q \nmid f$, then 
\eqs{
c_f^r (q^i) = 
\begin{dcases}
	(-2)^i /2  & \text{if } q=2 \text{ and } 2 \nmid n, \\
    \frac{(\eta_r (q) q)^i}{(n,q^i)} & \text{if } q \mid n,
\end{dcases} 
}
where  
\eqn{\label{etarq}
\eta_r (q) = \Bigl( \frac{r^2-4k}{q} \Bigr).
}

\item If $q \nmid 2nf$, 
\eqs{
\frac{c_f^r(q^i)}{q^{i-1}} =
\begin{dcases}
    q - 2, & \text{if } 2 \mid i, q \mid r(r-2)(r-1),\\
     -1, & \text{if }  2 \nmid i, q \mid r(r-2)(r-1),\\
     q - 3, & \text{if }  2 \mid i, q \nmid r(r-2)(r-1),\\
    -2, & \text{if }  2 \nmid i, q \nmid r(r-2)(r-1).
\end{dcases}		
}
\end{enumerate}
\end{lemma}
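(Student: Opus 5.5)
Write $m=\prod q^{i_q}$ with $q\mid m$, so that $4m=4\prod_q q^{i_q}$ when $m$ is odd, and $m=2^{i_2}m'$ otherwise. For (1) we use the Chinese remainder theorem. Every condition in \eqref{cfr(m)} localizes prime by prime:
\begin{itemize}
\item the Kronecker symbol factors as $\bigl(\frac am\bigr)=\prod_q\bigl(\frac a{q^{i_q}}\bigr)$;
\item the congruence $(n,mf^2)\mid k-(r^2-af^2)/4$ splits into the congruences modulo $q^{\min(v_q(n),\,i_q+2v_q(f))}$;
\item the two coprimality conditions $(m,\cdot)=1$ are conditions modulo each $q\mid m$.
\end{itemize}
Primes $q\mid n$ with $q\nmid m$ impose conditions only through $f$, and these are absorbed by the hypothesis $(n,f^2)\mid r^2-4k$. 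Hence for coprime $m_1,m_2$ the residue $a \bmod 4m_1m_2$ corresponds to a pair $(a\bmod 4m_1, a\bmod 4m_2)$ agreeing mod $4$. The sum therefore factors as $c_f^r(m_1)c_f^r(m_2)$. When $m$ is even one has to check that the $a\equiv1\pmod 4$ constraint is shared consistently; one treats the $2$-part modulo $4\cdot 2^{i}$ separately.

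Next we compute the local factors for $q$ odd, for which the sum runs over $a\bmod q^i$ (the mod-$4$ part contributes trivially). Put $u=(r^2-af^2)/4$ and $w=((r-2)^2-af^2)/4$.

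For (5), with $q\nmid 2nf$, the constraint is that $a\not\equiv r^2f^{-2}$ and $a\not\equiv (r-2)^2f^{-2}\pmod q$. The summand $\bigl(\frac aq\bigr)^i$ depends only on $a\bmod q$, so $c=q^{i-1}\sum_{a\bmod q}^{*}\bigl(\frac aq\bigr)^i$. The two excluded classes are nonzero squares. They coincide iff $q\mid r^2-(r-2)^2=4(r-1)$, and one is $0$ iff $q\mid r(r-2)$. Counting then gives the four cases:
\begin{itemize}
\item for even $i$, the count is $q-1$ minus the number of distinct nonzero excluded classes;
\item for odd $i$, the sum is $0$ minus the symbols of those classes, each of which equals $1$.
\end{itemize}
The $q=2$ case in (4) is a direct computation modulo $4\cdot 2^i$ using the character $\bigl(\frac a2\bigr)$, which depends on $a\bmod 8$.

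For (2) and (3), with $q\mid f$, the hypothesis gives $q\nmid r(r-2)$. Thus $u$ and $w$ are units mod $q$ automatically, and only the congruence condition and the symbol remain.
\begin{itemize}
\item If $v_q(n)\le 2v_q(f)$, the congruence modulo $q^{v_q(n)}$ holds automatically by the hypothesis on $f$. Then $c=\sum_{a\bmod q^i}\bigl(\frac a{q}\bigr)^i$, which is $\varphi(q^i)$ or $0$, giving \eqref{2f >= n}.
\item If $v_q(n)>2v_q(f)=:2e$, the condition reads $af^2\equiv r^2-4k\pmod{q^{\min(v_q(n),i+2e)}}$. Solvability forces $q^{2e}\mid r^2-4k$.
\end{itemize}
In the second case, dividing by $q^{2e}$ prescribes $a$ modulo $q^{\min(v_q(n),i+2e)-2e}$ as $(r^2-4k)/f^2\cdot q^{2e}$. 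The symbol $\bigl(\frac aq\bigr)^i$ needs $a \bmod q$ to be a unit, which requires $v_q(r^2-4k)=2e$. The prescribed class then has symbol $\psi_r(q)$, up to the square $(f/q^e)^2$; for odd $i$ we get $\psi_r(q)$, and for even $i$ we get $1$. We must reconcile this with the stated formula, perhaps using $\psi_r(q)^i$. The number of lifts to $a\bmod q^i$ is $q^{i+2e}/(q^{i+2e},n)$. If $v_q(r^2-4k)>2e$, then $a\equiv0\pmod q$ and the symbol vanishes.

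Case (4) with $q\mid n$, $q\nmid f$ works the same way: $a$ is fixed modulo $q^{\min(v_q(n),i)}$ as $r^2-4k$, giving $\eta_r(q)^i$ times $q^i/(n,q^i)$ lifts. Here $\eta_r(q)^iq^i/(n,q^i)$ matches the stated formula. The coprimality conditions with $u=k$ and $w=k+1-r$ hold automatically because $(n,k(k+1-r))=1$.

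The main obstacle I expect is bookkeeping. This includes the $2$-adic factor in (1) and (4), and making sure the congruence conditions at $q\mid n$ with $q\nmid m$ do not break multiplicativity. The parity convention in (3) also needs care.
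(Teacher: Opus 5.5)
Your proposal is correct and takes essentially the paper's route. You use CRT for multiplicativity, then evaluate at prime powers: at $q\mid n$ the congruence fixes $a$ modulo a power of $q$ and counting lifts gives $q^{i+2v_q(f)}/(q^{i+2v_q(f)},n)$. The coprimality conditions are automatic, from $(n,k(k+1-r))=1$ or, when $q\mid f$, from $q\nmid r(r-2)$. The only real difference is that your direct counts in (5) and at $q=2$, $2\nmid n$ replace the paper's citation of \cite[Lemma 16]{BCD}.

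On the point you left open in (3), there is nothing to reconcile. Your value $\psi_r(q)^i\,q^{i+2v_q(f)}/(q^{i+2v_q(f)},n)$ is exactly what the paper's own proof derives, and it is what the paper later uses when computing $Q_r(q)$. The bare $\psi_r(q)$ in the displayed statement is therefore a typo; taken literally, the statement fails for even $i$ whenever $\psi_r(q)=-1$.
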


\begin{proof}
To see multiplicativity, suppose $(m_1, m_2)=1$ and $(n, m_if^2)\mid (k-(r^2-a_if^2)/4)$. Let $b$ denote the unique class modulo $[4m_1, 4m_2] = 4m_1m_2$ for which $b \equiv a_i \mod 4m_i$ with $a_i = 1 \mod 4$ for $i=1,2$. 
Since $(n,m_if^2)  \mid (a_i f^2 - b f^2)/4$, it follows that $(n,m_i f^2) \mid (k- (r^2 - bf^2)/4)$. Thus,
\eqs{
(n,m_1m_2 f^2) = [(n,m_1 f^2), (n,m_2 f^2)] \mid k- (r^2 - bf^2)/4 .
}
The other conditions in \eqref{cfr(m)} follow from the fact that 
\[
(m_1m_2, \star) = (m_1, \star)(m_2,\star).
\]
This completes the proof of (1).

For $q \nmid n$, since $(n, f^2)\mid r^2-4k$ and $f$ is odd, the condition $(n,qf^2)  \mid (k - (r^2-af^2)/4)$ holds naturally. Therefore, part (5) follows from the evaluation in \cite[Lemma 16 parts (3)-(5)]{BCD}. 

From now on, we assume that $q \mid n$. Given a prime $q$, write $f = q^{v_q(f)} h$, where $q \nmid h$. Since $(n,h^2) \mid (n,f^2)\mid k- (r^2 - af^2)/4$, the first divisibility condition on $a$ is equivalent to 
\eqs{
	(n,q^{i+2v_q(f)}) \mid k- (r^2 - af^2)/4.
}
Furthermore, note that if
\eqs{
	q \mid \frac{r^2 - af^2}{4}\cdot    \frac{(r-2)^2 - af^2}{4},
}
then $q \nmid n$ since otherwise we would get $q \mid (n, k(k-r+1))=1$. Therefore, since $q \mid n$, we can simplify $c_f^r (q^i)$ as

\eqs{
c_f^r (q^i) = 
\su{
	a \mod 4q^i \\
	a \equiv 1 \pmod 4 \\
	(n, q^{i+2v_q(f)})\mid k- (r^2 - af^2)/4  
	} 
	\Bigl(\frac a q \Bigr)^i.
}

If $q \mid f$, then $q \mid (n,f^2) \mid r^2 - 4k$, and hence $q$ is odd. Furthermore, 
\eqs{
\min \{ v_q(n), 2v_q(f) \} \le v_q(r^2 - 4k),
}
and the last condition on $a$ is equivalent to 
\eqs{
(n, q^{i+2v_q(f)}) \mid 4k - r^2 + a q^{2v_q(f)} h^2.
}
If $v_q(n) \le 2v_q(f)$, then the last condition on $a$ trivially holds. Therefore, part (2) follows from \cite[Lemma 16 part (4)]{BCD}.
If $v_q(n) > 2v_q(f)$, then necessarily $2v_q(f) \le v_q(r^2 - 4k)$ and
\eqs{
c_f^r (q^i) 
= 
\su{
		a \mod 4q^i, a \equiv 1 \pmod 4 \\
	(nq^{-2v_q(f)}, q^i) \mid \frac{4k- r^2}{q^{2v_q(f)}}  + ah^2 
	} 
	\Bigl(\frac {ah^2} q \Bigr)^i 
= \Bigl( \frac{\frac{r^2-4k}{q^{2v_q(f)}}}{q} \Bigr)^i \frac{q^{i+2v_q(f)}}{(q^{i+2v_q(f)}, n)} ,
}
which is $0$ unless $2v_q(f) = v_q(r^2 - 4k)$. Hence, part (3) of the lemma follows. 

Finally, if $q \nmid f$ and $q\neq 2$, then 
\eqs{
c_f^r (q^i) 
= \su{
	a \mod 4q^i \\
	a \equiv 1 \pmod 4 \\
	(n, q^i )\mid 4k - r^2 + af^2  
	} 
	\Bigl(\frac {af^2} q \Bigr)^i = 
 \Bigl( \frac{r^2-4k}{q} \Bigr)^i \frac{q^i}{(n,q^i)}.
}
If $2 \mid n$ and $q=2\mid f$, then
\eqs{
c_f^r (2^i) = 
 \su{a \mod 2^{i+2}, a \equiv 1 \pmod 4 \\
	4(n, 2^i)\mid af^2 - (r^2 - 4k)  } \Bigl(\frac {af^2} 2 \Bigr)^i 
= \Bigl( \frac{r^2-4k}{2} \Bigr)^i 
\frac{2^i}{(n,2^i)} =  \frac{(-2)^i}{(n,2^i)}
}
since $2 \nmid k$ implies $r^2 - 4k \equiv 5 \mod 8$. Therefore, part (4) holds. 

This completes the proof of the Lemma.
\end{proof}


\subsection{Factorization of the double sums}\label{subsec:doublesum}

Next, we will give an Euler product representation of the main term $\cC_r$ of $\cT_l(r)$ in Proposition \ref{prop:Cr}. 
To see where the main term comes form, we insert the twin prime constant defined in  \eqref{twinconstant} into \eqref{T(r)} and rearrange terms to find that
\eqn{\label{incompletesum}
\cT_l(r) = \frac 2 {\varphi (n)} \pr{q \mid (r-1)n\\ q \neq 2} \frac{q-1}{q-2} \prod_{q \neq 2} \frac{q(q-2)}{(q-1)^2} 
\su{m \le U\\f \le L\\ (f,2r(r-2))=1\\(n,f^2) \mid 4k - r^2} H(m,f,2,3;c_f^r),
}
where the new condition $(f, 2r(r-2))=1$, implied by the properties of $c_f^r(m)$, has been incorporated into the $f$-sum. For any $\beta, \gamma > 0$, we define 

\multn{\label{eq:definition-H}
H(m,f,\beta, \gamma;F) :=
	\frac { \varphi((mf^2,n))F(m)} 
	{\varphi(m^\beta) \varphi (f^\gamma)   } 
	\prod_{q \mid (m,f)} \frac{q-1} q
	\\
\cdot	\pr{q \mid m \\q \neq 2} \frac{q-1}{q-2}
	\pr{q \mid (m,(r-1)fn) \\q \neq 2} \frac{q-2}{q-1}
	\pr{q \mid f \\q \neq 2} \frac{q-1}{q-2}
	\pr{q \mid (f,(r-1)n) \\q \neq 2} \frac{q-2}{q-1},
}
with $\varphi(m^\beta):=m^\beta \prod_{p\mid m} (1 - 1/p)$. 

The main term is obtained by completing the truncated sums over $m$ and $f$. Namely, we define
\multn{\label{Cr}
\cC_r := \frac 2 {\varphi (n)} \pr{q \mid (r-1)n\\ q \neq 2} \frac{q-1}{q-2} \prod_{q \neq 2} \frac{q(q-2)}{(q-1)^2}  \\
\cdot \su{m, f \ge 1\\ (f,2r(r-2))=1\\(n,f^2) \mid 4k - r^2} H(m,f,2,3;c_f^r).
}
Therefore, $\cC_r$ differs from $\cT_l(r)$ only through the tails introduced by extending the ranges $m\le U$ and $f\le L$ to infinity. 
We write the error as
\eqn{\label{eq:Erx}
E_r(x):=\cT_l(r)-\cC_r.
}
The contribution of $E_r(x)$ will be estimated in the next section. For the remainder of this section, we focus on the arithmetic structure of $\cC_r$ and express it as an Euler product.

To separate the interaction between $m$ and $f$, we fix $f$ and decompose $m$:
\[
m=m's,
\qquad
(m',f)=1,
\qquad
s\mid f^\infty,
\]
where $s\mid f^\infty$ means that every prime divisor of $s$ also divides $f$. Substituting this decomposition into \eqref{Cr}  and using the letter $m$ again instead of $m'$ gives
\eqn{\label{mfsumfactorazion}
\su{m, f \ge 1\\ (f,2r(r-2))=1\\(n,f^2) \mid 4k - r^2} H(m,f,\beta,\gamma; F) = 
\su
{
	f \ge 1\\(2r(r-2),f)=1\\ (n,f^2) \mid 4k - r^2  
}
\frac {M(f,\beta; F) S(f,\beta;F) P(f)} { \varphi (f^\gamma)}, 
}
where 

\bsc{\label{M(f)}
M(f,\beta;F) &=  
	\su{(m,f)=1} \frac { \varphi((m,n)) F(m)} 
	{\varphi(m^\beta)  } 
	\pr{q \mid m \\q \neq 2} \frac{q-1}{q-2}
	\pr{q \mid m\\q \mid (r-1)n \\q \neq 2} \frac{q-2}{q-1},
	\\
P(f) &= \pr{q \mid f} \frac{q-1}{q-2}
\pr{q \mid (f,(r-1)n)} \frac{q-2}{q-1}, \\
S(f,\beta;F) &= \su{s \mid f^\infty }
	\frac { \varphi((sf^2,n)) F(s)} 
	{  s^\beta    }.
}

Since all terms of $M(f,\beta;F)$ are multiplicative in $m$, we can factor it as 
\eqs{ 
M(f,\beta;F) 
= M_2(f,\beta;F) \prod_{q \nmid 2nf} M_q (f,\beta;F) \pr{q \mid n \\ q \nmid 2f} M_q (f,\beta;F), 
}
where for any prime $q$, 
\eqs{
M_q (f,\beta;F) = 1 + \sum_{i \ge 1} \frac { \varphi((q^i,n)) F(q^i)} 
{\varphi(q^{\beta i})  } 
\pr{\ell \mid q \\\ell \neq 2} \frac{\ell-1}{\ell-2}
\pr{\ell \mid (q,(r-1)n) \\\ell \neq 2} \frac{\ell-2}{\ell-1}.
}
Let 
\bsc{\label{A_r(q)}
A_r (q, \beta; i)= 1 +
\begin{dcases}
	\frac {q-3 + (-1)^i 2 q^{ \beta-1 }}{(q-2) (q^{2(\beta-1)} -1)}   & \text{if } q \nmid r(r-1)(r-2),\\
	\frac {q-2 + (-1)^i   q^{ \beta-1 }}{(q-1) (q^{2(\beta-1)} -1)}  
	& \text{if } q \mid r-1, \\
	\frac {q-2 + (-1)^i  q^{ \beta-1 }}{(q-2) (q^{2(\beta-1)} -1)}  
	& \text{if } q \mid r(r-2).
\end{dcases} 
}
Using Lemma \ref{lem:cfr(m)}, we can derive the following via a routine computation:  
\begin{enumerate}
\item[(i)] For $q=2$, 
\eqs{
M_q (f,\beta;F)  =  \begin{dcases}
	 \frac 1 {1 + 2^{1-\beta}}    & \text{if } F = c_f^r,\\
	 \frac 1 {1 - 2^{1-\beta}}    & \text{if } F = |c_f^r|.
\end{dcases}
}
\item[(ii)] For $q \nmid 2nf$, 
\eqs{
M_q (f,\beta;F) = \begin{dcases}
	A_r (q, \beta;1) &\text{if } F = c_f^r, \\
	A_r (q, \beta;2) &\text{if } F = |c_f^r|. 	 
\end{dcases}
}

\item[(iii)] For $q \mid n$ but $q \nmid 2f$, 
\eqs{
M_q (f,\beta;F)  = 
\begin{dcases}
	\Bigl(1- \frac { \eta_r (q) } { q^{ \beta-1  }   }\Bigr)^{-1}    & \text{if } F = c_f^r,\\
	\Bigl(1- \frac { \eta_r (q)^2} { q^{ \beta-1  }   }\Bigr)^{-1}  & \text{if } F = |c_f^r|.
\end{dcases}
}

\end{enumerate}

As a result, it follows that $\prod_{q \nmid 2nf} M_q (f,\beta;F) \ll \zeta(2\beta-2) $ for $\beta \in (3/2,2]$, hence
\eqn{\label{M(f)bound}
M(f, \beta; |c_f^r|) 
\ll  \frac 1 {2\beta-3} \pr{q \mid n} \Bigl(1- \frac 1 { q^{ \beta-1  }   }\Bigr)^{-1}, }
and that
\multn{\label{Mfactored}
M(f,2;c_f^r) 
= \frac 2 3 \prod_{q \nmid 2nf} A_r (q,2;1) \pr{q \mid n \\ q \nmid 2f} \Bigl( 1- \frac { \eta_r (q) } q  \Bigr)^{-1}, \\
= \frac 2 3 \prod_{q \nmid 2n} A_r (q,2;1) 
\pr{q \mid n \\ q \nmid 2} \Bigl( 1- \frac { \eta_r (q) } q  \Bigr)^{-1} 
\pr{q \mid f\\ q \nmid 2n} A_r (q,2;1)^{-1}.
}

\begin{lemma}\label{lem:mult-S-f}
The function $S(f, \beta; c_f^r)$ is multiplicative in $f$.  
\end{lemma}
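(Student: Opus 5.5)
The plan is to show that $S(f,\beta;c_f^r)$ factors as a product over the primes $q\mid f$ of local sums, each depending on $f$ only through $q^{v_q(f)}$. Multiplicativity in $f$ then follows at once. Throughout, $f$ ranges over the positive odd integers with $(f,2r(r-2))=1$ and $(n,f^2)\mid 4k-r^2$; if $(n,k+1-r)\neq 1$ then $c_f^r\equiv 0$ and $S(f,\beta;c_f^r)$ is simply the value at $s=1$ of the (then trivial) sum, which is handled separately.

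First I check that these side conditions pass to coprime factors. If $f=f_1f_2$ with $(f_1,f_2)=1$, then $(f_j,2r(r-2))=1$ trivially, and $(n,f_j^2)\mid(n,f^2)\mid 4k-r^2$. Hence Lemma \ref{lem:cfr(m)} applies to $c_{f_1}^r$, $c_{f_2}^r$ and $c_f^r$ alike.

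Second, I verify absolute convergence so that the sum can be rearranged. From Lemma \ref{lem:cfr(m)}(2),(3) we have $|c_f^r(q^i)|\le q^{i+2v_q(f)}$, and $\varphi((sf^2,n))\le n$. So the sum over $s\mid f^\infty$ is dominated by $\prod_{q\mid f}\sum_i q^{i(1-\beta)}$ (times a constant depending on $f,n$), which converges for $\beta>1$; in our application $\beta\in(3/2,2]$.

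Third, write $s=\prod_{q\mid f}q^{i_q}$. By Lemma \ref{lem:cfr(m)}(1), $c_f^r(s)=\prod_{q\mid f}c_f^r(q^{i_q})$. Since $q\mapsto q^{v_q(\cdot)}$ splits gcds, $(sf^2,n)=\prod_{q\mid f}(q^{i_q+2v_q(f)},n)\cdot(\text{part coprime to } f)$. Here I must be careful: $(sf^2,n)$ is supported only on primes dividing $f$, so it equals $\prod_{q\mid f}(q^{i_q+2v_q(f)},n)$, and $\varphi$ of it factors accordingly. This gives
\[
S(f,\beta;c_f^r)=\prod_{q\mid f}\ \sum_{i\ge0}\frac{\varphi\bigl((q^{i+2v_q(f)},n)\bigr)\,c_f^r(q^i)}{q^{i\beta}}.
\]

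The main point, which I expect to be the only real obstacle, is that the local value $c_f^r(q^i)$ for $q\mid f$ depends on $f$ only through $v_q(f)$. Parts (2) and (3) of Lemma \ref{lem:cfr(m)} give exactly this. In case (2) the value is $0$ or $\varphi(q^i)$. In case (3) the value is $\frac{q^{i+2v_q(f)}}{(q^{i+2v_q(f)},n)}\psi_r(q)$ or $0$, and the case distinction involves only $v_q(n)$, $v_q(f)$ and $v_q(r^2-4k)$. In the proof of that lemma the cofactor $h=f/q^{v_q(f)}$ disappeared via $\bigl(\frac{ah^2}{q}\bigr)=\bigl(\frac{a}{q}\bigr)$, so I would re-check this cancellation for $i$ odd. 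Consequently the $q$-factor above is the same whether computed for $f$ or for $q^{v_q(f)}$ (equivalently for whichever $f_j$ contains $q$). Hence $S(f_1f_2,\beta;c_{f_1f_2}^r)=S(f_1,\beta;c_{f_1}^r)\,S(f_2,\beta;c_{f_2}^r)$, and $S(1)=1$.
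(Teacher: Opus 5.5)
Your proposal is correct and follows the paper's argument. The paper splits off one prime power $f=p^t$ against a cofactor $g$ with $p\nmid g$, and uses $c_{fg}^r(p^e s')=c_f^r(p^e)\,c_g^r(s')$ together with the splitting of $\varphi((s(fg)^2,n))$; this is exactly your observation that the local values at $q\mid f$ depend only on $v_q(f)$, except that you factor over all $q\mid f$ at once and also make explicit the absolute convergence and the stability of the side conditions, which the paper leaves implicit. Your aside on $(n,k+1-r)\neq 1$ is muddled but harmless, since there $c_f^r\equiv 0$ and the identity holds trivially.
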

\begin{proof}
Let  $f=p^{t}$ for a prime $p$, and $g$ be an integer not divisible by $p$. By definition, 
\eqs{
S(fg, \beta; c_{fg}^r) = \sum_{s\mid (fg)^\infty } \frac{\varphi((s(fg)^2, n)) c_{fg}^r(s)}{s^\beta}.
}
By writing $s=s'p^e$ where $s' \mid g^\infty$, we have
\eqs{
c_{fg}^r (p^e s') = c_{fg}^r (p^e) c_{fg}^r (s') = c_f^r (p^e) c_g^r (s').
}
Thus, we get 
\bs{
S(fg, \beta; c_{fg}^r) 
&=\su{e \ge 0, s' \mid g^\infty} \frac{\varphi((p^e f^2, n)) \varphi((s'g^2, n)) c_f^r (p^e) c_g^r (s')}{(p^es')^\beta},
\\
&=S(f, \beta; c_f^r) S(g, \beta; c_g^r). 
}
A similar argument shows that $S(f, \beta; |c_f^r|)$ is multiplicative. 
\end{proof}

\begin{proposition}\label{prop:Cr}
Recall \eqref{etarq} and \eqref{Cr}. Then, we have 
\multn{
\label{Crfinal}
\cC_r = 
\frac {4} {3\varphi (n)}  
\prod_{q \neq 2} \frac{q^2(q^2 - 2q -2)}{(q+1)(q-1)^3} 
\pr{q \mid n \\ q \neq 2}
\frac{(q-1)(q^2-1)}{q(q^2 - 2q -2)}
\pr{q \mid n \\ q \nmid 2r(r-2)}  Q_r (q) 
\\
\cdot 
\pr{q \mid r(r-2)\\ q \nmid 2n} \frac{q^2 - 2 q - 1}{q^2 - 2 q - 2} 
\pr{q \mid  r-1 \\ q \nmid 2n}
\frac{q^2-q-1}{q^2 - 2q -2}
\pr{q \mid n\\q \neq 2} 
\Bigl(1- \frac{ \eta_r (q) } q \Bigr)^{-1},
}
where 
\begin{equation}\label{Qr}
Q_r(q)=\begin{cases}
1 & \text{if } \nu_r =0, \\[2mm]
\dfrac{q(q^{\ceil{\nu_r/2}}-1)}
 {q^{\ceil{\nu_r/2}} (q-1)} + \dfrac{\delta_{2 \mid \nu_r} q }{q^{\nu_r/2} (q - \psi_r (q))}  & \text{if } 
 0 < \nu_r < v, 
 \\[3mm]
\dfrac{q(q^{\ceil{v/2}}-1)}{q^{\ceil{
v/2}} (q-1)} + \dfrac{q^{2+v-3\ceil{v/2}}}{q^2-1}  & \text{if } \nu_r \ge v,
\end{cases}
\end{equation}
with $v:=v_q (n)$ and $\nu_r = v_q (r^2 - 4k)$.

\end{proposition}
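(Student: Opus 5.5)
Starting from \eqref{Cr} and the factorization \eqref{mfsumfactorazion} with $\beta=2$, $\gamma=3$, $F=c_f^r$, I would write
\[
\cC_r=\frac{2}{\varphi(n)}\pr{q\mid (r-1)n\\ q\neq 2}\frac{q-1}{q-2}\prod_{q\neq 2}\frac{q(q-2)}{(q-1)^2}\su{f\ge1\\(f,2r(r-2))=1\\(n,f^2)\mid 4k-r^2}\frac{M(f,2;c_f^r)S(f,2;c_f^r)P(f)}{\varphi(f^3)} .
\]
I would then substitute \eqref{Mfactored} for $M(f,2;c_f^r)$. This pulls out the $f$-independent factor $\frac23\prod_{q\nmid 2n}A_r(q,2;1)\prod_{q\mid n,\,q\ne2}(1-\eta_r(q)/q)^{-1}$, which accounts for the $4/3$ and the last product in \eqref{Crfinal}. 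What remains is the $f$-dependent weight $g(f):=S(f,2;c_f^r)P(f)\varphi(f^3)^{-1}\prod_{q\mid f,\,q\nmid 2n}A_r(q,2;1)^{-1}$. By Lemma~\ref{lem:mult-S-f}, and since the other factors are visibly multiplicative, $g$ is multiplicative. The constraints $(f,2r(r-2))=1$ and $(n,f^2)\mid 4k-r^2$ are local conditions on each $v_q(f)$. So the $f$-sum is an Euler product $\prod_{q\nmid 2r(r-2)}\bigl(1+\sum_{t\ge1}g(q^t)\bigr)$, restricted for $q\mid n$ to those $t$ with $\min(v,2t)\le \nu_r$. Absolute convergence follows from \eqref{M(f)bound} together with $|c_f^r(q^i)|\ll q^i$.

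For the local factors I would split into three cases. For $q\nmid 2n$ and $q\nmid r(r-2)$, the condition $q\mid f$ forces $v_q(n)=0$, so part (2) of Lemma~\ref{lem:cfr(m)} applies: $c_f^r(q^i)$ vanishes for odd $i$ and equals $\varphi(q^i)$ for even $i$. Then $S(q^t,2)=\sum_{j\ge0}\varphi(q^{2j})q^{-4j}$ is a geometric series, and the sum over $t$ of $1/\varphi(q^{3t})$ is geometric as well. I would combine this local factor with $A_r(q,2;1)$ (each of its three cases, according to whether $q\mid r-1$), with $P$, and with the twin-prime factor $q(q-2)/(q-1)^2$, and check that the product equals $q^2(q^2-2q-2)/((q+1)(q-1)^3)$, times $(q^2-q-1)/(q^2-2q-2)$ when $q\mid r-1$. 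For $q\mid r(r-2)$, $q\nmid 2n$, there is no $f$-sum at $q$, and $A_r$ together with the twin factor should give the generic factor times $(q^2-2q-1)/(q^2-2q-2)$. These two cases are routine verifications.

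The main obstacle is the case $q\mid n$, $q$ odd, $q\nmid r(r-2)$. Here the $f$-sum at $q$ runs over $t$ with $2t\le\nu_r$, or all $t$ with $2t\ge v$ when $\nu_r\ge v$. The value of $S(q^t,2;c^r_{q^t})$ depends on whether $v\le 2t$, which falls under part (2) of Lemma~\ref{lem:cfr(m)}, or $v>2t$, which falls under part (3) and brings in $\psi_r(q)$ and the gcd factor $q^{i+2t}/(q^{i+2t},n)$, together with the weight $\varphi((q^{i+2t},n))$. I would split the $t$-sum at $\lceil v/2\rceil$ and at $\lceil \nu_r/2\rceil$. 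The terms with $2t<\nu_r$ and $2t<v$ give $c=0$ for $i\ge1$ by part (3), since $2t\ne\nu_r$. They contribute a finite geometric sum, which should produce $q(q^{\lceil\cdot/2\rceil}-1)/(q^{\lceil\cdot/2\rceil}(q-1))$ after normalization. The single term $2t=\nu_r<v$ gives a series in $\psi_r(q)/q$, which yields $\delta_{2\mid\nu_r}q/(q^{\nu_r/2}(q-\psi_r(q)))$. When $\nu_r\ge v$, the tail $2t\ge v$ uses part (2) and sums to $q^{2+v-3\lceil v/2\rceil}/(q^2-1)$. I would normalize by dividing by the $q\mid n$ factor $(q-1)(q^2-1)/(q(q^2-2q-2))$ times the generic one, which isolates exactly $Q_r(q)$. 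This bookkeeping is error-prone, so I would verify it carefully against the case $v=1$.

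Finally, I would check that the primes $q\mid n$ with $q\mid r(r-2)$ contribute no $Q_r$ and that the factor at $q=2$ matches. Assembling all the cases gives \eqref{Crfinal}.
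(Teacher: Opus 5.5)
Your proposal is correct and follows essentially the same route as the paper. You insert \eqref{Mfactored}, factor the remaining $f$-sum via Lemma~\ref{lem:mult-S-f} into the generic product over $q\nmid 2nr(r-2)$ (the paper's $Q$) times the local factors $Q_r(q)$ at $q\mid n$, $q\nmid 2r(r-2)$, and then evaluate those factors with parts (2) and (3) of Lemma~\ref{lem:cfr(m)}, splitting into $2t<\min(v,\nu_r)$, $2t=\nu_r<v$, and $2t\ge v$ when $\nu_r\ge v$; the local identities you plan to verify do hold, provided that for $q\mid r-1$, $q\nmid 2n$ you also include the prefactor $(q-1)/(q-2)$ from $\prod_{q\mid (r-1)n}$ in the check.
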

\begin{proof}
Inserting \eqref{Mfactored} into 
\eqref{mfsumfactorazion}, we find that
\multn{\label{eq:prodofH}
\su{m, f \ge 1\\ (f,2r(r-2))=1\\(n,f^2) \mid 4k - r^2} H(m,f,2,3; F) = 
\frac 2 3 \prod_{q \nmid 2n} A_r (q,2;1) 
\pr{q \mid n \\ q \nmid 2} \Bigl( 1- \frac { \eta_r (q) } q  \Bigr)^{-1} 
\\
\cdot \su
{
	f \ge 1\\(2r(r-2),f)=1\\ (n,f^2) \mid 4k - r^2  
}
\frac {S(f,2;c_f^r) P(f)}
{\varphi (f^3)}
\pr{q \mid f\\ q \nmid 2n} A_r (q, 2;1)^{-1}, }
where the $f$-sum can be factored using   
Lemma \ref{lem:mult-S-f} as 
\eqs{
\su
{
	f \ge 1\\(2r(r-2),f)=1\\ (n,f^2) \mid 4k - r^2  
}
\frac {S(f,2;c_f^r) P(f)}
{\varphi (f^3)}
\pr{q \mid f\\ q \nmid 2n} A_r (q, 2;1)^{-1} = Q \pr{q \mid n \\ q \nmid 2r(r-2)}  Q_r (q),
}
with 
\bs{
Q&= \pr{q \nmid 2nr(r-2)} \Bigl(
1 + 
A_r (q,2;1)^{-1} \frac{q}{q-1}	\su{j \ge 1} 
\frac{S(q^j,2;c_{q^j}^r) P(q^j)} {q^{3j}}  
\Bigr), \\
Q_r (q) & = 1 + \su{j \ge 1\\ (n,q^{2j}) \mid 4k-r^2 }  \frac 1 {q^{3j}} \su{i \ge 0} 
	\frac { (q^{i+2j},n) c_{q^j}^r(q^i)}{q^{2i}}.
}
For $Q$, recalling the definition of $S(q^j,\beta;F)$ and using \eqref{2f >= n}, we can show that
\eqs{
S(q^j,\beta;F)  = \sum_{i \ge 0} 
	\frac { \varphi((q^{i+2j},n)) F(q^i)}{q^{i\beta }} = 1 + \frac{q-1}{q(q^{2(\beta-1)}-1)}.
}
Combining this with \eqref{A_r(q)}, it follows that
\eqs{
Q   =  
\pr{q \nmid 2nr(r-1)(r-2)} \frac{q(q^2 - 2q -2)}{q^3-2q^2-2q-1}
\pr{q \mid r-1\\ q \nmid 2n
} \frac{q(q^2-q-1)}{q^3-q^2-q-1}.
}
Putting the factored form of \eqref{eq:prodofH} into the definition of $\cC_r$ in \eqref{Cr}, we obtain
\multn{\label{eq:newCr}
\cC_r = \frac {4Q} {3\varphi (n)} \prod_{q \neq 2} \frac{q(q-2)}{(q-1)^2}  \pr{q \mid (r-1)n\\ q \neq 2} \frac{q-1}{q-2}  
\\
\cdot 
\pr{q \mid n \\ q \nmid 2r(r-2)}  Q_r (q) \pr{q \nmid 2n} A_r (q, 2;1) 
\pr{q \mid n\\q \neq 2} 
\Bigl(1- \frac{ \eta_r (q) } q \Bigr)^{-1}.
}
Using \eqref{A_r(q)} once more, we find that
\bs{
Q  \pr{q \nmid 2n} A_r (q, 2;1)  &= 
\pr{q \nmid 2n} \frac{q(q^2 - 2q -2)}{(q-2)(q^2-1)}
\pr{q \mid r(r-2)\\ q \nmid 2n} \frac{q^2 - 2 q - 1}{q^2 - 2 q - 2} \\
& \qquad \cdot \pr{q \mid  r-1 \\ q \nmid 2n}
\frac{(q-2)(q^2-q-1)}{(q - 1)(q^2 - 2q -2)}.
}
By inserting the above into \eqref{eq:newCr} and simplifying, we end up with \eqref{Crfinal}.

As for $Q_r (q)$, if $\nu_r = 0$, then $Q_r (q) = 1$. Otherwise, we assume that $r \equiv u \mod n$ and consider the remaining cases. If $v > \nu_r > 0$, then
\eqs{v_q (r^2 - 4k) = v_q (u^2 - 4k), \qquad \psi_r (q) = \psi_u (q)\neq 0.}
By \eqref{n > 2f}, it follows that 
\bs{ 
Q_r (q) &= \su{0 \le j < \nu_r/2 }  \frac {q^{2j}} {q^{3j}} + \frac {\delta_{2 \mid \nu_r}   }{q^{3\nu_r/2}} \su{i \ge 0} 
\frac { (q^{i+\nu_r},n) }{q^{2i}} \frac{\psi_r (q)^i q^{i+\nu_r}}{(q^{i+\nu_r}, n)},
}
which gives the second case in \eqref{Qr}. Finally, if $v \le \nu_r$, then $v_q(u^2 -4k) \ge v$ and
\bs{ 
Q_r (q) &= \su{0 \le j < v/2  } \frac{q^{2j}} {q^{3j}}  + \su{i \ge 0\\ 2 \mid i} \frac{ q^{v} \varphi(q^i)}{q^{2i}}  \su{j \ge 1\\2j \ge v  }  \frac{1} {q^{3j}},  
}
which yields the last assertion in \eqref{Qr}.
\end{proof}

\begin{remark} 
In both cases above, note that $Q_r (q) = Q_u (q)$ if $r\equiv u \pmod n$.
\end{remark}

\subsection{Estimate of the error term $E_r (x)$}\label{subsec:error term}

We are finally ready to bound the error term $E_r (x)$ in \eqref{eq:Erx}.
Let $E(U, L):=E(U, L; r)$ denote the error incurred in completing the double sums over $m$ and $f$ defined by 
\eqn{
\label{eq:E-U-L}
\Bigg|\su{m, f \ge 1\\ (f,2r(r-2))=1\\(n,f^2) \mid 4k - r^2} H(m,f,2,3;c_f^r) - \su{m \le U\\f \le L\\ (f,2r(r-2))=1\\(n,f^2) \mid 4k - r^2} H(m,f,2,3;c_f^r)\Bigg|,
}


The following result generalizes and refines \cite[Proposition~15]{BCD} by making the dependence of the upper bound on $n$ explicit.  We apply Rankin’s trick (see the proof for more detail) to convert the incomplete sums into complete double sums and then factor these double sums. Although our proof is more involved than a crude estimate obtained using a simpler argument, this way we get a better bound in the modulus $n$. 
\begin{proposition}\label{prop:Erx}
Recall definitions \eqref{eq:definition-H}, \eqref{eq:Erx}, and \eqref{eq:E-U-L}. Then,
\eqn{\label{EULbound}
E(U, L) 
\ll \frac{\tau(n)^2 \log\log n}{L^{0.63}}.
}  
Consequently, 
\eqs{
E_r(x)\ll \frac{\tau(n)^2 \log\log n}{\varphi (n) L^{0.63}} \pr{q \mid (r-1)n\\ q \neq 2} \frac{q-1}{q-2} .
}
\end{proposition}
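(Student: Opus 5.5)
The plan is Rankin's trick. The error $E(U,L)$ is the sum of $H(m,f,2,3;c_f^r)$ over pairs with $m>U$ or $f>L$. We bound it by the sum of $|H|$ over the two tails $\{m>U\}$ and $\{f>L\}$ separately.

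For the $m$-tail, we insert the weight $(m/U)^{2-\beta}\ge 1$ with $\beta\in(3/2,2)$. This bounds the tail by $U^{\beta-2}$ times the complete double sum of $H(m,f,\beta,3;|c_f^r|)$. For the $f$-tail, we insert $(f/L)^{3-\gamma}$ with $\gamma\in(2,3)$. This gives $L^{\gamma-3}$ times the complete sum of $H(m,f,2,\gamma;|c_f^r|)$.

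Each complete sum is factored exactly as in \eqref{mfsumfactorazion}. That is, it equals a sum over $f$ of $M(f,\beta;|c_f^r|)S(f,\beta;|c_f^r|)P(f)/\varphi(f^\gamma)$.

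\begin{itemize}
\item \textbf{The $m$-part.} We use \eqref{M(f)bound}, i.e.\ $M(f,\beta;|c_f^r|)\ll (2\beta-3)^{-1}\prod_{q\mid n}(1-q^{1-\beta})^{-1}$. The product over $q\mid n$ is $\ll \log\log n$ times a harmless power of $\tau(n)$: for $\beta$ near $2$ it is essentially $n/\varphi(n)\ll\log\log n$.
\item \textbf{The $f$-part.} Using Lemma \ref{lem:mult-S-f}, we factor $\sum_f S(f,\beta;|c_f^r|)P(f)/\varphi(f^\gamma)$ as an Euler product. For $q\nmid n$, \eqref{2f >= n} shows that the local factor $S(q^j,\beta;|c|)$ is bounded. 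The $j$-sum is then $1+O(q^{1-\gamma})$, and this converges absolutely once $\gamma>2$, up to $\zeta(\gamma-1)\ll(\gamma-2)^{-1}$.
\item \textbf{Primes $q\mid n$.} Here the factor $\varphi((sf^2,n))$ can be as large as $q^{v_q(n)}$. Via \eqref{n > 2f}, we use $|c_f^r(q^i)|\le q^{i+2v_q(f)}/(q^{i+2v_q(f)},n)$, so the product $\varphi((\cdot,n))\,|c|$ is at most $q^{i+2j}$. The sum over $i$ with weight $q^{-\beta i}$ and over $j$ with weight $q^{-\gamma j}$ then gives a local factor $\ll 1+O(v_q(n)/q^{\ldots})$. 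We also have to track that the number of admissible $j$ (those with $2j\le v_q(r^2-4k)$ or $2j\ge v_q(n)$) is $\ll v_q(n)+1$. The product of $(v_q(n)+1)$ over $q\mid n$ is $\tau(n)$, and one such factor from each of the two tails gives the $\tau(n)^2$.
\end{itemize}

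We then optimize. The $m$-tail contributes $U^{\beta-2}(2\beta-3)^{-1}\tau(n)^2\log\log n$. Choosing $\beta$ close to $2$ but fixed, e.g.\ $\beta=2-\delta$, makes this a power saving $x^{-\delta/2}$, which is negligible compared with $L^{-0.63}$.

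The $f$-tail contributes $L^{\gamma-3}(\gamma-2)^{-1}$ times the same arithmetic factors. Taking $\gamma=2.37$ gives $L^{-0.63}$ with a constant.

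One subtlety: the primes $q\mid r(r-2)$ and the factor $P(f)$ contribute $\prod (q-1)/(q-2)$. This must be uniformly bounded or absorbed. Since $(f,2r(r-2))=1$ is imposed and $P$ has only $O(1/q)$ deviations weighted by $q^{-\gamma}$, the Euler product stays absolutely convergent. That yields \eqref{EULbound}.

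Finally, $E_r(x)=\cT_l(r)-\cC_r$ is, by \eqref{incompletesum} and \eqref{Cr}, exactly the prefactor $\frac{2}{\varphi(n)}\prod_{q\mid(r-1)n}\frac{q-1}{q-2}\prod_{q\neq2}\frac{q(q-2)}{(q-1)^2}$ times the completion error. The last product is an absolute constant, so the consequence follows immediately from \eqref{EULbound}.

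The main obstacle is the local analysis at primes $q\mid n$. There we need a bound uniform in $v_q(n)$ and in $v_q(r^2-4k)$, so that the loss is only $\tau(n)^2\log\log n$ rather than a power of $n$. I would first try the crude bound $|c_f^r(q^i)|\le q^{i+2v_q(f)}/(q^{i+2v_q(f)},n)$ and count admissible $j$ as above, and refine only if a stray factor $q^{v_q(n)/2}$ survives.
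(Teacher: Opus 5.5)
Your proposal is correct and follows essentially the same route as the paper. The paper also applies Rankin's trick to the two tails with a fixed $\beta\in(3/2,2)$ and $\gamma=2.37$, factors via \eqref{mfsumfactorazion} using \eqref{M(f)bound}, bounds the $f$-sum as an Euler product via Lemma \ref{lem:mult-S-f} with local factors at $q\mid n$ costing a $\tau(n)$-type factor, and then reads off $E_r(x)$ from the prefactor in \eqref{incompletesum} and \eqref{Cr}. The only difference is at $q\mid n$, where the paper evaluates the local factors exactly (its $F_2,F_3$) and takes $\gamma\ge 2.37$ so that every non-leading term is $<1$, whereas your crude bound $\varphi((q^{i+2j},n))\,|c_{q^j}^r(q^i)|\le q^{i+2j}$ with the geometric decay $q^{-(\gamma-2)j}$ also works; note that when $v_q(r^2-4k)\ge v_q(n)$ every $j\ge 1$ is admissible, so your claimed count of admissible $j$ fails there, and it is this decay, not that count, that controls the factor.
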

\begin{proof}

Applying the triangle inequality and using Rankin's trick\footnote{For any sequence $a_n \in \R^+$,
\eqs{
\sum_{n > x} a_n \le \sum_{n \ge 1} (n/x)^\sigma a_n =  \frac{1}{x^\sigma} \sum_{n \ge 1}  n^\sigma a_n. 
}
for any $\sigma\ge 0$ and $x>0$.}, for any $\beta \le 2$ and $\gamma \le 3$, we have  
\multn{\label{rankintrick}
E(U, L) 
=\Bigg| \su{m \ge U, f\le L \\ (f,2r(r-2))=1\\(n,f^2) \mid 4k - r^2} H(m,f,2,3;c_f^r) + \su{m \ge 1, f \ge L \\ (f,2r(r-2))=1\\(n,f^2) \mid 4k - r^2} H(m,f,2,3;c_f^r)\Bigg|\\
\le  \su{m, f \ge 1\\ (f,2r(r-2))=1\\(n,f^2) \mid 4k - r^2} \bigl(U^{\beta-2}  H(m,f,\beta,3; |c_f^r|)\\
+ L^{\gamma-3} H(m,f,2,\gamma; |c_f^r|) \bigr).
}
By \eqref{M(f)bound}, it follows that
\multn{\label{costofcompletion}
	\su{m, f \ge 1\\ (f,2r(r-2))=1\\(n,f^2) \mid 4k - r^2} H(m,f,\beta,\gamma;|c_f^r|) \\
\ll 
	\frac 1 {2\beta-3} \pr{q \mid n} \Bigl(1- \frac 1 { q^{ \beta-1  }   }\Bigr)^{-1}
	\su
	{
		f \ge 1\\(2r(r-2),f)=1\\ (n,f^2) \mid 4k - r^2  
	}
	\frac {S(f,\beta;|c_f^r|) P(f)}
	{\varphi (f^\gamma)}.
}
Note that since all functions  in the $f$-sum in \eqref{costofcompletion} are multiplicative, we can factor the $f$-sum as 

\eqn{\label{eq:F1F2F3}
\su
	{
		f \ge 1\\(2r(r-2),f)=1\\ (n,f^2) \mid 4k - r^2  
	}\frac {S(f,\beta;|c_f^r|) P(f)}
	{\varphi (f^\gamma)}=F_1 F_2 F_3,
}
where
\bs{
F_1 &= \pr{q \nmid 2nr(r-2)} \Bigl(1 + \sum_{i \ge 1} \frac{S(q^i, \beta, |c_f^r|) P(q^i)}{\varphi(q^{i\gamma})}\Bigr), 
}
\bs{
F_2  &= \pr{q \mid n \\q \nmid 2r(r-2)\\ v_q(n) > \nu_r}
\Bigl(1 + \su{j \ge 1\\ (n,q^{2j}) \mid 4k - r^2} \frac{S(q^j, \beta, |c_f^r|) P(q^j)}{\varphi(q^{j\gamma})}\Bigr),}
\bs{
F_3 &= \pr{q \mid n \\q \nmid 2r(r-2)\\ v_q(n) \le \nu_r}
\Bigl(1 + \su{j \ge 1\\ (n,q^{2j}) \mid 4k - r^2} \frac{S(q^j, \beta, |c_f^r|) P(q^j)}{\varphi(q^{j\gamma})}\Bigr).
}
Recalling the definitions of $P(f)$ and $S(f,\beta;F)$ in \eqref{M(f)} and using Lemma \ref{lem:cfr(m)}, 
we see for $\gamma \in [2.37,3)$, $\beta=2$ and $\beta \in [3/2, 2]$, $\gamma =3$ that
\eqs{
F_1 
< \pr{q \nmid 2nr(r-2)} \Bigl(1 + \Bigl(1 + \frac{q-1}{q(q^{2(\beta-1)}-1)}\Bigr) \frac{q}{(q-2)(q^{\gamma} -1)}  	
    \Bigr) \ll 1,
}
\eqs{
F_2  
= \pr{q \mid n \\q \nmid 2r(r-2)\\ v_q(n) > \nu_r} \Bigl(1 + \su{2j < \nu_r} \frac 1 {q^{(\gamma-2) j}} + \delta_{2 \mid \nu_r} \frac{ q^{\nu_r+\beta-1}}{q^{\nu_r\gamma/2} (q^{\beta-1} - 1)}\Bigr) < \tau(n), }
and
\mult{
F_3 
= \pr{q \mid n \\q \nmid 2r(r-2)\\ v_q(n) \le \nu_r}
\biggl(1 + \su{2j < v_q (n)} \frac 1 {q^{(\gamma-2) j}} + \frac{q^{v_q(n)+ \gamma (1 - \ceil{v_q(n)/2})}}{q^\gamma-1} \\
\cdot \Bigl( 1 + \frac{q-1}{q(q^{2(\beta-1)}-1)}\Bigr) \biggr) < \tau(n), 
}
since all terms but the first in each factor of $F_2$ and $F_3$ are less than 1 and the number of terms does not exceed $v_q(n)$ for each $q \mid n$. Inserting these bounds into \eqref{eq:F1F2F3}, and then using \eqref{costofcompletion},  we find that for any $\beta \in (3/2,2]$ and $\gamma \in [2.37, 3]$,  \eqref{rankintrick} becomes
\eqs{ 
E(U,L) \ll \pr{q \mid n} \Bigl(1- \frac 1 {q^{\beta-1}}\Bigr)^{-1} \frac {\tau(n)^2}{(2\beta-3)U^{2-\beta}}  + \pr{q \mid n} \Bigl(1- \frac 1 q \Bigr)^{-1}  \frac{\tau(n)^2}{L^{3-\gamma}}.
}
Choosing any fixed $\beta\in (3/2, 2)$ and $\gamma=2.37$, we get the estimate in \eqref{EULbound} for  sufficiently large $x$. The bound for $E_r(x)$ defined in \eqref{eq:Erx} follows by applying \eqref{EULbound} to the difference between \eqref{incompletesum} and \eqref{Cr}. 
\end{proof}

\section{Proof of Proposition \ref{prop:asym4Flr}} \label{sec:asym4Flr}


In this section, we prove Proposition \ref{prop:asym4Flr}, which may be viewed as an averaged version of Proposition \ref{prop:fixedr}. The main difficulty is to factor the average value of 
$\mathcal{C}_r$. To overcome this, we expand the Euler factors of $\mathcal{C}_r$ at primes dividing $r(r-2)$ and $r-1$. By interchanging these expansions with the sum over $r$, we obtain an almost multiplicative function.
Similar ideas appear in \cite{MR409385} and \cite{BCD}. An additional complication in our setting is that we must  decompose the $r$-sum into congruence classes modulo $n$. 
As a result, we express the  average in terms of an almost Euler product $\mathfrak{C}_{n, k}$, together with an error term arising from the truncation of the $r$-sums and the error terms in Propositions  \ref{prop:error-term-varE} and \ref{prop:fixedr}.

\begin{proof}[Proof of Proposition \ref{prop:asym4Flr}]
From \eqref{eq:Erx}, it follows that
\eqs{
\su{|r| \le R\\ r \neq 1} \cT_l (r) = \su{|r| \le R\\ r \neq 1} \cC_r + \su{|r| \le R\\ r \neq 1} E_r (x). 
}
Using \eqref{Crfinal}, we see that
\bsc{\label{Crsum}
\su{|r| \le R\\ r \neq 1} \cC_r 
&= \cC \su{|r| \le R\\ r \neq 1, 2 \nmid r \\ (n,k+1-r)=1}  \pr{q \mid r(r-2)\\ q \nmid 2n} \frac{  q^2 - 2q - 1}{q^2 - 2q -2 }
\pr{q \mid r-1\\ q \nmid 2n}  \frac{q^2-q-1 }{q^2 - 2q -2} 
\\
& \qquad \cdot
\pr{q \mid n\\q \nmid 2(r^2-4k)} 
\Bigl(1- \frac{ \eta_r (q) } {q} \Bigr)^{-1} \pr{q \mid (n,r^2-4k) \\ q \nmid 2r(r-2)}  Q_r (q), }
where
\eqn{\label{eq:C}
\cC := \frac {4} {3\varphi (n)}  
\prod_{q \neq 2} \frac{q^2(q^2 - 2q -2)}{(q+1)(q-1)^3} 
\pr{q \mid n \\ q \neq 2}
\frac{(q-1)(q^2-1)}{q(q^2 - 2q -2)}.
}
By Proposition \ref{prop:Erx}, we have that
\eqn{\label{Ersum} 
\su{|r| \le R\\ r \neq 1} E_r (x) \ll \frac{\tau(n)^2 \log\log n}{\varphi (n) L^{0.63}} \su{1 < r \le R} \pr{q \mid (r-1)n\\ q \neq 2} \frac{q-1}{q-2}.
}

Recall that for $p\mid n$ and $r\equiv u \mod n$, we have $Q_r (p) = Q_u (p)$. Therefore, we can rewrite \eqref{Crsum} as
\eqs{
\cC
\su{u \mod n\\ (n,k+1-u)=1} \pr{q \mid n\\q \neq 2} 
\Bigl(1- \frac{\eta_u (q) } {q} \Bigr)^{-1} 
\pr{q \mid n \\  q \nmid 2u(u-2)}  Q_u (q) 
\su{|r| \le R\\ r \equiv u \mod n \\
r \neq 1, 2 \nmid r} 
\sum_{d \in \cG(r)} h_r(d),
}
where $\cG(r)$ is the set of positive square-free integers $d$ composed of odd prime divisors of $r(r-1)(r-2)$ coprime to $n$, and
\eqs{
h_r(1)=1, \quad h_r(d) = \prod_{q \mid d} h_r (q),
}
where
\eqs{
h_r (q) = 
\begin{dcases}
H_1(q) := \frac{q^2-q-1 }{q^2 - 2q -2} -1 & \text{if } q \mid r-1, \ q \nmid 2n, \\ 
H_2(q) := \frac{  q^2 - 2q - 1}{q^2 - 2q -2 } -1 & \text{if } q \mid r(r-2), \ q \nmid n, \\
0 & \text{otherwise.}
\end{dcases}
}
Let $\cD(R)$ be the set of positive square-free integers $d$ with $(d,n)=1$ composed of odd prime divisors of $r(r-1)(r-2)$ for some odd $r \neq 1$ with $r \equiv u \mod n$ and $|r| \le R$. Then, we have
\multn{\label{eq:r-sum-multiplicative}
\su{|r| \le R\\ r \equiv u \mod n \\
	r \neq 1, 2 \nmid r} 
\sum_{d \in \cG(r)} h_r(d)
 =
\su{d \in \cD(R)} \su{|s| \le R\\ s \equiv u \mod n \\ s \neq 1, 2 \nmid s }  h_s(d), \\
= \su{d \in \cD(R)} \sum_{f_d} \prod_{q \mid d}  H_{f_d (q)} (q) \su{|s| \le R\\ s \equiv u \mod n \\ s \neq 1, 2 \nmid s \\ 
\forall q\mid d, H_{f_d (q)} (q) =  h_s(q) }
1,}
where the middle sum runs over all functions $f_d :\{ q: q \mid d\} \to \{ 1,2 \}$.
Note that $f_d$ is completely  determined by the vector  \[
(F_q)_{q\mid d}, \quad F_q
    \in \{1, 2\}.
\]
For each fixed $f= f_d$, the  $s$-sum above equals
\eqs{
 \Bigl(\frac{2R+1}{d[2,n]} + O(1)\Bigr) \prod_{q \mid d} f_d(q) = \frac{(2,n)}{dn} R \prod_{q \mid d} f_d(q) + O \bigl( 2^{\omega(d)} \bigr).
}
Inserting the above into \eqref{eq:r-sum-multiplicative}, we obtain
\eqs{
\su{|r| \le R\\ r \equiv u \mod n \\
	r \neq 1, 2 \nmid r} 
\sum_{d \in \cG(r)} h_r(d) 
= R \frac{(2,n)}{n} \su{d \in \cD(R)} G(d) + O \bigl( \mathcal{E}(R)\bigr), 
} 
where 
\bs{
 \mathcal{E}(R) &:=    \su{d \in \cD(R)} 2^{\omega(d)} \sum_{f_d} \prod_{q \mid d}  H_{f_d (q)} (q), \\
 G(d) & :=\frac{1}{d}\sum_{f_d}\prod_{q\mid d}H_{f_d(q)}(q)f_d(q).
}
We now show that $G(d)$ is multiplicative. 
For $(d_1, d_2)=1$, we have
\mult{
\sum_{\substack{(F_q)_{q\mid d_1d_2}\\ F_q\in \{1,2\}}} \prod_{q\mid d_1d_2}H_{F_q}(q) F_q=\sum_{\substack{(F_q)_{q \mid d_1}\\ F_q \in \{1,2\}}} \sum_{\substack{(F_q)_{q\mid d_2}\\ F_q \in \{1,2\}}}\prod_{q\mid d_1} H_{F_q}(q) F_q \cdot \prod_{q\mid d_2} H_{F_q}(q) F_q, \\
=\sum_{\substack{(F_q)_{q \mid d_1}\\ F_q \in \{1,2\}}} \prod_{q\mid d_1} H_{F_q}(q) F_q  \cdot \sum_{\substack{(F_q)_{q\mid d_2}\\ F_q \in \{1,2\}}}  \prod_{q\mid d_2} H_{F_q}(q) F_q.
}
Therefore, 
\eqs{
\sum_{f_{d_1d_2}}=\sum_{f_{d_1}}\sum_{f_{d_2}}, \; \text{ if $(d_1, d_2)=1$},
}
and we obtain
\bs{
R \frac{(2,n)}{n} \su{d \in \cD(R)} G(d) 
&= R \frac{(2,n)}{n} \pr{2 < q \le R\\ q \nmid n} 
\Bigl(1 +  \frac{ 1 }{q} \sum_{f_q} H_{f_q (q)} (q)  f_q(q) \Bigr),
\\ 
&= 
R \frac{(2,n)}{n} \pr{2 < q \le R\\ q \nmid n} 
\Bigl(1 +  \frac{   H_1(q) + 2H_2 (q) }{q} \Bigr), 
%
\\
&= R \frac{(2,n)}{n} \pr{q \nmid 2n} 
\frac{q^3 - 2 q^2 - q + 3}{q (q^2 - 2 q - 2)}
+ O(1/n).
}
Similarly, we can show that $\cE(R)$ factors as
\bs{
\mathcal{E}(R)
&= \pr{2 < q \le R\\ q \nmid n} 
\Bigl(1 +  2H_1(q) + 4H_2 (q) \Bigr), \\
&=
\pr{2 < q \le R\\ q \nmid n} 
\Bigl( 1 + \frac{2 (q + 3)} {q^2 - 2 q - 2} \Bigr)
\ll (\log R)^2,
}
With these computations, we obtain
\eqn{\label{Crsumfinal}
\su{|r| \le R\\ r \neq 1} \cC_r  
= \ckn R  +  O \bigl( \cC \cS(n; k) \log^2R \bigr) ,
}
where
\eqn{\label{def:Snk}
\cS(n; k) := \su{u \mod n\\ (n,k+1-u)=1} \pr{q \mid n\\q \neq 2} 
\Bigl(1- \frac{\eta_u (q) } {q} \Bigr)^{-1} 
\pr{q \mid n \\  q \nmid 2u(u-2)}  Q_u (q),
}
and
\eqn{\label{def:cnk}
\ckn := \cC \cS(n; k)  \frac{(2,n)}{n} \pr{q \nmid 2n} \frac{q^3 - 2 q^2 - q + 3}{q (q^2 - 2 q - 2)}.
}
After simplifying, we get

\eqn{\label{cnksimpler}
\ckn = \frac {4(2,n)} {3\varphi (n)} \frac{\cS(n; k)}{n} 
\prod_{q \nmid 2n} \frac{q(q^3 - 2 q^2 - q + 3)}{(q+1)(q-1)^3}   
\pr{q \mid n\\q \neq 2} \frac{q}{q-1}.
}
Combining \eqref{Ersum} and \eqref{Crsumfinal}, we find that 
\mult{
\su{|r| \le R\\ r \neq 1} \cT_l (r) - \ckn R  \\
\ll  \cC \cS(n; k) \log^2R 
+ \frac{\tau(n)^2 \log\log n}{\varphi (n) L^{0.63}} \su{1 < r \le R} \pr{q \mid (r-1)n\\ q \neq 2} \frac{q-1}{q-2}.
}

For the claimed error term, we apply the bound for
$\cC \cS(n; k)$ from Lemma \ref{lem:CSnkbound} to \eqref{Crsumfinal}, and then combine it with the error terms from Propositions \ref{prop:fixedr} and \ref{prop:error-term-varE}. This completes the proof of Proposition \ref{prop:asym4Flr}.

\end{proof}

\section{The Euler Product Formula of $\ckn$}\label{sec:fullEuler-product}

The main result of this section is the Euler product formula for $\ckn$ given in Proposition \ref{prop:Euler-product}. To show this result, we begin by showing that $\cS(n;k)$ is multiplicative, reducing the evaluation to prime powers. For each prime power $p^v \| n$, we derive explicit formulas for $\cS(p^v;k)/p^v$, considering separately the cases where $\bigl(\tfrac{k}{p}\bigr)=\pm1$. Combining these local computations will then yield the desired Euler product for $\ckn$, as well as an upper bound for $\mathcal{C}\cS(n;k)$ given in Lemma \ref{lem:CSnkbound}.



\begin{lemma}\label{lem:multi-Sp}
We have 
\eqs{\cS(n; k) = \pr{p^v \| n} S(p^v;k).}
\end{lemma}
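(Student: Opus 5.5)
Since $S(p^v;k)$ is not defined separately, we read it as $\cS(p^v;k)$, the sum \eqref{def:Snk} with $n$ replaced by $p^v$. The plan is to apply the Chinese remainder theorem to the sum over $u \bmod n$. We show that the summand in \eqref{def:Snk} factors as a product over $p^v\| n$ of local functions $g_{p^v}(u)$, each depending only on $u \bmod p^v$, and that the summation condition factors the same way. Then
\[
\sum_{u \bmod n} \prod_{p^v\|n} g_{p^v}(u) = \prod_{p^v\|n}\sum_{u\bmod p^v} g_{p^v}(u).
\]

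First, the condition $(n,k+1-u)=1$ is equivalent to $p\nmid k+1-u$ for every $p\mid n$. Each of these conditions depends only on $u\bmod p$, and hence only on $u\bmod p^v$. So we put the indicator of $p\nmid k+1-u$ into $g_{p^v}$.

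Second, the summand is already a product over primes $q\mid n$; we group its factors prime by prime. For an odd $q$ with $q^v\|n$, the factor $(1-\eta_u(q)/q)^{-1}$ depends only on $u\bmod q$, because $\eta_u(q)=\bigl(\tfrac{u^2-4k}{q}\bigr)$. The condition $q\nmid 2u(u-2)$ also depends only on $u\bmod q$. For $Q_u(q)$ in \eqref{Qr}, only $\nu_u=v_q(u^2-4k)$ truncated at $v$ matters, together with $\psi_u(q)$ when $\nu_u<v$. Both are determined by $u\bmod q^v$. Indeed, if $u\equiv u'\pmod{q^v}$ then $u^2-4k\equiv u'^2-4k\pmod{q^v}$. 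So either both valuations are $\ge v$, or they are equal and below $v$, in which case the unit parts agree mod $q$. This is the analogue of the Remark after Proposition~\ref{prop:Cr}, now with the modulus $q^v$ in place of $n$. For $q=2$ no factor enters apart from the coprimality indicator. Define $g_{p^v}(u)$ to be the product of the indicator and these $p$-local factors.

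Third, we apply the Chinese remainder theorem: $u\bmod n$ corresponds bijectively to the tuple $(u\bmod p^v)_{p^v\|n}$. Hence the sum factors as $\prod_{p^v\|n}\sum_{u\bmod p^v} g_{p^v}(u)=\prod_{p^v\|n}\cS(p^v;k)$. The last equality uses that $\cS(p^v;k)$ is exactly this local sum. In the definition of $\cS(p^v;k)$ the quantities $\eta_u(p)$, $Q_u(p)$ and $v=v_p(n)$ are the same as in the global sum, since $Q_u(p)$ depends on $n$ only through $v_p(n)$.

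The only delicate step is checking that $Q_u(q)$ depends on $u$ only modulo $q^{v_q(n)}$, and not on the rest of $n$ or on the residue modulo $n$. This is where the case analysis of \eqref{Qr} is used. The remaining steps are bookkeeping.
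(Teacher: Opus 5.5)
Your proof is correct and follows essentially the paper's argument. The paper also notes that the summand $s_u(n)$ is multiplicative in $n$ and depends only on $u \bmod n$, and then applies the Chinese remainder theorem to get $\cS(n_1n_2;k)=\cS(n_1;k)\,\cS(n_2;k)$ for coprime $n_1,n_2$. You factor over all prime powers $p^v\| n$ at once instead, and you verify explicitly what the paper calls clear: $Q_u(q)$ depends only on $u \bmod q^{v_q(n)}$, and on $n$ only through $v_q(n)$.
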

\begin{proof}
Clearly 
\eqs{
s_u(n):=\pr{q \mid n\\q \neq 2} 
\Bigl(1- \frac{ \eta_u(q) } {q} \Bigr)^{-1} \pr{q \mid n \\  q \nmid 2u(u-2)}  Q_u (q)
}
is multiplicative in $n$, and $s_u(n)=s_{u'}(n)$ if $u\equiv u'\mod n$. If $(n_1, n_2)=1$, then by the Chinese remainder theorem, 

\eqs{\sum_{\substack{u\pmod {n_1n_2}\\ (n_1n_2, k+1-u)}} s_u(n_1n_2) =\sum_{\substack{u_1\pmod {n_1}\\ u_2\pmod {n_2}\\ (n_1n_2, k+1-(n_2\bar{u}_2u_1+n_1\bar{u}_1 u_2))}} s_{u_1}(n_1)s_{u_2}(n_2)}

\eqs{
\quad \quad \quad \quad \quad \quad \quad \quad \quad   =\sum_{\substack{u_1\pmod {n_1}\\ u_2\pmod {n_2}\\ (n_1, k+1-u_1)}} s_{u_1}(n_1)\cdot \sum_{\substack{u_1\pmod {n_1}\\ u_2\pmod {n_2}\\ (n_2, k+1-u_2)}} s_{u_2}(n_2),}
where $n_2\bar{u}_2\equiv 1\pmod {n_1}$ and $ n_1\bar{u}_1\equiv 1\pmod {n_2}$. Equivalently, 
$\cS(n_1n_2; k)=\cS(n_1; k) \cS(n_2; k)$. This shows the multiplicativity of the function $\cS(n; k)$.
\end{proof}

Next, we look at the local factors. Note that
\eqs{
\cS(p^v;k) = \su{u \mod p^v\\ (p,k+1-u)=1} 
\pr{q \mid p\\q \neq 2} 
\Bigl(1- \frac{\eta_u (q) } {q} \Bigr)^{-1} 
\pr{q \mid n \\  q \nmid 2u(u-2)}  Q_u (q).
}
Thus, $\cS(2^{v_2(n)}; k) = 2^{v_2(n)-1}$ if $2 \mid n$, and for odd $p \mid n$,
\eqs{
\cS(p^{v_p(n)}; k)  = \su{u \mod p^{v_p(n)}\\ p \nmid k+1-u} 
\Bigl(1- \frac{ \eta_u (p) } {p} \Bigr)^{-1} 
\pr{q \mid p \\ q \nmid  u(u-2) }  Q_u (q).
}
From now on, we fix an odd prime $p\mid n$ and let $v=v_p(n)$. 

Using the definitions of $\cS(p^v; k), Q_u(p)$ and $\eta_u(p)$, we split the local factor $\cS(p^v; k)$ as
\eqs{
\cS(p^v; k) = \cS_{\mathrm{reg}} (p^v;k) + \cS_{\mathrm{root}} (p^v; k),
}
where 
\bs{
\cS_{\mathrm{reg}} (p^v;k)
	&=
	\su{u \mod p^{v} \\ p\nmid k+1-u \\ p\nmid u^2-4k}
	\p{1-\frac{\eta_u (p)} p }^{-1}, \\
\cS_{\mathrm{root}} (p^v; k) 
	&=
	\su{u \mod p^{v} \\ p \mid u^2 - 4k\\ p \nmid k+1-u} \pr{q \mid p \\ q \nmid  u(u-2) }  Q_u (q).
}
Since the Legendre symbol depends only on $u \mod p$, each residue
$a \mod p$ has exactly $p^{v-1}$ lifts modulo $p^{v}$. Thus
\begin{equation}
    \label{eq:Sreg}
	\cS_{\mathrm{reg}} (p^v;k)
	=
	p^{v-1}
	\su{a \mod p \\ a^2 \not\equiv 4k \pmod p \\ a \not\equiv k+1 \pmod p}
	\p{1-\frac{\eta_a(p)} p }^{-1}.
\end{equation}

\subsection{Evaluation of local factors when $(k/p)=-1$}

\begin{proposition} \label{prop:nonresidueSpn}
Assume that $p^v \| n$ and $(k/p)=-1$. Then, 
\eqs{
\frac{\cS(p^v; k)}{p^v} =
\frac{p-2}{p-1}.}
\end{proposition}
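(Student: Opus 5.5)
Since $(k/p)=-1$ and $p$ is odd, $4k$ is a quadratic nonresidue modulo $p$. Hence the congruence $u^2\equiv 4k \pmod p$ has no solutions, and $\cS_{\mathrm{root}}(p^v;k)$ is an empty sum. So $\cS(p^v;k)=\cS_{\mathrm{reg}}(p^v;k)$. I would then work entirely from \eqref{eq:Sreg}: for every $a \bmod p$ the symbol $\eta_a(p)=\bigl(\frac{a^2-4k}{p}\bigr)$ equals $\pm1$, never $0$. The sum therefore only requires counting the residues $a$ with $\eta_a(p)=+1$ and those with $\eta_a(p)=-1$, after removing the single excluded class $a\equiv k+1$.

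The key input is the classical evaluation of a complete quadratic character sum of a quadratic polynomial. Since the discriminant of $a^2-4k$ is $16k\not\equiv 0 \pmod p$,
\[
\sum_{a \bmod p}\Bigl(\frac{a^2-4k}{p}\Bigr)=-1 .
\]
Combined with the fact that all $p$ terms are $\pm1$, this gives $N_+=(p-1)/2$ residues with $\eta_a(p)=1$ and $N_-=(p+1)/2$ with $\eta_a(p)=-1$. Next I handle the excluded class $a\equiv k+1$. There $a^2-4k=(k-1)^2$, and $k\not\equiv1 \pmod p$ because $k$ is a nonresidue, so $\eta_{k+1}(p)=+1$. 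Removing this class changes the counts to $N_+=(p-3)/2$ and $N_-=(p+1)/2$.

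It remains to compute. Since $(1-1/p)^{-1}=p/(p-1)$ and $(1+1/p)^{-1}=p/(p+1)$,
\[
\cS(p^v;k)=p^{v-1}\Bigl(\frac{p-3}{2}\cdot\frac{p}{p-1}+\frac{p+1}{2}\cdot\frac{p}{p+1}\Bigr)=p^{v-1}\cdot\frac{p(p-2)}{p-1}.
\]
Dividing by $p^v$ gives the claim. The only step needing real justification is the character sum identity; I would either cite it or prove it by the substitution $a^2-4k=y^2$, i.e.\ $(a-y)(a+y)=4k$, which has exactly $p-1$ solutions. The rest is bookkeeping.
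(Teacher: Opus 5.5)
Your proposal is correct and follows essentially the same route as the paper. You reduce to $\cS_{\mathrm{reg}}(p^v;k)$, use $\sum_{a \bmod p}\eta_a(p)=-1$ (proved by counting solutions of $x^2-y^2=4k$) to get $N_+=(p-1)/2$ and $N_-=(p+1)/2$, remove the class $a\equiv k+1$ where $\eta_{k+1}(p)=1$, and sum; you also supply the justification $k\not\equiv 1 \pmod p$ that the paper leaves implicit.
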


\begin{proof}
Since $\bigl(\frac{k}{p}\bigr) = -1$, the congruence $a^2 \equiv 4k \pmod p$ has no solutions. Consequently, $\cS(p^v; k) = \cS_{\mathrm{reg}}(p^v; k)$ and $\eta_a(p) \in \{\pm 1\}$ for every $a \bmod p$. Moreover, since $p \nmid 4k$, standard character sum evaluations\footnote{We briefly sketch a proof of \eqref{eq:quadratic}: Let 
\[
W:=\su{a \mod p} \left(\kronecker{a^2-4k}{p}+1\right), 
\]
which is the number of solutions $(x, y)\in \F_p\times \F_p$ of the equation $4k = x^2 - y^2$. Putting $u=x-y, v=x+y$ and using $p\nmid 4k$, it follows that $W=p-1$, which leads to the claim. } yield
\eqn{\label{eq:quadratic}
\su{a \bmod p \\ a^2 \not\equiv 4k \pmod p} \eta_a(p) 
= \sum_{a \bmod p} \eta_a(p) 
= -1.
}
Let 
\eqs{N_+ := \sum_{\substack{a \bmod p \\ \eta_a(p) = 1}} 1, \quad 
N_- := \sum_{\substack{a \bmod p \\ \eta_a(p) = -1}} 1.}
Combining the relations
\begin{align*}
N_+ - N_- = -1 \quad \text{and} \quad N_+ + N_- = p,
\end{align*}
we obtain
\[
N_+ = \frac{p-1}{2} \quad \text{and} \quad N_- = \frac{p+1}{2}.
\]

Since $\eta_{k+1}(p) = 1$, removing this single term from $N_+$ and applying \eqref{eq:Sreg}, we find
\begin{align*}
\frac{\cS(p^v; k)}{p^{v - 1}} 
&= \sum_{\substack{a \bmod p \\ a^2 \not\equiv 4k \pmod p \\ a \not\equiv k+1 \pmod p \\ \eta_a(p) = 1}} \left( 1 - \frac{1}{p} \right)^{-1} 
+ \sum_{\substack{a \bmod p \\ a^2 \not\equiv 4k \pmod p \\ a \not\equiv k+1 \pmod p \\ \eta_a(p) = -1}} \left( 1 + \frac{1}{p} \right)^{-1}, \\[1.5ex]
&= \left( \frac{p - 1}{2} - 1 \right) \frac{p}{p - 1} + \left( \frac{p + 1}{2} \right) \frac{p}{p + 1} = \frac{p(p - 2)}{p - 1}. 
\end{align*}
\end{proof}

\subsection{Evaluation of local factors when $(k/p)=1$}

Define 
\begin{equation}\label{eq:ak}
    a_{k}=a_k(p)= 1 + \eta_{k+1} (p) = \begin{cases}
        1 & \text{if } k\equiv 1 \mod p,\\
        2  & \text{if } k\neq 1 \mod p.
    \end{cases}
\end{equation}
\begin{lemma}\label{lem:Sp-reg}
Assume that $p^v \| n$ and $(k/p)=1$. Then, 
\eqs{\label{sreg}
\frac{\cS_{\mathrm{reg}} (p^v;k)}{p^v} = \frac p {p+1} -  \frac {a_k} {p-1}.}
\end{lemma}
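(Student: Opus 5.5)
The plan is to follow the proof of Proposition~\ref{prop:nonresidueSpn}, starting from \eqref{eq:Sreg}. That formula reduces the claim to evaluating
\[
\frac{\cS_{\mathrm{reg}}(p^v;k)}{p^{v-1}}=\su{a \mod p\\ a^2\not\equiv 4k \pmod p\\ a\not\equiv k+1 \pmod p}\Bigl(1-\frac{\eta_a(p)}{p}\Bigr)^{-1}.
\]
The one new feature is that $(k/p)=1$ makes $4k$ a nonzero square modulo $p$. Hence the congruence $a^2\equiv 4k \pmod p$ has exactly two solutions $a\equiv \pm 2\sqrt{k}$, and these are precisely the residues with $\eta_a(p)=0$. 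Every other residue has $\eta_a(p)=\pm1$.

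First I count the residues by the value of $\eta_a(p)$. Since $p\nmid 4k$, the character sum \eqref{eq:quadratic} still gives $\sum_{a \bmod p}\eta_a(p)=-1$. Define $N_\pm$ as the number of $a \bmod p$ with $\eta_a(p)=\pm1$. Then $N_+ - N_- = -1$ and $N_+ + N_- = p-2$, so $N_+=(p-3)/2$ and $N_-=(p-1)/2$.

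Next I remove the class $a\equiv k+1$. Since $(k+1)^2-4k=(k-1)^2$, we have $\eta_{k+1}(p)=\bigl(\tfrac{(k-1)^2}{p}\bigr)$, which is $1$ if $k\not\equiv 1$ and $0$ if $k\equiv 1 \pmod p$. This is exactly why $a_k=1+\eta_{k+1}(p)$ in \eqref{eq:ak} takes the values $2$ and $1$ in those two cases.
\begin{itemize}
\item If $k\equiv 1 \pmod p$, then $k+1\equiv 2$ satisfies $2^2\equiv 4k$, so this class is one of the two roots and has already been excluded. Nothing further is removed.
\item If $k\not\equiv 1 \pmod p$, then $k+1$ is not a root, and exactly one class is removed from the $\eta_a(p)=1$ count.
\end{itemize}
Both cases are covered by the statement that the number of surviving residues with $\eta_a(p)=1$ is $(p-3)/2-(a_k-1)$.

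It then remains to sum. We get
\[
\frac{\cS_{\mathrm{reg}}(p^v;k)}{p^{v-1}}=\Bigl(\frac{p-3}{2}-(a_k-1)\Bigr)\frac{p}{p-1}+\frac{p-1}{2}\cdot\frac{p}{p+1},
\]
and this should simplify to $\dfrac{p^2}{p+1}-\dfrac{a_k\,p}{p-1}$; dividing by $p$ then gives the claim. The simplification is routine algebra; as a consistency check, at $p=3$ with $k\equiv 1$ both sides equal $3/4$.

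The only step that needs care is the case split on whether $k+1$ coincides with one of the roots of $a^2\equiv 4k$. I expect that to be the main, though mild, obstacle: it must be handled so that no class is double-excluded.
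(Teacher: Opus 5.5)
Your proposal is correct and follows essentially the same route as the paper. Both arguments count $N_+=(p-3)/2$ and $N_-=(p-1)/2$ via \eqref{eq:quadratic}, remove the class $a\equiv k+1$ using $(k+1)^2-4k=(k-1)^2$ (so that the number removed from $N_+$ is $\eta_{k+1}(p)=a_k-1$), and sum with weights $p/(p\mp 1)$. Your remark that $k+1$ is already one of the two excluded roots when $k\equiv 1 \pmod p$ is exactly what the paper's subtraction of $\eta_{k+1}(p)$ encodes.
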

\begin{proof}
Since $\bigl( \frac k p \bigr)=1$, the congruence $a^2\equiv4k \pmod p$ has exactly two solutions, so
\eqs{
	N_0 = \#\set{a \mod p : \eta_a(p)=0}=2.
}
Moreover, since $p\nmid 4k$, we can use \eqref{eq:quadratic} to obtain that
\eqs{
	N_+ := \su{a \mod p\\ \eta_a(p) = 1} 1 = \frac{p-3}{2}, \qquad N_- := \su{a \mod p\\ \eta_a(p) = -1} 1 = \frac{p-1}{2}.
}
Now we remove the residue $a\equiv k+1 \pmod p$ from $N_+$. Since
\eqs{
	(k+1)^2-4k=(k-1)^2,
}
we have
\eqs{
\eta_{k+1} (p) =
	\ar{
		1 & k \not\equiv 1 \pmod p, \\
		0 & k \equiv 1 \pmod p .
	}
}
Hence, 
\eqs{
\cS_{\mathrm{reg}} (p^v;k) 
=
	(N_+- \eta_{k+1}(p)) \frac{p}{p-1}
	+
	N_-\frac{p}{p+1} ,
}
which gives the desired result. 
\end{proof}
\begin{lemma}\label{lem:Sp-root}
Assume that $p^v \| n$ and $(k/p)=1$. Then, 
\eqs{
\frac{\cS_{\mathrm{root}} (p^v; k) }{p^v} 
= a_k \frac{p}{p^2-1},
}
where $a_k$ is defined in \eqref{eq:ak}.
 \end{lemma}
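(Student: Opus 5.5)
The plan is to reduce $\cS_{\mathrm{root}}(p^v;k)$ to a sum over Hensel lifts of the roots of $a^2\equiv 4k \pmod p$, and then to evaluate that sum by grouping lifts according to the exact valuation $\nu_u=v_p(u^2-4k)$.

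\textbf{Step 1: the product over $q\mid p$ never disappears, and there are $a_k$ admissible roots.} Let $u$ be an admissible index, so $p\mid u^2-4k$ and $p\nmid k+1-u$.
\begin{itemize}
\item If $p\mid u$, then $p\mid 4k$. This is impossible because $(k/p)=1$.
\item If $p\mid u-2$, then $4\equiv 4k$, so $k\equiv 1 \pmod p$. Then $u\equiv 2\equiv k+1$, which contradicts $p\nmid k+1-u$.
\end{itemize}
Hence every admissible $u$ satisfies $p\nmid u(u-2)$, and the summand is exactly $Q_u(p)$.

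Since $(k/p)=1$, there are two roots $\pm\alpha$ of $a^2\equiv 4k \pmod p$. They are distinct and nonzero because $p$ is odd and $p\nmid k$. The identity $(k+1)^2-4k=(k-1)^2$ shows that $k+1$ is one of these roots exactly when $k\equiv 1\pmod p$. So the number of admissible residues $a \bmod p$ is $a_k$, with $a_k$ as in \eqref{eq:ak}.

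It therefore suffices to show that, for each fixed nonzero simple root $a$,
\[
\frac{1}{p^v}\sum_{\substack{u \bmod p^v\\ u\equiv a \bmod p}} Q_u(p)=\frac{p}{p^2-1}.
\]

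\textbf{Step 2: organise the lifts by Hensel's lemma.} Because $a\not\equiv 0$ and $p$ is odd, the derivative $2u$ is a unit, so Hensel's lemma applies. For $1\le j\le v$, the number of $u \bmod p^v$ with $u\equiv a$ and $\nu_u\ge j$ is $p^{v-j}$. Consequently:
\begin{itemize}
\item exactly $p^{v-j}-p^{v-j-1}$ lifts have $\nu_u=j$, for $1\le j<v$;
\item exactly one lift has $\nu_u\ge v$; here the remark after Proposition~\ref{prop:Cr} lets us work with the class of $u$ modulo $n$.
\end{itemize}

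When $j$ is even, the second case of \eqref{Qr} involves $\psi_u(p)$. Write $u=u_0+p^jt$ with $u_0$ the lift that is a root modulo $p^{j+1}$. Then $(u^2-4k)/p^j \bmod p$ is an affine function of $t$ with unit slope $2u_0$. Hence, among the lifts with $\nu_u=j$, the unit $(u^2-4k)/p^j$ is equidistributed over the $p-1$ nonzero residues. Half of these have $\psi_u(p)=1$ and half have $\psi_u(p)=-1$, so
\[
\frac{1}{p-1}\sum_{w\ne 0}\frac{1}{p-\psi(w)}=\frac12\Bigl(\frac1{p-1}+\frac1{p+1}\Bigr)=\frac{p}{p^2-1}.
\]

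\textbf{Step 3: the closed-form sum.} Inserting these counts into \eqref{Qr}, the quantity $p^v$ times the target equals
\[
\sum_{1\le j<v}(p^{v-j}-p^{v-j-1})\biggl[\frac{p(p^{\lceil j/2\rceil}-1)}{p^{\lceil j/2\rceil}(p-1)}+\delta_{2\mid j}\,p^{1-j/2}\,\frac{p}{p^2-1}\biggr]
+\frac{p(p^{\lceil v/2\rceil}-1)}{p^{\lceil v/2\rceil}(p-1)}+\frac{p^{2+v-3\lceil v/2\rceil}}{p^2-1}.
\]
The task is to show that this equals $p^{v}\cdot p/(p^2-1)$. I would do this by splitting into $v$ even and $v$ odd and pairing $j=2i-1$ with $j=2i$, since both have $\lceil j/2\rceil=i$. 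Then $\sum_j(p^{v-j}-p^{v-j-1})$ telescopes, and the remaining geometric sums in $p^{-i}$ can be summed explicitly.

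The main obstacle is this final bookkeeping: the ceilings, the parity split, and checking that the boundary term from $\nu_u\ge v$ exactly cancels the tail of the geometric sums. I would first verify the identity numerically for $v=1$ and $v=2$. For $v=1$ only the boundary term survives, and it gives $1+p/(p^2-1)-1=p/(p^2-1)$, which matches. I would then carry out the general telescoping.
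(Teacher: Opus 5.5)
Your proposal is correct and follows essentially the paper's route. You restrict to the $a_k$ admissible roots, stratify their Hensel lifts by $t=v_p(u^2-4k)$ (with $\varphi(p^{v-t})$ lifts for $t<v$ and one lift with $t\ge v$), average $\psi_u(p)$ to get $p/(p^2-1)$, and sum the closed form from \eqref{Qr}; your handling of $\psi_u(p)$ through a root $u_0$ modulo $p^{j+1}$ is more careful than the paper's shortcut $u_0=-2$. The identity you leave as bookkeeping does hold; the paper checks it only for even $v$, and you can finish for all $v$ by induction: writing $T_v$ for your Step 3 expression, passing from $v$ to $v+1$ multiplies every $j<v$ term by $p$, and \eqref{Qr} shows the boundary terms satisfy $B_{v+1}+(p-1)g(v)=pB_v$ (with $g(v)$ your bracket at $j=v$), so $T_{v+1}=pT_v$ and $T_v=p^{v-1}T_1=p^{v+1}/(p^2-1)$, where $T_1=1+\frac{1}{p^2-1}$ (your displayed $v=1$ arithmetic is garbled, though the final value is right).
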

\begin{proof}
Write
\eqn{\label{Vtsum}
\cS_{\mathrm{root}} (p^v; k) 
= \sum_{1 \le t \le v} V_t (p),
}
where
\eqs{
V_t (p) = \su{u \mod p^{v}\\ v_p (u^2-4k)=t \\ p \nmid k+1-u} 
\pr{q \mid p \\ q \nmid  u-2 }  Q_u (q) .
}
For each $1\le t\le v$, residue classes $u$ counted by $V_t(p)$ are of the form $u = \pm u_0 + p^t w$, where $\pm u_0$ are the two roots of $u^2 -4k$ modulo $p$ and $w\in (\Z/p^{v-t}\Z)^\times $. Therefore,
\eqs{
V_t (p) = \su{w \mod p^{v-t}\\ p \nmid w\\ p \nmid k+1\mp u_0 } \pr{q \mid p \\ q \nmid \pm u_0-2 }  Q_{\pm u_0 + p^tw} (q).
}
\textbf{Case 1.} First, assume $k=1 \mod p$ and $t < v$. Clearly, $u_0\equiv \pm 2\pmod p$. In this case, $u_0=-2$ and
\[
0\not\equiv k+1-u_0 \equiv 4\pmod p,
\]
since $p$ is an odd prime. Therefore, by \eqref{Qr} we have
\bs{
V_t (p) 
&= \su{w \mod {p^{v-t}}\\ p\nmid w} 	
\Bigl( 
\frac{p(p^{\ceil{t/2}}-1)}
{p^{\ceil{t/2}} (p-1)} + \frac{\delta_{2 \mid t } p }{p^{t/2} (p - \psi_{-2 + p^t w} (p))} \Bigr),
\\	
&=	\varphi (p^{v-t}) \frac{p(p^{\ceil{t/2}}-1)}
{p^{\ceil{t/2}} (p-1)} + 
\frac{\delta_{2 \mid t } p }{p^{t/2}} \su{w \mod {p^{v-t}}\\ p\nmid w} 	\frac 1 {p - \bigl( \frac{ -4 w}{p} \bigr) }, 
}
where 
\[
\delta_{2\mid t}=\begin{cases}
    1 & \text{ if $2\mid t$},\\ 
    0 & \text{ if $2\nmid t$},
\end{cases}
\]
since
\eqs{
	\psi_{-2 + p^t w} (p) = \Bigl( \frac{ ( (-2 + p^t w)^2-4)/p^t} p \Bigr) = \Bigl( \frac{ -4 w}{p} \Bigr).
}
As $w$ ranges over the units modulo $p^{v-t}$, $\psi$ will take both values $\pm 1$ equally often, so we get 
\eqn{\label{Vtsmallervaluation}
V_t (p) = \varphi (p^{v-t}) \Bigl(\frac{p(p^{\ceil{t/2}}-1)}
{p^{\ceil{t/2}} (p-1)} + 
 \frac{\delta_{2 \mid t } p }{p^{t/2}} \frac p{p^2-1}\Bigr).
}
Using \eqref{Qr} again, we find
\eqn{\label{Vtequalvaluation}
V_v (p) = 
\frac{p(p^{\ceil{v/2}}-1)}{p^{\ceil{v/2}} (p-1)} + \frac{p^{2+v-3\ceil{v/2}}}{p^2-1}.
}
\textbf{Case 2.} Assume $k\not\equiv  1 \mod p$. Then, 
\[
0\not \equiv k+1\pm u_0\pmod p
\]
for $u_0$ satisfying $u_0^2\equiv 4k\pmod p$. Therefore, we shall get twice the same values we get when $k=1 \mod p$.

Inserting \eqref{Vtsmallervaluation} and \eqref{Vtequalvaluation} into \eqref{Vtsum}, we find that
\bs{
\cS_{\mathrm{root}} (p^v; k) 
&= a_k \sum_{1 \le t < v} \varphi (p^{v-t}) \Bigl(\frac{p(p^{\ceil{t/2}}-1)}
{p^{\ceil{t/2}} (p-1)} + 
\frac{\delta_{2 \mid t } p }{p^{t/2}} \frac p{p^2-1}\Bigr) 
\\
& \quad + a_k \Bigl(\frac{p(p^{\ceil{v/2}}-1)}{p^{\ceil{v/2}} (p-1)} + \frac{p^{2+v-3\ceil{v/2}}}{p^2-1}\Bigr). }

If $v=2m$, $\cS_{\mathrm{root}} (p^v; k)/a_k$ becomes 
\eqs{
\frac{p(p^m-1)} {p^m(p-1)} + \frac{p^{2-m}}{p^2-1}  +
\sum_{1 \le t < 2m} p^{2m-t}  \Bigl(\frac{p^{\ceil{t/2}}-1}
{p^{\ceil{t/2}} } + 
\frac{\delta_{2 \mid t }  }{p^{t/2}} \frac p{p  +1}\Bigr).}
Separating even and odd values of $t$, this simplifies to
\eqs{
\frac{p^m-1}{p^m} \Bigl( \frac p {p-1} + p  \Bigr)   + \frac{p^{2-m}}{p^2-1}
+ \sum_{1 \le t < m}  p^{2m-3t} \Bigl( p^t (p+1) +
\frac {p - (p+1)^2} {p+1}   
\Bigr)
}
\mult{=\frac{-p^{3-m}+p^3+p^2}{p^2-1} +  (p+1) \sum_{1 \le t < m}  p^{2m-2t}  - \frac{p^3-1}{p^2-1} \sum_{1 \le t < m}  p^{2m-3t}, \\
= \frac{-p^{3-m}+p^3+p^2 + (p+1) (p^{2m}- p^2)- (p^{2m}- p^{3-m})} {p^2 -1 } = \frac{p^{2m+1} } {p^2 -1 }.
}
A similar computation will give the same result when $v$ is odd. Thus, the claimed result follows.

\end{proof}

\begin{proposition}\label{prop:Sp-formula}
Assume that $p^v \| n$ and $(k/p)=1$. Then, 
\eqs{
\frac{\cS(p^v; k)}{p^{v}} = \begin{dcases}
    \dfrac{p^2-p-1}{p^2 - 1} & \text{if } k=1 \mod p,\\
    \dfrac{p-2}{p - 1} & \text{if } k\neq 1 \mod p.
\end{dcases}
}
\end{proposition}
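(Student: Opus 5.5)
The plan is to combine the two lemmas just proved. By definition, $\cS(p^v;k)=\cS_{\mathrm{reg}}(p^v;k)+\cS_{\mathrm{root}}(p^v;k)$. Lemma~\ref{lem:Sp-reg} evaluates the regular part and Lemma~\ref{lem:Sp-root} evaluates the root part. Adding them gives
\[
\frac{\cS(p^v;k)}{p^v}=\frac{p}{p+1}-\frac{a_k}{p-1}+\frac{a_k\,p}{p^2-1}.
\]
So the whole statement reduces to simplifying this expression in the two cases $a_k=1$ and $a_k=2$ of \eqref{eq:ak}.

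First, collect the two $a_k$-terms over the common denominator $p^2-1$:
\[
-\frac{a_k(p+1)}{p^2-1}+\frac{a_k\,p}{p^2-1}=-\frac{a_k}{p^2-1}.
\]
This yields the compact formula
\[
\frac{\cS(p^v;k)}{p^v}=\frac{p(p-1)-a_k}{p^2-1}.
\]

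Next, specialise to each case.

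\textbf{Case $k\equiv 1 \pmod p$.} Here $a_k=1$, and the numerator is $p^2-p-1$. This gives $(p^2-p-1)/(p^2-1)$, which is the first case of the proposition.

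\textbf{Case $k\not\equiv 1 \pmod p$.} Here $a_k=2$, and the numerator is $p^2-p-2=(p-2)(p+1)$. Cancelling the factor $p+1$ against $p^2-1$ leaves $(p-2)/(p-1)$, which is the second case.

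There is no real obstacle, since all the substantive work (the character-sum count in Lemma~\ref{lem:Sp-reg} and the Hensel-type lifting in Lemma~\ref{lem:Sp-root}) is already done. The only point needing care is that the same value of $a_k$ is used in both lemmas. In the regular part, $a_k$ enters through the removal of the residue $a\equiv k+1$ from $N_+$, which contributes exactly when $\eta_{k+1}(p)=1$. In the root part, $a_k$ counts the number of roots $\pm u_0$ of $u^2\equiv 4k$ that satisfy $p\nmid k+1-u$. I would verify that these two counts agree in each case: one root in the first case and two in the second.
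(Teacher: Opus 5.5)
Your proposal is correct and is exactly the paper's argument: add Lemmas~\ref{lem:Sp-reg} and~\ref{lem:Sp-root} to get $\frac{p}{p+1}-\frac{a_k}{p^2-1}$, then specialise to $a_k=1$ or $a_k=2$. Your closing consistency check on $a_k$ is right but not strictly needed, since both lemmas are already stated with the same $a_k$ from \eqref{eq:ak}.
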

\begin{proof}
Combining Lemmas \ref{lem:Sp-reg} and \ref{lem:Sp-root},  we obtain
\eqs{
\frac{\cS(p^v; k)}{p^{v}} = \frac p {p+1} -  \frac {a_k} {p-1}+a_k \frac{p}{p^2-1} = \frac{p}{p+1} - \frac{a_k}{p^2-1},
}
which leads to the desired result.
\end{proof}

Using this proposition, we immediately get the following upper bound, which was used in the proof of Proposition \ref{prop:asym4Flr}.

\begin{lemma}\label{lem:CSnkbound}
Recall the definition \eqref{eq:C}. We have 
\eqs{
\cC \cS (n; k) \ll (\log \log n)^2.
}
\end{lemma}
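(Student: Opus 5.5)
By Lemma \ref{lem:multi-Sp}, $\cS(n;k)$ is multiplicative in $n$, so I would write
\[
\cC\,\cS(n;k)=\frac{4}{3}\prod_{q\neq 2}\frac{q^2(q^2-2q-2)}{(q+1)(q-1)^3}\cdot\frac{n}{\varphi(n)}\prod_{p^v\| n}\frac{\cS(p^v;k)}{p^v}\pr{q\mid n\\ q\neq 2}\frac{(q-1)(q^2-1)}{q(q^2-2q-2)}.
\]
The plan is to bound each local factor using the closed forms already computed, and then to bound the finitely many resulting products over $q\mid n$ by standard Mertens-type estimates.

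First, the absolute product $\prod_{q\neq2} q^2(q^2-2q-2)/((q+1)(q-1)^3)$ converges, because its factors are $1+O(q^{-2})$. So it contributes $O(1)$. The factor at $2$ is harmless: if $2\mid n$ then $\cS(2^{v};k)/2^{v}=1/2$.

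Second, for an odd prime $p\mid n$, Propositions \ref{prop:nonresidueSpn} and \ref{prop:Sp-formula} give $\cS(p^v;k)/p^v\le 1$. (The case $p\mid k$ cannot occur since $(n,k)=1$.) Hence $\prod_{p^v\|n}\cS(p^v;k)/p^v\le 1$.

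Third, the remaining product over $q\mid n$ has factors
\[
\frac{(q-1)(q^2-1)}{q(q^2-2q-2)}=1+\frac{q+1}{q(q^2-2q-2)}\cdot q\, \frac{1}{q}\cdots,
\]
which I would simply check are $1+O(1/q)$ for $q\ge3$. For $q=3$ the denominator is $q^2-2q-2=1$, so that factor equals $16/3$, a bounded constant. Therefore this product is at most $\prod_{q\mid n}(1+c/q)\ll (n/\varphi(n))^{c}$, with $c$ a small absolute constant.

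Finally, combine this with the remaining $n/\varphi(n)\ll\log\log n$. Expanding gives $(q-1)(q^2-1)-q(q^2-2q-2)=q^2+q+1$, so the factor is exactly $1+\frac{q^2+q+1}{q(q^2-2q-2)}=1+\frac{1}{q}+O(q^{-2})$. This product is therefore $\ll \prod_{q\mid n}(1-1/q)^{-1}=n/\varphi(n)\ll\log\log n$ as well. Together these give $\cC\,\cS(n;k)\ll(\log\log n)^2$.

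The only real care needed is this last step: pinning down that the exponent in the third step is exactly $1/q$ to first order. If it were $c/q$ with $c>1$, the bound would worsen to $(\log\log n)^{1+c}$. The explicit expansion above shows $c=1$.
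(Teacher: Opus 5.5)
Your argument is correct and is essentially the paper's: the paper also uses multiplicativity and bounds each local factor $\cS(p^v;k)/p^v$ by $1$ via Propositions \ref{prop:nonresidueSpn} and \ref{prop:Sp-formula}. It then merges $q/(q-1)$ with the $\cC$-factor into $\frac{q^2-1}{q^2-2q-2}=1+\frac{2q+1}{q^2-2q-2}=1+\frac{2}{q}+O(q^{-2})$, whereas you keep $n/\varphi(n)$ and $\prod_{q\mid n}\bigl(1+\frac{q^2+q+1}{q(q^2-2q-2)}\bigr)$ as two separate products, each $\ll\log\log n$. You should delete your garbled ``third step'', since a uniform bound $1+c/q$ with small $c$ already fails at $q=3$, where the factor is $16/3$; go straight to the exact identity $(q-1)(q^2-1)-q(q^2-2q-2)=q^2+q+1$.
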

\begin{proof}
This follows immediately by the definition \eqref{eq:C}, Lemma \ref{lem:multi-Sp}, Proposition \ref{prop:nonresidueSpn}, Proposition \ref{prop:Sp-formula}, and the fact that
\eqs{
\frac{2q+1}{q^2 - 2q -2} = \frac 2 q + O (q^{-2}). 
}
\end{proof}

We are now ready to state and prove the main result of this section. 
\begin{proposition} \label{prop:Euler-product}
The constant $\ckn$ given in \eqref{cnksimpler} is equal to $2C^{\Prime}_{n,k}$, where $C^{\Prime}_{n,k}$ is given in \eqref{eq:koblitzconstant}.
\end{proposition}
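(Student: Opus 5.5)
The plan is to compare the two Euler products prime by prime. We start from the simplified expression \eqref{cnksimpler} for $\ckn$ and substitute $\cS(n;k)/n=\prod_{p^v\| n}\cS(p^v;k)/p^v$, which holds by Lemma \ref{lem:multi-Sp}. The local values come from Proposition \ref{prop:nonresidueSpn} and Proposition \ref{prop:Sp-formula} for odd $p$, and from $\cS(2^{v};k)=2^{v-1}$ when $2\mid n$. Both $\ckn$ and $2C^{\Prime}_{n,k}$ then take the form $\frac{1}{\varphi(n)}\prod_q(\text{local factor at } q)$. It therefore suffices to check equality of the local factors in four classes: the prime $q=2$; odd primes $q\nmid n$; odd $q\mid n$ with $q\nmid k-1$; and odd $q\mid n$ with $q\mid k-1$.

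For odd $q\nmid n$, the factor in \eqref{cnksimpler} is $\frac{q(q^3-2q^2-q+3)}{(q+1)(q-1)^3}$. Expanding $(q-1)^3(q+1)=q^4-2q^3+2q-1$ gives
\[
1-\frac{q^2-q-1}{(q-1)^3(q+1)}=\frac{q^4-2q^3-q^2+3q}{(q-1)^3(q+1)},
\]
which is the same quantity.

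For odd $q\mid n$, the factor in $\ckn$ is $\frac{\cS(q^v;k)}{q^v}\cdot\frac{q}{q-1}$. There are two subcases.
\begin{itemize}
\item If $q\nmid k-1$, then either $(k/q)=-1$ (Proposition \ref{prop:nonresidueSpn}) or $(k/q)=1$ with $k\not\equiv1\pmod q$ (Proposition \ref{prop:Sp-formula}). In both cases the local value is $\frac{q-2}{q-1}$. This gives $\frac{q(q-2)}{(q-1)^2}=1-\frac{1}{(q-1)^2}$, as required.
\item If $q\mid k-1$, then $k\equiv1\pmod q$ forces $(k/q)=1$, and Proposition \ref{prop:Sp-formula} gives the local value $\frac{q^2-q-1}{q^2-1}$. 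Multiplying by $\frac{q}{q-1}$ yields $\frac{q(q^2-q-1)}{(q-1)^2(q+1)}$. This equals $1-\frac{1}{(q-1)^2(q+1)}$, because $(q-1)^2(q+1)-1=q^3-q^2-q$.
\end{itemize}

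The prime $2$ is treated by collecting the leftover constant $\frac{4(2,n)}{3}$ together with $\cS(2^v;k)/2^v$.
\begin{itemize}
\item If $2\nmid n$, this constant is $\frac43=2\cdot\frac23$. Here $\frac23=1-\frac{4-2-1}{1\cdot3}$ is the $q=2$ factor of the product over $q\nmid n$ in \eqref{eq:koblitzconstant}.
\item If $2\mid n$, then $k$ is odd, so $2\mid(n,k-1)$. The constant becomes $\frac{8}{3}\cdot\frac12=\frac43=2\bigl(1-\frac{1}{1\cdot3}\bigr)$, matching the first product in \eqref{eq:koblitzconstant}.
\end{itemize}
In both cases the factor $2$ from $q=2$ accounts for the factor $2$ in $\ckn=2C^{\Prime}_{n,k}$. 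Multiplying all local identities together completes the proof.

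The computation is entirely mechanical once the local factors are known. The only real care needed is bookkeeping. First, the product over $q\mid n$, $q\neq2$ of $\frac{q}{q-1}$ in \eqref{cnksimpler} must be attached to the correct primes. Second, the factor $(2,n)$ must be combined correctly with $\cS(2^v;k)=2^{v-1}$. Third, one must confirm that $q\mid(n,k-1)$ for odd $q$ is exactly the case $k\equiv1\pmod q$ in Proposition \ref{prop:Sp-formula}.
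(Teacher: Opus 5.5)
Your proposal is correct and follows the same route as the paper. The paper's proof only cites \eqref{cnksimpler}, Lemma~\ref{lem:multi-Sp}, Proposition~\ref{prop:nonresidueSpn} and Proposition~\ref{prop:Sp-formula}; you have written out the prime-by-prime matching it leaves implicit, including the bookkeeping at $q=2$ (using $\cS(2^v;k)=2^{v-1}$ and the factor $(2,n)$) that produces the factor $2$.
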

\begin{proof}
 The Euler product formula follows from \eqref{cnksimpler} together with Lemma \ref{lem:multi-Sp}, Proposition \ref{prop:nonresidueSpn} and Proposition \ref{prop:Sp-formula}.
\end{proof}

\begin{appendix}

\section{Proof of Lemma \ref{lem:average_estimate}} \label{app:lem-proof}


The following proof is a slightly less detailed version of the arguments described in \cite[Section 3.1]{BCD}.

\begin{proof}[Proof of Lemma \ref{lem:average_estimate}]
First, note that we can switch the sums: 
\mult{
\frac{1}{|\sF|} \su{E \in \sF} \pi_E^{\Prime}(x; k, n) \\
=  \frac 1 {|\sF|} \su{p \le x		\\ p \equiv k \mod n} 	\# \{ E_{a, b}  \in \sF : p \nmid \Delta_{E_{a, b}}, |E_{a, b} (\F_p)| \text{ prime} \}=\Sigma_1 + \Sigma_2,}
where
\bs{
\Sigma_1 &= \frac 1 {|\sF|} \su{p \le x \\ p \equiv k \mod n} \# \{ E_{a,b}  \in \sF : p \nmid ab\Delta_{E_{a, b}}, \ |E_{a, b} (\F_p)| \text{ prime} \}, \\
\Sigma_2 &= \frac 1 {|\sF|} \su{p \le x \\ p \equiv k \mod n} \# \{ E_{a,b}  \in \sF : p \mid ab, \ p \nmid \Delta_{E_{a, b}}, \ |E_{a, b} (\F_p)| \text{ prime} \}
.
}
For $n < (\log x)^C$, 
\bsc{\label{sigma2bound}
\Sigma_2 \ll & \frac{1}{|\sF|}\sum_{\substack{p\le x\\ p\equiv k\pmod n}}\sum_{\substack{|a|\le A, |b|\le B\\ ab\equiv 0\pmod p}} 1  \\
&\frac{1}{|\sF|}\su{p\le x\\ p\equiv k\pmod n}
\Bigl( 
    B \Bigl( \frac A p + O(1) \Bigr) + 
    A \Bigl( \frac B p + O(1) \Bigr)
\Bigr), \\
\ll & \frac{1}{\varphi(n)}\log \log x +  \frac{ (A+B)x  }{|\sF|\varphi(n) \log x}  \ll \frac x {\varphi(n) \log^3 x },
} 
provided that $A, B > x^\eps$. 

The rest of the curves $E \in \sF$ in $\Sigma_1$ will be counted according to their $\F_p$ isomorphism classes as follows. Assume $p\neq 2, 3$. Given $s,t \in \F_p$ and
$E_{s, t}$ defined over $\F_p$ and $ E_{a,b} \in \sF$, we have $E_{a,b} \pmod p \simeq E_{s,t}$  if and only if $a \equiv su^4 \mod p$ and $b \equiv tu^6 \mod p$ for some $u \in \F_p^\times$. Therefore, 
\eqs{
\Sigma_1= \frac 2 {|\sF|} \su{p \le x\\ p \equiv k \mod n} \frac 1 {p-1} \su{(s,t) \in \F_p^2\\ p \nmid st\Delta_{s,t}\\|E_{s,t} (\F_p)| \text{ prime}} \# \{ E \in \sF: E_p \simeq E_{s,t} \}
}
since $|\text{Aut} (E_{s,t})| = 2$ when $p \nmid st$. Using Dirichlet characters $\chi_0, \chi, \chi_1, \chi_2$  mod $p$ with $\chi_0$ being the principal character,  we can write 

\bsc{\label{eq:count-EC}
&\# \{ E \in \sF : E_p \simeq E_{s,t}  \} \\
&= \frac{1}{2} \sum_{u \in \F_p^\times} \sum_{|a| \le A} \sum_{|b| \le B} \biggl(\frac{1}{p-1} \sum_{\chi_1 \mod p} \chi_1(su^4) \overline{\chi}_1(a) \biggr) \\
    & \qquad \cdot \biggl(\frac{1}{p-1} \sum_{\chi_2 \mod p} \chi_2(tu^6) \overline{\chi}_2(b)\biggr), \\	
        & = \frac{1}{2(p-1)^2} \sum_{\chi_1, \chi_2 \mod p} \chi_1(s)\chi_2(t) \cA(\overline{\chi}_1) \cB(\overline{\chi}_2) \sum_{u \in \F_p^\times} (\chi_1^4 \chi_2^6) (u), \\
       & =  
	\frac 1 {2(p-1)} \sum_{\chi_1^4 \chi_2^6 = \chi_0} \chi_1(s)\chi_2(t) \cA(\overline{\chi}_1) \cB(\overline{\chi}_2),
}
where
\eqs{
	\cA(\chi) := \sum_{|a| \le A} \chi(a), \quad \cB(\chi) := \sum_{|b| \le B} \chi(b).
}
For $\chi_1 = \chi_2 = \chi_0$, we get the main term 
\bsc{\label{eq:main-term}
	\frac{1}{2(p-1)} \su{|a| \le A \\ p \nmid a} 1  \su{|b| \le B \\ p \nmid b} 1 
	& = \frac{|\sF|}{2(p-1)} + O  \Bigl(\frac{AB}{p^2} + \frac{A}{p} + \frac{B}{p}\Bigr),
}
while the contribution when exactly one of $\chi_1 = \chi_0$ or $\chi_2 = \chi_0$ holds is
\eqn{\label{eq:non-trivial-car}
	\ll \frac{A}{p} \su{\chi_2 \neq \chi_0 \\ \chi_2^6 = \chi_0} \abs{\cB(\chi_2)} + 
	\frac{B}{p} \su{\chi_1 \neq \chi_0 \\ \chi_1^4 = \chi_0} \abs{\cA(\chi_1)}.
}
These estimates are independent of $s$ and $t$. Using the expression \eqref{eq:count-EC} and the asymptotic bounds \eqref{eq:main-term} and \eqref{eq:non-trivial-car}, we find that 
\bs{&\Sigma_1 - 
\su{p \le x\\ p \equiv k \mod n} \frac 1 {(p-1)^2} \su{(s,t) \in \F_p^2\\ p \nmid st\Delta_{s,t}\\|E_{s,t} (\F_p)| \text{ prime}} 1 \\
& \ll 
	\frac 1 {|\sF|} \su{p \le x\\ p \equiv k \mod n} \frac 1 { (p-1)^2} 
	\Bigg|
	\su{\chi_1^4 \chi_2^6 = \chi_0 \\\chi_1, \chi_2 \neq \chi_0}  \cA(\overline{\chi}_1) \cB(\overline{\chi}_2)
	\su{(s,t) \in \F_p^2\\ p \nmid st\Delta_{s,t}\\|E_{s,t} (\F_p)| \text{ prime}} 
	\chi_1(s)\chi_2(t)
	\Bigg| \nonumber 
	\\
&	+ \frac 1 {|\sF|} \su{p \le x\\ p \equiv k \mod n}   
	\Bigl(\frac{AB}{p  } + A+ B + A \su{\chi_2 \neq \chi_0 \\ \chi_2^6 = \chi_0} \abs{\cB(\chi_2)} + 
	B \su{\chi_1 \neq \chi_0 \\ \chi_1^4 = \chi_0} \abs{\cA(\chi_1)} \Bigr).  }

Using triangle and H\"older inequalities, for any $l>0$, 
\bs{
&\su{p \le x	\\ p \equiv k \mod n} \su{\chi_2 \neq 1 \\ \chi_2^6 = 1} \abs{\cB(\chi_2)} 
\le 2 \su{p \le x	\\ p \equiv k \mod n} \su{\chi_2 \neq 1 \\ \chi_2^6 = 1} \bigg|\sum_{b \le B} \chi_2 (b)\bigg| \\
& \le 2 
\biggl(\su{p \le x	\\ p \equiv k \mod n} \su{\chi_2 \neq 1 \\ \chi_2^6 = 1} 1 \biggr)^{1-\frac 1{2l}}
\biggl( \su{p \le x	\\ p \equiv k \mod n} \su{\chi_2 \neq 1 \\ \chi_2^6 = 1} \bigg|\sum_{b \le B} \chi_2 (b) \bigg|^{2l} 
\biggr)^{\frac 1 {2l}}, \\
&\ll_l 
\Bigl(\frac{x}{\varphi(n)\log  x}\Bigr)^{1-\frac 1{2l}}
\biggl(\su{p \le x	\\ p \equiv k \mod n} \su{\chi_2 \neq 1 \\ \chi_2^6 = 1} \bigg|\sum_{b \le B^l} \tau_l (b) \chi_2 (b) \bigg|^2 
\biggr)^{\frac 1 {2l}},
}
where $\tau_l(b)=\sum_{\substack{b_1\ldots b_l=b\\ 1\le b_1, \ldots, b_l\le B}}1 $.

By Gallagher's large sieve inequality, which states that
\eqs{
	\sum_{q \le X} \frac{q}{\varphi(q)} \sideset{}{^*}\sum_{\chi} \left| \sum_{n = M+1}^{M+N} a_n \chi(n) \right|^2 
	\ll (X^2 + N) \cdot \sum_{n = M+1}^{M+N} |a_n|^2,
}
where $\sum^*$ indicates that the sum runs over primitive characters, 
we get
\mult{
\su{p \le x	\\ p \equiv k \mod n} \su{\chi_2 \neq 1 \\ \chi_2^6 = 1} \bigg|\sum_{b \le B} \chi_2 (b) \bigg|^{2l} 
\le \sum_{q \le x} \frac{q}{\varphi(q)} \sideset{}{^*}\sum_{\chi \mod q} \left| \sum_{b \le B^{l}} \tau_l (b) \chi(b) \right|^2 
\\
\ll (x^2 + B^l) \sum_{b \le B^l} \tau_l (b)^2 \ll (x^2 + B^l) B^l (\log B^l)^{l^2 -1}.
}
We can assume $B \le x$ since $\chi_2$ is periodic with period $p \le x$. Therefore,
\multn{\label{sigma1error1}
	A\su{p \le x	\\ p \equiv k \mod n} \su{\chi_2 \neq \chi_0 \\ \chi_2^6 = \chi_0} \abs{\cB(\chi_2)} 
	\\
\ll_l 
	A\Bigl(\frac{x}{\varphi(n)\log (x/n)}\Bigr)^{1-\frac 1{2l}} (x^{1/l} B^{1/2} + B) ( \log x)^{l/2 -1/2l}.
}
A similar bound with $A$ and $B$ swapped holds for 
\eqs{
	B \su{p \le x	\\ p \equiv k \mod n} \su{\chi_1 \neq \chi_0 \\ \chi_1^4 = \chi_0} \abs{\cA(\chi_1)}. 
}
Next, we apply H\"older's inequality to get

\mult{
	\Bigg|\su{\chi_1^4 \chi_2^6 = \chi_0 \\ \chi_1, \chi_2 \neq \chi_0} \cA(\overline{\chi}_1) \cB(\overline{\chi}_2) \su{(s,t) \in \F_p^2\\ p \nmid st\Delta_{s,t}\\|E_{s,t}(\F_p)| \text{ prime}} \chi_1(s)\chi_2(t)  \Bigg|^4 \\
	\hspace*{-1cm} \le 
	\su{\chi_1^4 \chi_2^6 = \chi_0 \\ \chi_1, \chi_2 \neq \chi_0} \big|\cA(\overline{\chi}_1)\big|^4 
	\su{\chi_1^4 \chi_2^6 = \chi_0 \\ \chi_1, \chi_2 \neq \chi_0}  \big|\cB(\overline{\chi}_2)\big|^4 
	\Biggl(
	\sum_{\chi_1, \chi_2 }  \bigg|\su{(s,t) \in \F_p^2\\ p \nmid st\Delta_{s,t}\\|E_{s,t}(\F_p)|  \text{ prime}} \chi_1(s)\chi_2(t)  \bigg|^2 \Biggr)^2,
}
where the conditions on $\chi_i$ are dropped on the last sum, which then becomes
\eqn{\label{eq:large-sieve}
	\su{\chi_1, \chi_2} \bigg| \su{(s,t) \in \F_p^2\\ p \nmid st\Delta_{s,t}\\|E_{s,t}(\F_p)| \text{ prime}} \chi_1(s)\chi_2(t)  \bigg|^2 = \su{(s,t) \in \F_p^2\\ p \nmid st\Delta_{s,t}\\|E_{s,t}(\F_p)| \text{ prime}} (p-1)^2 \ll (p-1)^4.
}
Furthermore, using \cite[Lemma 3]{FriIwa1985}, we have that
\eqn{\label{eq:FriIwa}
	\su{\chi_1^4 \chi_2^6 = 1 \\ \chi_1, \chi_2 \neq 1} \big|\cA(\overline{\chi}_1)\big|^4 \su{\chi_1^4 \chi_2^6 = 1 \\ \chi_1, \chi_2 \neq 1} \big|\cB(\overline{\chi}_2)\big|^4
	\ll (ABp)^2  \log^{12} p.
}
Combining \eqref{eq:large-sieve} and \eqref{eq:FriIwa}, we conclude that 
\bsc{\label{sigma1error2}
	\frac 1 {|\sF|} 
	&\su{p \le x	\\ p \equiv k \mod n} \su{(s,t) \in \F_p^2\\ p \nmid st\Delta_{s,t}\\|E_{s,t}(\F_p)| \text{ prime}}
	\frac 1 {(p-1)^2} \su{\chi_1^4 \chi_2^6 = \chi_0 \\ \chi_1, \chi_2 \neq \chi_0} \chi_1(s)\chi_2(t) \cA(\overline{\chi}_1) \cB(\overline{\chi}_2) \\
	&\ll \frac 1 {|\sF|}
	\su{p \le x	\\ p \equiv k \mod n} (ABp)^{1/2}  \log^3 p \ll \frac {\sqrt{AB}x^{3/2} \log^3 x } {\varphi(n)|\sF| \log (x/n)}.
}
By combining \eqref{sigma2bound}, \eqref{sigma1error1}, \eqref{sigma1error2}, and assuming $n\ll (\log x)^C$ and $A, B > x^\eps$, we conclude  that

\eqs{
\frac{1}{|\sF|} \su{E \in \sF} \pi_E^{\Prime}(x; k, n) - \su{p \le x\\ p \equiv k \mod n} \frac {\nu(p; \cP)}{(p-1)^2} 
}

\mult{
\ll_l \frac 1 {|\sF|} \biggl( \Bigl(\frac{x}{\varphi(n) \log(x)}\Bigr)^{1-\frac 1{2l}} ( \log x)^{\frac l 2-\frac 1 {2l}} 
\bigl(AB + x^{1/l}(A B^{1/2} + B A^{1/2}) \bigr)
\\
+ \frac {\sqrt{AB}x^{3/2} \log^3 x } {\varphi(n) \log (x)}
	\biggr) 
+ \frac x {\varphi(n) \log^3 x } ,
}
holds for sufficiently large $x$ and any $l \ge 1$, 
where the contribution from 
\[
\su{p \le x\\ p \equiv k \mod n} \frac 1 {(p-1)^2} \su{(s,t) \in \F_p^2\\ p \nmid \Delta_{s,t},\;  p\mid st \\|E_{s,t} (\F_p)| \text{ prime}} 1\ll \su{p \le x\\ p \equiv k \mod n} \frac 1 {p}\ll \frac{1}{\varphi(n)}\log\log x,
\]
is negligible. Choosing $AB > x \log^{10} x$, $l \le 3/\eps$
and then taking $x$ sufficiently large depending on $\eps$ and $C$ gives the desired bound
\eqs{O \Bigl(\frac{x}{\varphi(n) \log^3 x} \Bigr). }
\end{proof}
\end{appendix}

\bibliographystyle{amsplain}
\bibliography{biblio}

\end{document}